\newtheorem{thm}{Theorem}
\newtheorem{lem}{Lemma}
\newtheorem{cor}{Corollary}
\newtheorem{prop}{Proposition}
\newtheorem{rem}{Remark}
\newtheorem{defi}{Definition}
\newcommand{\eps}{\varepsilon}
\newcommand{\R}{\mathbb{R}}
\newcommand{\N}{\mathbb{N}}
\newcommand{\Z}{\mathbb{Z}}
\renewcommand{\div}{{\rm div}\,}
\newcommand{\Id}{{\rm Id}\,}
\newcommand{\Sum}{\displaystyle \sum}
\newcommand{\Int}{\displaystyle \int}
\newcommand{\mbb}{\mathbb}                                      
\newcommand{\mc}{\mathcal}					
\newcommand{\veps}{\varepsilon}					
\newcommand{\wtilde}{\widetilde}				
\def\ov{\overline}
\def\d{\partial}
\def\dj{\Delta_j}
\def\tilde{\widetilde}
\def\hat{\widehat}
\def\div{{\rm div}\,}
\def\s{\sigma}
\def\cA{{\mathcal A}}
\def\cC{{\mathcal C}}
\def\cF{{\mathcal F}}
\def\cP{{\mathcal P}}
\def\cQ{{\mathcal Q}}
\def\cS{{\mathcal S}}
\def\da{\delta\!a}
\def\du{\delta\!u}
\begin{document}
\title[Density-dependent incompressible Euler equations]
{THE WELL-POSEDNESS ISSUE FOR THE  DENSITY-DEPENDENT
EULER EQUATIONS IN ENDPOINT BESOV SPACES}

\author[R. Danchin]{Rapha\"el Danchin}
\address[R. Danchin]
{Universit\'e Paris-Est, LAMA, UMR 8050,
 61 avenue du G\'en\'eral de Gaulle,
94010 Cr\'eteil Cedex, France.}
\email{danchin@univ-paris12.fr}

\author[F. Fanelli]{Francesco Fanelli}
\address[F. Fanelli]
{SISSA,
via Bonomea 265,
34136 Trieste, Italy.}
\email{francesco.fanelli@sissa.it}

\date\today
\begin{abstract}
This work  is the continuation of the recent paper \cite{D2}
devoted to the density-dependent incompressible Euler equations. 
Here we concentrate on the well-posedness issue in Besov spaces
of type $B^s_{\infty,r}$ embedded in the set of Lipschitz continuous functions, 
a functional framework which contains the particular case of H\"older spaces
and of the endpoint Besov space $B^1_{\infty,1}.$ 
For such data and under the nonvacuum assumption, we establish 
the local well-posedness and a continuation criterion in the spirit of
that of Beale, Kato and Majda in \cite{BKM}.
 
In the last part of the paper, we give lower bounds for the lifespan of a solution. 
In dimension two, we point out that the lifespan tends to infinity 
when the initial density tends to be a constant.
This is, to our knowledge, the first result of this kind for the
density-dependent incompressible Euler equations. 
  \end{abstract}

\maketitle

\section{Introduction and main results}

This work is the continuation of a recent paper by the first author (see \cite{D2})
devoted to the 
\emph{density-dependent incompressible Euler equations}:
\begin{equation}\label{eq:ddeuler}
\left\{\begin{array}{l}
\d_t\rho+u\cdot\nabla \rho=0,\\[1ex]
\rho(\d_tu+u\cdot\nabla u)+\nabla\Pi=\rho f,\\[1ex]
\div u=0.
\end{array}\right.
\end{equation}

Recall that the above equations describe the evolution of  the density $\rho=\rho(t,x)\in\R_+$
and of the velocity field  $u=u(t,x)\in\R^N$ of 
a nonhomogeneous inviscid incompressible fluid. 
The time dependent vector field  $f$ stands for a given body force and the gradient of the  pressure $\nabla\Pi$
is the Lagrangian multiplier associated to the divergence free constraint over the velocity. 
We assume that the space variable $x$  belongs to the whole $\R^N$ with $N\geq2$. 
\smallbreak
There is an important  literature devoted to the  standard incompressible Euler equations, 
that is to the case where the initial density is a positive constant, an assumption which is preserved during the evolution. 
In contrast, not so many works have been devoted to the study of \eqref{eq:ddeuler}
in the nonconstant density case.  In the situation  
where the  equations are considered in  a suitably smooth bounded 
 domain of $\R^2$ or $\R^3,$ the local
well-posedness issue has been investigated by H. Beir\~ao da Veiga and A. Valli
 in \cite{BV1,BV2,BV3} for data 
 with high enough  H\"older regularity.
 In  \cite{D1}, we have proved well-posedness in  $H^s$ with $s>1+N/2$ and have studied the inviscid limit
 in this framework.  Data in the limit Besov space $B^{\frac N2+1}_{2,1}$
 were  also considered.  
  
 As for the standard incompressible Euler equations, 
 any functional space   embedded in 
the set $C^{0,1}$ of bounded globally Lipschitz functions
is a candidate  for the study of the well-posedness issue. 
This stems from  the fact that System \eqref{eq:ddeuler}  is a  coupling between
  transport equations. 
Hence preserving the initial regularity requires the velocity field to be at least locally Lipschitz with respect to the space variable.
As a matter of fact, 
the classical Euler equations have been shown to be well posed in 
any Besov space $B^s_{p,r}$  embedded in $C^{0,1}$ 
(see \cite{BCD,Ch,PP,Z} and the references therein),
a property which holds if and only if  
 $(s,p,r)\in\mbb{R}\times[1,+\infty]^2$ satisfies
$$
s>1+\frac{N}{p}\qquad\qquad\mbox{or}\qquad\qquad s=1+\frac{N}{p}\;\;\mbox{and}\;\;r=1\,. \leqno(C)
$$

In \cite{D2}, we extended the results of the homogeneous case to \eqref{eq:ddeuler}
(see also \cite{FXZ} for a similar study in the periodic framework).
Under condition $(C)$ \emph{with $1<p<\infty$} we established the local well-posedness for any data $(\rho_0,u_0)$ in $B^s_{p,r}$
 such that  $\rho_0$ is bounded away from zero.  
 However, we have been unable to treat the limit case $p=\infty$
 \emph{unless the initial density is a small perturbation of a constant density state},
  a technical artifact due to the method
 we used to handle the pressure term. 
 
 In fact,  in contrast to the classical Euler equations, computing the gradient of the pressure
involves 
an  elliptic equation \emph{with nonconstant coefficients}, namely
 \begin{equation}\label{eq:ell}
 \div\bigl(a\nabla\Pi\bigr)=\div F\quad\hbox{with }\ F:=\div(f-u\cdot\nabla u)\ \hbox{ and }\
 a:=1/\rho.
 \end{equation}
 
 Getting  appropriate a priori estimates
 \emph{given that we expect the function $\rho$ to have exactly the same regularity
 as $\nabla\Pi$} is the main difficulty. 
In the $L^2$ framework and, more generally, in the Sobolev 
framework $H^s,$ this may be achieved by means of a classical energy method.
This is also quite straightforward in the $B^s_{p,r}$ framework  if 
 $a$  is  a small perturbation of  some positive constant function $\ov a,$ for
 the above equation may be rewritten 
 $$
 \ov a\Delta\Pi= 
 \div F+\div\bigl((\ov a-a)\nabla\Pi\bigr).
 $$
 
 Now, if $a-\ov a$ is small enough then one may take advantage of regularity results for the Laplace 
 operator in order to ``absorb'' the last term. 
 
 If $1<p<\infty$ and $a$ is bounded away from zero then
 it turns out that combining energy arguments similar to those of the $H^s$ case
 and a harmonic analysis lemma allows to handle the elliptic equation \eqref{eq:ell}.
 This is  the approach that we used in \cite{D2}. 
 However it fails for the limit cases $p=1$ and $p=\infty.$
  
 \medbreak
 In the present work, we propose another  method for proving  a priori estimates for 
 \eqref{eq:ell}.
 In addition to being  simpler, this will enable us to treat
 all the cases $p\in[1,\infty]$ indistinctly whenever the density is bounded away 
 from zero. 
  Our approach relies on the fact that the pressure $\Pi$ satisfies (here we take $f\equiv0$
  to simplify)
  \begin{equation}\label{eq:pressure}
  \Delta\Pi=-\rho\,\div(u\cdot\nabla u)+\nabla\log\rho\cdot\nabla\Pi.
 \end{equation}
 Obviously, the last term is of lower order. 
 In addition, the classical $L^2$ theory ensures
 that 
 $$
 a_*\|\nabla\Pi\|_{L^2}\leq \|u\cdot\nabla u\|_{L^2}\quad\hbox{with }\ 
 a_*:=\inf_{x\in\R^N} a(x).
 $$
 Therefore interpolating between the high regularity estimates for the Laplace operator
 and the $L^2$ estimate allows to absorb the last term in 
 the right-hand side of \eqref{eq:pressure}.
 \smallbreak
 In the rest of the paper, we focus on the case $p=\infty$
 as it is the only definitely new one and 
 as it covers both  H\"older spaces with 
 exponent greater than $1$
 and the limit space $B^1_{\infty,1}$ which is the largest one in 
 which one may expect to get well-posedness.

 \medbreak  
 
 Before going further into the description of our results, let us introduce a few notation.
 \begin{itemize}
\item Throughout the paper, $C$ stands for
a harmless ``constant'' the meaning of which depends on the context.
\item
If $a=(a^1,a^2)$ and $b=(b^1,b^2)$ then we denote
$
a\wedge b:=a^1b^2-a^2b^1.
$
\item 
The vorticity $\Omega$ associated to a vector field $u$ over $\R^N$ is the matrix valued
function with entries
$$
\Omega_{ij}:=\d_j u^i-\d_iu^j.
$$
If $N=2$ then the vorticity may be identified with the scalar function
$\omega:=\d_1u^2-\d_2u^1$ and if $N=3,$ with the vector field $\nabla\times u.$
\item For all  Banach space $X$ and interval $I$ of $\R,$  
we denote by $\cC(I;X)$ (resp. $\cC_b(I;X)$)
 the set of continuous  (resp. continuous bounded) functions on $I$ with
values in $X.$
If $X$ has predual $X^*$ then 
we denote by $\cC_w(I;X)$ the set of bounded measurable functions 
$f:I\rightarrow X$ such that for any $\phi\in X^*,$ the 
function $t\mapsto\langle f(t),\phi\rangle_{X\times X^*}$
is continuous over~$I.$
\item For $p\in[1,\infty]$, the notation $L^p(I;X)$ 
stands for the set
 of measurable functions on  $I$ with values in $X$ such that
$t\mapsto \|f(t)\|_X$ belongs to $L^p(I)$.
In the case $I=[0,T]$ we alternately use the notation 
$L_T^p(X).$

\item We denote by $L^p_{loc}(I)$ the set of those functions defined on $I$ and valued in $X$
which, restricted to any compact subset $J$ of $I,$ are in $L^p(J).$

\item Finally, for any real valued function $a$ over $\R^N,$ we 
denote
$$
a_*:=\inf_{x\in\R^N}a(x)\quad\hbox{and}\quad
a^*:=\sup_{x\in\R^N}a(x).
$$
\end{itemize}
\medbreak

Let us now state our main well-posedness result
in the  case of a finite energy initial velocity field.
\begin{thm} \label{th:L^2}
Let $r$ be in $[1,\infty]$ and $s\in\R$ satisfy $s>1$ if $r\not=1$
and $s\geq1$ if $r=1.$ 
Let $\rho_0$ be a positive function in $B^{s}_{\infty,r}$ bounded away from $0,$
 and $u_0$ be a divergence-free vector field
with coefficients in $B^{s}_{\infty,r}\cap L^2$.
Finally, suppose that the external force $f$ has coefficients in $L^1([-T_0,T_0];B^{s}_{\infty,r})\cap\cC([-T_0,T_0];L^2)$ for some positive
time $T_0$.

Then there exists a time $T\in]0,T_0]$ such that System \eqref{eq:ddeuler} with initial data $(\rho_0,u_0)$ has a unique solution
$(\rho,u,\nabla\Pi)$ on $[-T,T]\times\mbb{R}^N$, with:
\begin{itemize}
\item $\rho$ in $\cC([-T,T];B^s_{\infty,r})$ and bounded away from $0,$
\item $u$ in $\cC([-T,T];B^s_{\infty,r})\cap \cC^1([-T,T];L^2)$ and
\item $\nabla\Pi$ in $L^1([-T,T];B^s_{\infty,r})\cap \cC([-T,T];L^2)$.
\end{itemize}
If $r=\infty$ then one has only weak continuity in time with values in the Besov space $B^s_{\infty,\infty}$.
\end{thm}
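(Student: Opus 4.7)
The plan is the standard four-step scheme for quasilinear hyperbolic well-posedness, adapted to the endpoint Besov framework: (i) regularize the data and solve an approximate system, (ii) derive uniform \emph{a priori} bounds in $B^s_{\infty,r}$ on some common interval $[-T,T]$, (iii) pass to the limit, (iv) establish uniqueness and time continuity.

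For step (i), I would set $(\rho_0^n,u_0^n,f^n):=(S_n\rho_0,S_n u_0,S_n f)$ with $S_n$ a Littlewood--Paley low-frequency cutoff, and construct smooth solutions either by a Friedrichs-type truncation in frequency or by an iterative scheme in which, at step $k+1$, one transports $\rho^{k+1}$ by $u^k$, solves the elliptic equation $\div(\rho_{k+1}^{-1}\nabla\Pi^{k+1})=\div(f^n-u^k\cdot\nabla u^k)$, and then updates $u^{k+1}$ via the momentum equation. Smoothness of the regularized data ensures existence of these auxiliary solutions on a time interval depending on $n$.

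The main obstacle is step (ii), and specifically the pressure estimate in $B^s_{\infty,r}$ with $p=\infty$, which is precisely the case left open in \cite{D2}. Following the strategy announced around \eqref{eq:pressure}, I would rewrite the elliptic equation as
$$
\Delta\Pi=-\rho\,\div(u\cdot\nabla u)+\nabla\log\rho\cdot\nabla\Pi+\div(\rho f)
$$
and combine two ingredients. On the one hand, an $L^2$ companion bound: testing $\div(a\nabla\Pi)=\div F$ against $\Pi$ gives $a_*\|\nabla\Pi\|_{L^2}\leq\|\rho\,u\cdot\nabla u\|_{L^2}+\|\rho f\|_{L^2}$, which crucially uses $u_0\in L^2$ together with the classical $L^2$ energy identity for the momentum equation. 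On the other hand, paraproduct and remainder decompositions of $\rho\,\div(u\cdot\nabla u)$ and $\nabla\log\rho\cdot\nabla\Pi$, combined with the smoothing properties of $\Delta^{-1}$, yield a high-regularity bound whose only problematic contribution comes from $\nabla\log\rho\cdot\nabla\Pi$ and takes the form of a lower-order norm of $\nabla\Pi$ multiplied by some power of $\|\rho\|_{B^s_{\infty,r}}$. Interpolating that lower-order norm between the $L^2$ bound above and $\|\nabla\Pi\|_{B^s_{\infty,r}}$ allows, via Young's inequality, to absorb the offending term into the left-hand side. Coupled with the standard transport estimates in $B^s_{\infty,r}$ for $\rho$ and $u$, a Gronwall argument then closes the \emph{a priori} bounds on a time interval $[-T,T]$ depending only on the norms of the data.

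Steps (iii) and (iv) are essentially routine. The differences of two consecutive approximate solutions (and, later, of two candidate solutions with the same data) are estimated in the energy space $L^\infty_T(L^2)$ by means of a stability version of the pressure estimate above, yielding both Cauchy convergence of the scheme and uniqueness of the limit. Strong continuity in time with values in $B^s_{\infty,r}$ when $r<\infty$ follows from classical results on the transport equation, since $\rho$ and $u$ solve transport equations with Lipschitz drift and right-hand side in $L^1_T(B^s_{\infty,r})$; in the endpoint case $r=\infty$ one obtains only weak continuity, by interpolating between continuity in a weaker topology and boundedness in $L^\infty_T(B^s_{\infty,\infty})$.
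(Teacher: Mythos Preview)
Your proposal is correct and follows essentially the same route as the paper: the same reformulation \eqref{eq:pressure} of the pressure equation, the same combination of the $L^2$ bound from Lemma~\ref{l:laxmilgram} with a $B^{s-1}_{\infty,r}$ estimate for $\Delta\Pi$, the same interpolation-plus-Young argument to absorb $\nabla\log\rho\cdot\nabla\Pi$, and the same $L^2$-based Cauchy argument for convergence and uniqueness. The only (harmless) discrepancies are the precise linearization of the iterative scheme and a couple of algebraic slips (e.g.\ the $L^2$ energy bound reads $a_*\|\nabla\Pi\|_{L^2}\le\|u\cdot\nabla u\|_{L^2}+\|f\|_{L^2}$, without the factor $\rho$, and the source term in your Poisson identity should be $\rho\,\div f$ rather than $\div(\rho f)$); also, in the endpoint case $s=r=1$ the space $B^0_{\infty,1}$ is not an algebra, so the paper treats $\nabla\log a\cdot\nabla\Pi$ via a separate Bony decomposition argument that you should flag explicitly.
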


\medbreak
In the above functional framework, one may state a continuation criterion
for the solution to \eqref{eq:ddeuler} similar to that of Theorem 2 of  \cite{D2}:

\begin{thm}\label{th:cont}
 Let $(\rho,u,\nabla\Pi)$ be a solution to System \eqref{eq:ddeuler} on $[0,T^*[\times\mbb{R}^N,$ 
 with the properties described in Theorem \ref{th:L^2} for all $T<T^*$; suppose also that we have 
\begin{equation} \label{eq:cont_cond}
 \int^{T^*}_0\left(\|\nabla u\|_{L^\infty}\,+\,\|\nabla\Pi\|_{B^{s-1}_{\infty,r}}\right)dt\,<\,\infty\,.
\end{equation}

If $T^*$ is finite then $(\rho,u,\nabla\Pi)$ can be continued beyond $T^*$ into a solution of \eqref{eq:ddeuler} with the same regularity.
Moreover, if $s>1$ then one may replace in \eqref{eq:cont_cond} the term $\|\nabla u\|_{L^\infty}$ with $\|\Omega\|_{L^\infty}.$

A similar result holds for negative times.
\end{thm}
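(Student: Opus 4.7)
The plan follows the standard bootstrap strategy for continuation criteria. Assume that $M:=\int_0^{T^*}\bigl(\|\nabla u\|_{L^\infty}+\|\nabla\Pi\|_{B^{s-1}_{\infty,r}}\bigr)\,dt<\infty$. I would show that the higher-order norms $\|\rho\|_{L^\infty_T B^s_{\infty,r}}$, $\|u\|_{L^\infty_T B^s_{\infty,r}}$ and $\|\nabla\Pi\|_{L^1_T B^s_{\infty,r}}$ stay bounded uniformly as $T\nearrow T^*$, and that $\rho$ stays uniformly away from zero. Granted these bounds, I would pick $T<T^*$ so close to $T^*$ that the local existence time furnished by Theorem~\ref{th:L^2} applied to $(\rho(T),u(T))$ exceeds $T^*-T$; the resulting solution glues with the original one by uniqueness and provides the sought extension.

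To establish the a priori bounds, I would first treat the density by the standard transport estimate in $B^s_{\infty,r}$ for a divergence-free Lipschitz velocity, obtaining
$$\|\rho(t)\|_{B^s_{\infty,r}}\leq \|\rho_0\|_{B^s_{\infty,r}}\exp\Bigl(C\int_0^t\|\nabla u\|_{L^\infty}\,d\tau\Bigr),$$
together with the preservation of $\rho_*$ and $\rho^*$. The velocity, satisfying $\d_t u+u\cdot\nabla u=-a\nabla\Pi+f$ with $a=1/\rho,$ obeys a similar estimate with an additional forcing term $\int_0^t(\|a\nabla\Pi\|_{B^s_{\infty,r}}+\|f\|_{B^s_{\infty,r}})\,d\tau$, handled via Bony paraproducts and the bound on $a$. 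The decisive step is then to control $\|\nabla\Pi\|_{B^s_{\infty,r}}$ by reusing the elliptic estimate built in the paper starting from \eqref{eq:pressure} via the low/high frequency interpolation sketched in the introduction. This should produce an inequality of the form
$$\|\nabla\Pi(t)\|_{B^s_{\infty,r}}\leq \Phi\bigl(\|\rho(t)\|_{B^s_{\infty,r}},1/\rho_*\bigr)\Bigl(\|\nabla u\|_{L^\infty}\|u\|_{B^s_{\infty,r}}+\|\nabla\Pi\|_{B^{s-1}_{\infty,r}}+\|f\|_{B^s_{\infty,r}}+\text{lower order}\Bigr),$$
in which the term $\|\nabla\Pi\|_{B^{s-1}_{\infty,r}}$ is exactly what hypothesis \eqref{eq:cont_cond} makes integrable. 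Inserting this into the transport estimate for $u$ and applying Gronwall's lemma to $X(t):=\|u(t)\|_{B^s_{\infty,r}}+\|\rho(t)\|_{B^s_{\infty,r}}$ closes the bootstrap.

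For the refinement with $\|\Omega\|_{L^\infty}$ in place of $\|\nabla u\|_{L^\infty}$ when $s>1$, I would combine the vorticity equation for density-dependent Euler (whose source term is, up to the curl of $f$, the skew product of $\nabla a$ and $\nabla\Pi$) with the classical logarithmic interpolation inequality
$$\|\nabla u\|_{L^\infty}\leq C\bigl(1+\|u\|_{L^2}+\|\Omega\|_{L^\infty}\log(e+\|u\|_{B^s_{\infty,r}})\bigr),$$
and then invoke Osgood's lemma instead of Gronwall's to close the resulting logarithmic differential inequality. The negative-time statement is obtained by time reversal $t\mapsto -t$, which turns the system into itself with $(u,f)$ replaced by $(-u,-f)$.

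The main obstacle, as in the local existence theory, is the elliptic estimate for $\|\nabla\Pi\|_{B^s_{\infty,r}}$: the factor $\nabla\log\rho$ appearing in \eqref{eq:pressure} has the same Besov regularity as $\nabla\Pi$ itself, so a direct product law would be circular. This is precisely the difficulty that the new method introduced in the paper resolves by interpolation between the $L^2$ bound $a_*\|\nabla\Pi\|_{L^2}\leq\|u\cdot\nabla u\|_{L^2}$ and the high-regularity estimate for $\Delta\Pi$; that tool is the cornerstone of the local existence proof and can be recycled here verbatim, which is why the continuation criterion has the same shape as the one obtained in \cite{D2} for $1<p<\infty$.
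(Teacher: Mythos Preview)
Your overall architecture is correct and matches the paper: first show the $B^s_{\infty,r}$ norms stay bounded on $[0,T^*[$ under \eqref{eq:cont_cond}, then restart at a time close to $T^*$ using Theorem~\ref{th:L^2}. The vorticity refinement via the logarithmic inequality is also exactly what the paper does.

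There are, however, two genuine technical discrepancies worth flagging.

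\textbf{The pressure estimate.} You propose to recycle the interpolation trick from the local-existence proof (the one leading to \eqref{est:Pi_Bes}). That estimate produces a factor $\bigl(1+\|\nabla a\|^\gamma_{B^{s-1}_{\infty,r}}\bigr)$ in front of $\|\nabla\Pi\|_{L^2}$, \emph{not} the term $\|\nabla\Pi\|_{B^{s-1}_{\infty,r}}$ that you write down; the nonlinear dependence on $\|\nabla a\|_{B^{s-1}_{\infty,r}}$ would obstruct a linear Gronwall closure. What the paper actually does in Lemma~\ref{l:cont_1} (for $s>1$) is different: it applies the tame product estimate of Corollary~\ref{c:op} at regularity level $s-1$ to $\nabla a\cdot\nabla\Pi$, obtaining
\[
\|\nabla a\cdot\nabla\Pi\|_{B^{s-1}_{\infty,r}}\leq C\bigl(\|\nabla a\|_{L^\infty}\|\nabla\Pi\|_{B^{s-1}_{\infty,r}}+\|\nabla a\|_{B^{s-1}_{\infty,r}}\|\nabla\Pi\|_{L^\infty}\bigr),
\]
so that the high norm $\|\nabla a\|_{B^{s-1}_{\infty,r}}$ enters only linearly, multiplied by the integrable quantity $\|\nabla\Pi\|_{L^\infty}$.

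\textbf{The missing $L^\infty$ bound on $\nabla a$.} Even with the tame estimate above, the coefficient in front of $\|\nabla\Pi\|_{B^{s-1}_{\infty,r}}$ is $\|\nabla a\|_{L^\infty}$, which is \emph{not} a priori controlled by \eqref{eq:cont_cond}. The paper closes this by observing that $\nabla a$ itself satisfies a transport equation, whence
\[
\|\nabla a(t)\|_{L^\infty}\leq\|\nabla a_0\|_{L^\infty}\exp\Bigl(\int_0^t\|\nabla u\|_{L^\infty}\,d\tau\Bigr),
\]
which \emph{is} bounded by \eqref{eq:cont_cond}. Similarly, the transport estimate you quote for $\|\rho\|_{B^s_{\infty,r}}$ with only $\|\nabla u\|_{L^\infty}$ in the exponential is not quite Proposition~\ref{p:transport} (for $\sigma>1$ that proposition requires $\|\nabla u\|_{B^{\sigma-1}_{\infty,r}}$); the paper instead uses the commutator estimate of \cite[Lemma~2.100]{BCD} to get \eqref{eq:q1}, which carries an extra term $\|\nabla\rho\|_{L^\infty}\|u\|_{B^s_{\infty,r}}$, again harmless once $\|\nabla\rho\|_{L^\infty}$ is bounded as above.

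Finally, note that the tame estimate \eqref{eq:dPi} fails in the endpoint case $s=r=1$; the paper treats that case separately in Section~\ref{s:W^1,4} via the vorticity equation and Inequalities \eqref{eq:vort1}--\eqref{eq:vort2}. Your sketch does not distinguish this case.
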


{}From this result, as our assumption on $(r,s)$  implies that $B^{s-1}_{\infty,r}\hookrightarrow L^\infty$, keeping in mind that
$B^1_{\infty,1}$ is the largest Besov space included in $C^{0,1}$, we immediately get the following:

\begin{cor} \label{C:lifespan}
 The lifespan of a solution in $B^s_{\infty,r}$ with $s>1$ is the same as the lifespan in $B^1_{\infty,1}$.
\end{cor}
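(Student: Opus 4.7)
Let $T^*_s$ and $T^*_1$ denote the maximal lifespans of the solution furnished by Theorem~\ref{th:L^2} at the $B^s_{\infty,r}$-level and at the $B^1_{\infty,1}$-level respectively, for the same initial triple $(\rho_0,u_0,f)$. The inequality $T^*_s\le T^*_1$ is the easy half: for $s>1$ the Besov embedding $B^s_{\infty,r}\hookrightarrow B^1_{\infty,1}$ holds for every $r$, so the $B^s_{\infty,r}$-solution is automatically a $B^1_{\infty,1}$-solution with the same data, and the uniqueness statement at the $B^1_{\infty,1}$-level (Theorem~\ref{th:L^2} with $s=r=1$) forces the two to coincide on their common interval of existence.

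For the reverse inequality, argue by contradiction and assume $T^*_s<T^*_1$. On the compact interval $[0,T^*_s]$ the $B^1_{\infty,1}$-solution lives comfortably, and because $B^1_{\infty,1}\hookrightarrow C^{0,1}$ we already have $\int_0^{T^*_s}\|\nabla u\|_{L^\infty}\,dt<\infty$. In view of the continuation criterion of Theorem~\ref{th:cont}, to extend the $B^s_{\infty,r}$-solution past $T^*_s$ it is enough to establish
\[
\int_0^{T^*_s}\|\nabla\Pi\|_{B^{s-1}_{\infty,r}}\,dt<\infty,
\]
which is the only non-trivial point.

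To obtain it, I would run a bootstrap on the $B^s_{\infty,r}$-norms. The $L^\infty([0,T^*_s];L^\infty)$-bound on $\nabla u$ together with the standard transport estimate propagates $\rho\in L^\infty([0,T^*_s];B^s_{\infty,r})$, with the lower bound on $\rho$ also preserved. The pressure equation \eqref{eq:pressure} may then be treated via the new elliptic estimate devised in this paper for the proof of Theorem~\ref{th:L^2}, which interpolates between the $L^2$-bound $a_*\|\nabla\Pi\|_{L^2}\le\|u\cdot\nabla u\|_{L^2}$ and a high-regularity bound in order to absorb the lower-order term $\nabla\log\rho\cdot\nabla\Pi$ without any smallness hypothesis on $\rho$. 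Plugging the resulting tame bound for $\nabla\Pi$ into the momentum equation and closing a Gronwall loop yields uniform $B^s_{\infty,r}$-control on $(\rho,u)$ and the desired $L^1([0,T^*_s];B^{s-1}_{\infty,r})$-bound for $\nabla\Pi$, contradicting the maximality of $T^*_s$.

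The main obstacle is precisely this pressure estimate in $B^{s-1}_{\infty,r}$: naive elliptic theory for \eqref{eq:ell} either loses regularity or forces the perturbative hypothesis of \cite{D2}, so one has to invoke the new interpolation approach highlighted in the introduction. Once this ingredient is in place, the bootstrap closes because the $B^1_{\infty,1}$-control on $(\rho,u,\nabla\Pi)$, free of charge from the assumption $T^*_s<T^*_1$, dominates every cross-term appearing in Gronwall, so no circular dependence on the $B^s_{\infty,r}$-norm of $u$ arises.
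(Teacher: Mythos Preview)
Your overall strategy is sound and in effect spells out what the paper treats as a one-line consequence of Theorem~\ref{th:cont}: there the authors simply note the embeddings $B^{s-1}_{\infty,r}\hookrightarrow L^\infty$ and $B^1_{\infty,1}\hookrightarrow C^{0,1}$ and declare the corollary ``immediate''. Your bootstrap makes the underlying mechanism explicit, which is essentially the content of Lemma~\ref{l:cont_1}.

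There is one genuine imprecision to fix. You assert that the $L^\infty$-bound on $\nabla u$ ``together with the standard transport estimate propagates $\rho\in L^\infty([0,T^*_s];B^s_{\infty,r})$''. For $s>1$, Proposition~\ref{p:transport} requires $\nabla u\in L^1(B^{s-1}_{\infty,r})$, not merely $L^1(L^\infty)$, so $\rho$ cannot be propagated in $B^s_{\infty,r}$ on its own. What one actually needs is the \emph{tame} commutator estimate (cf.\ \eqref{eq:q1} in the proof of Lemma~\ref{l:cont_1}),
\[
\|\rho(t)\|_{B^s_{\infty,r}}\le\|\rho_0\|_{B^s_{\infty,r}}+C\int_0^t\bigl(\|\nabla u\|_{L^\infty}\|\rho\|_{B^s_{\infty,r}}+\|\nabla\rho\|_{L^\infty}\|u\|_{B^s_{\infty,r}}\bigr)\,d\tau,
\]
which couples $\|\rho\|_{B^s_{\infty,r}}$ to $\|u\|_{B^s_{\infty,r}}$. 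For the same reason, your final claim that ``no circular dependence on the $B^s_{\infty,r}$-norm of $u$ arises'' is misleading: the high norms of $\rho$, $u$ and $\nabla\Pi$ are genuinely coupled. The correct point is that this coupling is \emph{linear}, with all the Gronwall coefficients --- $\|\nabla u\|_{L^\infty}$, $\|\nabla\rho\|_{L^\infty}$, $\|\nabla\Pi\|_{L^\infty}$ --- furnished by the $B^1_{\infty,1}$-solution on $[0,T^*_s]$, which is what allows the loop to close. Once you rephrase these two steps, your argument is complete.
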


\smallbreak
As pointed out in \cite{D2},  hypothesis $u_0\in L^2$ is somewhat restrictive  in dimension $N=2$
as  if, say,  the initial vorticity  $\omega_0$ is in $L^1$
then  it implies that $\omega_0$ has average $0$ over $\mbb{R}^2$.
In particular, assuming that $u_0\in L^2(\R^2)$ precludes our considering
general data with initially compactly supported nonnegative vorticity (e.g.  vortex patches
as in \cite{Ch}, Chapter 5).

The following statement aims at considering 
initial data \emph{with infinite energy}. For simplicity,
we suppose the external force to be $0.$

\begin{thm} \label{th:W^1,4}
 Let $(s,r)$ be as in Theorem \ref{th:L^2}.
Let $\rho_0\in B^{s}_{\infty,r}$ be bounded away from $0,$
 and $u_0\in B^{s}_{\infty,r}\cap W^{1,4}$.

Then there exist a positive time $T$ and a unique solution $(\rho,u,\nabla\Pi)$ on $[-T,T]\times\mbb{R}^N$ of System \eqref{eq:ddeuler}
with external force $f\equiv0$, satisfying the following properties:
\begin{itemize}
\item $\rho\,\in\,\mc{C}([-T,T];B^s_{\infty,r})$ bounded away from $0,$
\item $u\,\in\,\mc{C}([-T,T];B^s_{\infty,r}\cap W^{1,4})$ and $\d_tu\in\cC([-T,T];L^2),$
\item $\nabla\Pi\,\in\,L^1([-T,T];B^s_{\infty,r})\cap\mc{C}([-T,T];L^2)$.
\end{itemize}
As above, the continuity in time with values in $B^s_{\infty,r}$ is only weak  if $r=\infty$.
\end{thm}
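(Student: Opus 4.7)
The strategy is to adapt the proof of Theorem \ref{th:L^2}, substituting $L^4$ for $L^2$ throughout. The substitution is natural under the hypothesis $u_0\in B^s_{\infty,r}\cap W^{1,4}$: since $s>1$ implies $B^s_{\infty,r}\hookrightarrow L^\infty$, the quadratic source $u\cdot\nabla u$ driving the pressure equation \eqref{eq:pressure} lies in $L^4$, which plays here the role that $u\cdot\nabla u\in L^2$ played in the finite-energy case.

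\medbreak

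I would first set up an approximation scheme. Regularise the initial data by spectral cut-off: take $\rho_0^n:=S_n\rho_0$, which remains uniformly bounded away from $0$, and define $u_0^n$ by truncating $u_0$ with a spatial cut-off and reprojecting onto divergence-free fields, so that $u_0^n\in L^2\cap B^s_{\infty,r}$ and $u_0^n\to u_0$ in $B^{s'}_{\infty,r}\cap W^{1,4}$ for every $s'<s$. Theorem \ref{th:L^2} applied to each $(\rho_0^n,u_0^n)$ then yields solutions $(\rho^n,u^n,\nabla\Pi^n)$ defined on intervals $[-T^n,T^n]$.

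\medbreak

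The heart of the argument is to derive uniform bounds on a common interval $[-T,T]$. The $B^s_{\infty,r}$ estimates on $(\rho^n,u^n,\nabla\Pi^n)$ carry over essentially verbatim from the proof of Theorem \ref{th:L^2}: they rely on transport estimates for $\rho^n$ and $u^n$, coupled with the high-regularity pressure bound coming from \eqref{eq:pressure}. The \emph{new} ingredients are the $W^{1,4}$ bound on $u^n$ and the $L^4$ bound on $\nabla\Pi^n$. For the pressure, I would apply $L^4$-Calder\'on-Zygmund regularity for the Laplacian to \eqref{eq:pressure} and absorb the lower-order term $\nabla\log\rho\cdot\nabla\Pi$ by interpolating between this $L^4$ estimate and the $B^{s-1}_{\infty,r}$ estimate, exactly as outlined in the introduction. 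For the velocity, the momentum equation $\d_tu+u\cdot\nabla u+(1/\rho)\nabla\Pi=0$ together with $\div u=0$, tested successively against $|u|^2u$ and (after differentiating in~$x$) against $|\nabla u|^2\nabla u$, yields a Gronwall-type bound on $\|u\|_{W^{1,4}}$ whose right-hand side is controlled by $\|\nabla u\|_{L^\infty}$, by $\|\nabla\Pi\|_{L^4}+\|\nabla^2\Pi\|_{L^4}$, and by the $B^s_{\infty,r}$-norm of $\rho$.

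\medbreak

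Once the uniform bounds are in hand, compactness (Ascoli combined with interpolation in lower-regularity spaces) yields a subsequence converging to a solution $(\rho,u,\nabla\Pi)$ of \eqref{eq:ddeuler} with the required regularity; the $L^2$ information on $\nabla\Pi$ and $\d_tu$ claimed in the theorem is recovered a posteriori from the equations themselves, using the structure of $u\cdot\nabla u$ and the $L^4$ pressure bound. Uniqueness and time-continuity follow the pattern of Theorem \ref{th:L^2}, with the stability estimate for the difference of two solutions now performed in $L^4$ instead of $L^2$. The main obstacle I anticipate is precisely this $L^4$ elliptic estimate: the $L^2$-variational bound $a_*\|\nabla\Pi\|_{L^2}\leq\|u\cdot\nabla u\|_{L^2}$ has no direct $L^4$ analog, so one must appeal to $L^p$-regularity for divergence-form operators with Lipschitz coefficients---either as a black-box result or by writing $a\Delta\Pi=\div F-\nabla a\cdot\nabla\Pi$ and iterating an absorption argument---and one must track constants carefully to ensure the resulting lifespan is independent of~$n$.
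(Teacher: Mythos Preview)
Your overall scheme is reasonable, but you have missed the central simplification that makes the $W^{1,4}$ framework work, and as a result you are manufacturing an obstacle that is not there.

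The point of the hypothesis $u\in W^{1,4}$ is precisely that it puts the quadratic term $u\cdot\nabla u$ back into $L^2$: by H\"older, $\|u\cdot\nabla u\|_{L^2}\leq\|u\|_{L^4}\|\nabla u\|_{L^4}$. Hence the variational estimate of Lemma~\ref{l:laxmilgram} applies \emph{verbatim} and gives
\[
a_*\|\nabla\Pi\|_{L^2}\leq\|u\cdot\nabla u\|_{L^2}\leq C\|u\|_{L^4}\|\Omega\|_{L^4}.
\]
This is exactly the ``exhibiting an $L^2$ estimate for $\nabla\Pi$ even though $u\notin L^2$'' that the paper flags in Remark~\ref{r:W^1,4}. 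So your ``main obstacle'' --- the lack of an $L^4$ analog of the $L^2$ energy bound --- never arises: the $L^2$ bound is still available and is what drives the whole argument. There is no need for $L^p$-regularity theory for divergence-form operators, nor for any absorption iteration at the level of $L^4$.

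Two further differences are worth noting. First, the paper controls $\|\nabla u\|_{L^4}$ through the \emph{vorticity}: one estimates $\|\Omega\|_{L^4}$ from its transport equation~\eqref{eq:vort} (which involves only $\nabla a\wedge\nabla\Pi$, not $\nabla^2\Pi$) and then invokes Calder\'on--Zygmund, $\|\nabla u\|_{L^4}\leq C\|\Omega\|_{L^4}$. Your plan to differentiate the momentum equation and test against $|\nabla u|^2\nabla u$ forces you to control $\|\nabla^2\Pi\|_{L^4}$, which is an unnecessary detour. Second, both the convergence of the approximating sequence and the uniqueness are still carried out in $L^2$ in the paper (the truncated velocities $\Phi_n u_0$ lie in $L^2$, and for the limit one uses $\partial_t u\in\cC([0,T];L^2)$ together with $\delta u(0)=0$ to place $\delta u$ in $\cC^1([0,T];L^2)$, so Proposition~\ref{p:uni} applies). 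Redoing stability in $L^4$ is possible but again superfluous.

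In short: your outline is not wrong, but it is considerably heavier than needed because it overlooks that $W^{1,4}$ was chosen to keep the $L^2$ pressure estimate alive.
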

\begin{rem} 
Under the above hypothesis, a continuation criterion in the spirit
of Theorem \ref{th:cont} may be proved. The details are left to the reader. 

Let us also point out that  there is some freedom over the $W^{1,4}$ assumption (see Remark
\ref{r:W^1,4} below). 
\end{rem}
 
 On the one hand,  the existence results that we stated so 
 far  are \emph{local in time} even  in the two-dimensional case.
 On the other hand, it is well known that the classical two-dimensional incompressible Euler equations
 are globally  well-posed, a result that goes back to the pioneering work by V. Wolibner
 in \cite{W}  (see also  \cite{Yu,hmidiK,V} for global results in the case of less regular data).
 In the homogeneous case, the global 
 existence stems from the fact that  the vorticity $\omega$ is transported by the flow associated to the solution: we have
$$
\partial_t\omega\,+\,u\cdot\nabla\omega\,=\,0\,.
$$
In the nonhomogeneous context  this relation is not true any longer: 
we have instead
\begin{equation}\label{eq:vorticity}
\partial_t\omega\,+\,u\cdot\nabla\omega\,+\,\nabla\left(\frac{1}{\rho}\right)\wedge\nabla\Pi\,=\,0\,.
\end{equation}

If the classical homogeneous case has been deeply studied, to our knowledge there is no literature about the time of existence of solutions for the density-dependent
incompressible Euler system. In the last section of this paper, we 
establish lower bounds for the lifespan
of a solution of \eqref{eq:ddeuler}.

Roughly, we show that  in  any space dimension,  if the initial velocity is 
of order $\veps$ ($\veps$ small enough),
\emph{without any restriction on the density of the fluid} then the lifespan is 
at least of order $\veps^{-1}$ (see the exact statement in Theorem \ref{th:ND}). 

Next, taking advantage of Equality \eqref{eq:vorticity} and of an
estimate for the transport equation that has been established recently by M. Vishik in \cite{V}
(and generalized by T. Hmidi and S. Keraani in \cite{HK}), we show  that 
the lifespan of the solution 
tends to infinity if $\rho_0-1$ goes to $0.$ 
More precisely, Theorem \ref{th:2D} states that if 
$$
\|\rho_0-1\|_{B^1_{\infty,1}}=\eta\quad\hbox{and}\quad
\|\omega_0\|_{B^0_{\infty,1}}+\|u_0\|_{L^2}=\eps
$$
with $\eta$ small enough,
then  the lifespan is at least of order
$\eps^{-1}\log(\log\eta^{-1}).$

\medbreak

The paper is organized as follows. In the next  section, we introduce the tools
needed for proving our results: the Littlewood-Paley decomposition, the definition of the nonhomogeneous Besov spaces $B^s_{p,r}$ and the
paradifferential calculus, and finally some classical results 
about transport equations in $B^s_{p,r}$ and  elliptic equations. Sections \ref{s:L^2} and \ref{s:W^1,4}
  are devoted to the proof of our local existence statements
  first in the finite energy case and next if the initial velocity is in $W^{1,4}.$ 
  Finally, in the last section we  state and prove  results about the
lifespan of a solution of our system, focusing on the particular  case of space dimension $N=2$.


\section{Tools}\label{s:tools}

Our results mostly rely on  Fourier analysis methods
based on  a nonhomogeneous dyadic partition of unity 
with respect to the Fourier variable, the so-called Littlewood-Paley decomposition. 
Unless otherwise specified, all the results which are presented
in this section are proved in \cite{BCD}.

In order to define a  Littlewood-Paley decomposition, 
fix a smooth radial function
$\chi$ supported in (say) the ball $B(0,\frac43),$ 
equals to $1$ in a neighborhood of $B(0,\frac34)$
and such that $r\mapsto\chi(r\,e_r)$ is nonincreasing
over $\R_+,$ and set
$\varphi(\xi)=\chi(\frac\xi2)-\chi(\xi).$
\smallbreak
The {\it dyadic blocks} $(\Delta_j)_{j\in\Z}$
 are defined by\footnote{Throughout we agree  that  $f(D)$ stands for 
the pseudo-differential operator $u\mapsto\cF^{-1}(f\cF u).$} 
$$
\dj:=0\ \hbox{ if }\ j\leq-2,\quad\Delta_{-1}:=\chi(D)\quad\hbox{and}\quad
\Delta_j:=\varphi(2^{-j}D)\ \text{ if }\  j\geq0.
$$
We  also introduce the following low frequency cut-off:
$$
S_ju:=\chi(2^{-j}D)=\sum_{j'\leq j-1}\Delta_{j'}\quad\text{for}\quad j\geq0.
$$
The following classical properties will be used freely throughout in the paper:
\begin{itemize}
\item for any $u\in\cS',$ the equality $u=\sum_{j}\dj u$ holds true in $\cS'$;
\item for all $u$ and $v$ in $\cS',$
the sequence
$(S_{j-1}u\,\dj v)_{j\in\N}$ is spectrally
supported in dyadic annuli.
\end{itemize}
One can now define what a Besov space $B^s_{p,r}$ is:
\begin{defi}
\label{def:besov}
  Let  $u$ be a tempered distribution, $s$ a real number, and 
$1\leq p,r\leq\infty.$ We set
$$
\|u\|_{B^s_{p,r}}:=\bigg(\sum_{j} 2^{rjs}
\|\Delta_j  u\|^r_{L^p}\bigg)^{\frac{1}{r}}\ \text{ if }\ r<\infty
\quad\text{and}\quad
\|u\|_{B^s_{p,\infty}}:=\sup_{j}\left( 2^{js}
\|\Delta_j  u\|_{L^p}\right).
$$
We then define the space $B^s_{p,r}$ as  the
subset of  distributions $u\in {\cS}'$ such  that
$\|u\|_{B^s_{p,r}}$ is finite.
\end{defi}
  
 {}From the above definition, it is easy to show that for all $s\in\R,$ the Besov space $B^s_{2,2}$ coincides
  with the nonhomogeneous Sobolev space $H^s.$
  Let us also point out that for any $k\in\N$ and $p\in[1,\infty],$
   we have the following chain of continuous embedding:
 $$
 B^k_{p,1}\hookrightarrow W^{k,p}\hookrightarrow B^k_{p,\infty}.
 $$
  where  $W^{k,p}$ denotes the set of $L^p$ functions
 with derivatives up to order $k$ in $L^p.$
\medbreak

The Besov spaces have many nice  properties which will be recalled throughout the paper
whenever they are needed.
For the time being, let us just recall that if Condition $(C)$ holds true
then $B^s_{p,r}$ is an algebra continuously embedded in the set $C^{0,1}$
of bounded Lipschitz functions (see e.g. \cite{BCD}, Chap. 2), and that the gradient operator
maps $B^s_{p,r}$ in $B^{s-1}_{p,r}.$

\medbreak

The following result will be also needed:
\begin{prop}\label{p:CZ}
Let $F:\R^N\rightarrow\R$
 be a smooth homogeneous function of degree $m$ away from a neighborhood of the origin.
 Then for all $(p,r)\in[1,\infty]^2$ and $s\in\R,$  Operator $F(D)$ maps $B^s_{p,r}$
 in $B^{s-m}_{p,r}.$ 
 \end{prop}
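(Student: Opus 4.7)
\emph{Plan.} The approach is to use the homogeneity of $F$ to reduce, for each dyadic block, the action of $F(D)$ to multiplication by $2^{jm}$ composed with an operator that is bounded on $L^p$ uniformly in $j$.

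First I would handle the high-frequency blocks $j\geq 0$. Writing $\dj F(D)u=\varphi(2^{-j}D)F(D)u$, I observe that the symbol $\varphi(2^{-j}\xi)F(\xi)$ is supported in an annulus at scale $2^j$ where $F$ is smooth. Homogeneity then gives $F(\xi)=2^{jm}F(2^{-j}\xi)$ on this annulus, so, setting $g(\xi):=\varphi(\xi)F(\xi)$ (a smooth, compactly supported function), we obtain the key identity
\[
\dj F(D)u\,=\,2^{jm}\,g(2^{-j}D)u,\qquad j\geq 0.
\]
Since $\cF^{-1}g$ is Schwartz, a scaling change of variable shows that $g(2^{-j}D)$ is convolution with a kernel whose $L^1$ norm is independent of $j$, so it is bounded on $L^p$ with a uniform constant. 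Moreover, the Fourier support of $g(2^{-j}\cdot)$ lies in an annulus of size $2^j$, so $g(2^{-j}D)u$ depends only on a slightly enlarged dyadic shell around $2^j$; concretely $g(2^{-j}D)u=g(2^{-j}D)\bigl(\sum_{|k-j|\leq N_0}\Delta_k u\bigr)$ for some fixed $N_0\in\N$.

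Putting these together and multiplying by $2^{j(s-m)}$ gives
\[
2^{j(s-m)}\|\dj F(D)u\|_{L^p}\,\lesssim\,\sum_{|k-j|\leq N_0}2^{ks}\|\Delta_k u\|_{L^p}.
\]
Taking the $\ell^r$ norm in $j$ and exploiting the stability of $\ell^r$ under bounded-range index shifts then yields the desired estimate on the high-frequency part. For the low-frequency block $j=-1$, after extending $F$ smoothly through the origin (the high-frequency behavior is unaffected by this choice and only constants depend on it), the product $\chi\cdot F$ is a Schwartz function; hence $\Delta_{-1}F(D)u$ is a convolution with a Schwartz kernel and is controlled in $L^p$ by $\|\Delta_{-1}u\|_{L^p}$. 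I do not foresee any genuine obstacle: essentially all the work is done by the homogeneity-based rescaling above, and the only care needed is the mild book-keeping around the low-frequency extension of $F$ and the fattened dyadic blocks.
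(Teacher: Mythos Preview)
Your argument is correct and is essentially the standard proof; the paper itself does not give a proof but refers to \cite{BCD}, where the same homogeneity-and-rescaling argument is carried out. Two minor points of bookkeeping: (i) since the statement only assumes homogeneity of $F$ outside \emph{some} neighborhood of the origin, the identity $F(\xi)=2^{jm}F(2^{-j}\xi)$ may fail on the annuli corresponding to finitely many small $j\geq 0$, but those blocks are handled exactly like the low-frequency one (smooth compactly supported symbol); (ii) $F$ is already assumed smooth on all of $\R^N$, so no extension through the origin is needed, and $\Delta_{-1}F(D)u$ should be bounded by $\|\Delta_{-1}u\|_{L^p}+\|\Delta_0u\|_{L^p}$ rather than $\|\Delta_{-1}u\|_{L^p}$ alone, since the Fourier support of $\chi F$ may exceed that of $\chi$ restricted to where $\chi\equiv 1$. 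Neither of these affects the validity of your approach.
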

 
   \begin{rem}\label{r:CZ}
  Let $\cP$ be   the Leray projector  over divergence free vector fields
  and $\cQ:={\rm Id}-\cP.$
  Recall that   in Fourier variables, we have for all vector field $u$
   $$
 \hat{\cQ u}(\xi)=-\frac{\xi}{|\xi|^2}\,\xi\cdot\hat u(\xi).
 $$
  Therefore, both $(\Id-\Delta_{-1})\cP$ and 
  $(\Id-\Delta_{-1})\cQ$
     satisfy the assumptions of the above proposition with $m=0$ 
   hence are self-map on $B^s_{p,r}$ \emph{for any $s\in\R$ and $1\leq p,r\leq\infty.$} 
  \end{rem}
  The following lemma (referred in what follows as \emph{Bernstein's inequalities})
  describes the way derivatives act on spectrally localized functions.
  \begin{lem}
\label{lpfond}
{\sl
Let  $0<r<R.$   A
constant~$C$ exists so that, for any nonnegative integer~$k$, any couple~$(p,q)$ 
in~$[1,\infty]^2$ with  $q\geq p\geq 1$ 
and any function $u$ of~$L^p$,  we  have for all $\lambda>0,$
$$
\displaylines{
{\rm Supp}\, \widehat u \subset   B(0,\lambda R)
\Longrightarrow
\|\nabla^k u\|_{L^q} \leq
 C^{k+1}\lambda^{k+N(\frac{1}{p}-\frac{1}{q})}\|u\|_{L^p};\cr
{\rm Supp}\, \widehat u \subset \{\xi\in\R^N\,/\, r\lambda\leq|\xi|\leq R\lambda\}
\Longrightarrow C^{-k-1}\lambda^k\|u\|_{L^p}
\leq
\|\nabla^k u\|_{L^p}
\leq
C^{k+1}  \lambda^k\|u\|_{L^p}.
}$$
}
\end{lem}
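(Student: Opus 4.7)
My plan is to reduce both inequalities to standard Young-type estimates after localizing the Fourier multiplier at scale $\lambda$, and then keep careful track of how rescaling enters in the kernels. The proof splits naturally into the ``ball'' case and the ``annular'' case, with the lower bound in the annular case requiring an extra algebraic trick.

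First, I pick once and for all a smooth function $\phi\in\cC^\infty_c(\R^N)$ with $\phi\equiv 1$ on $B(0,R)$ and supported in a slightly larger ball, and a smooth $\widetilde\phi\in\cC^\infty_c(\R^N\setminus\{0\})$ with $\widetilde\phi\equiv 1$ on the annulus $\{r\leq|\xi|\leq R\}$ and supported in a slightly larger annulus around it. Denote $h:=\cF^{-1}\phi$ and $\widetilde h:=\cF^{-1}\widetilde\phi$; both are Schwartz functions. If $\Supp\widehat u\subset B(0,\lambda R)$ then $\widehat u=\phi(\cdot/\lambda)\widehat u$, so $u=\lambda^N h(\lambda\cdot)*u$, and therefore $\nabla^k u=\lambda^{N+k}(\nabla^k h)(\lambda\cdot)*u$. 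Applying Young's convolution inequality with exponent $s$ determined by $1+\frac1q=\frac1p+\frac1s$, and using the scaling identity $\|g(\lambda\cdot)\|_{L^s}=\lambda^{-N/s}\|g\|_{L^s}$, the powers of $\lambda$ telescope to $\lambda^{k+N(\frac1p-\frac1q)}$, and $C=C(\phi,k)$ absorbs $\|\nabla^k h\|_{L^s}$. This yields the first inequality (and, with $\widetilde\phi$ in place of $\phi$, the upper bound in the annular case with $q=p$).

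For the lower bound in the annular case, the key observation is that $\widetilde\phi$ is supported away from the origin, so the symbol
\[
\psi_\alpha(\xi):=\frac{(-i\xi)^\alpha}{|\xi|^{2k}}\,\widetilde\phi(\xi),\qquad|\alpha|=k,
\]
is smooth and compactly supported, hence has Schwartz inverse Fourier transform. The identity $|\xi|^{2k}=\sum_{|\alpha|=k}\binom{k}{\alpha}\xi^{2\alpha}$ (or any similar decomposition) gives the multiplier identity $\widetilde\phi(\xi)=\sum_{|\alpha|=k}\psi_\alpha(\xi)\,(i\xi)^\alpha$ on the annulus, hence everywhere if we then cut off trivially. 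Applied at scale $\lambda$, this reads $\widehat u(\xi)=\lambda^{-k}\sum_{|\alpha|=k}\psi_\alpha(\xi/\lambda)\,\widehat{\d^\alpha u}(\xi)$, which translates back to $u=\lambda^{-k}\sum_{|\alpha|=k}\lambda^N(\cF^{-1}\psi_\alpha)(\lambda\cdot)*\d^\alpha u$. Young's inequality with $L^1*L^p\hookrightarrow L^p$ and the scaling $\|\lambda^N K(\lambda\cdot)\|_{L^1}=\|K\|_{L^1}$ then delivers $\|u\|_{L^p}\leq C^{k+1}\lambda^{-k}\|\nabla^k u\|_{L^p}$, which is the desired lower bound.

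The only nonroutine step is the choice of the symbols $\psi_\alpha$ for the reverse inequality: one needs to invert $\xi\mapsto\xi^\alpha$ on an annulus in a way that produces an $L^1$ convolution kernel, and this is where the support condition $\Supp\widehat u\subset\{r\lambda\leq|\xi|\leq R\lambda\}$ is essential (the symbols $1/|\xi|^{2k}$ are singular at the origin). The dependence of $C$ on $k$ through constants of the form $C^{k+1}$ comes from collecting $\|\nabla^k h\|_{L^s}$, $\|\cF^{-1}\psi_\alpha\|_{L^1}$, and the binomial factors, each of which grows at most geometrically in $k$ by standard Schwartz-norm estimates; I would not detail this routine check.
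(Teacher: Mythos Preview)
Your proof is correct and is the standard argument for Bernstein's inequalities; the paper itself does not prove this lemma but refers to \cite{BCD}, where exactly this convolution-with-rescaled-kernel plus Young's inequality approach is carried out. One cosmetic slip: with your definition of $\psi_\alpha$ the identity should read $\widetilde\phi(\xi)=\sum_{|\alpha|=k}\binom{k}{\alpha}\psi_\alpha(\xi)(i\xi)^\alpha$, but you already anticipate this with ``or any similar decomposition'' and the extra multinomial coefficients are harmlessly absorbed into the constant.
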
   
  The first Bernstein inequality entails the following embedding result:
  \begin{cor}\label{c:embed}
  The space $B^{s_1}_{p_1,r_1}$ is continuously embedded in the space $B^{s_2}_{p_2,r_2}$ whenever
  $1\leq p_1\leq p_2\leq\infty$ and
  $$
  s_2< s_1-N/p_1+N/p_2\quad\hbox{or}\quad
  s_2=s_1-N/p_1+N/p_2\ \hbox{ and }\ 1\leq r_1\leq r_2\leq\infty. 
  $$
  \end{cor}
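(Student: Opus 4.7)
The plan is to combine Bernstein's inequality (applied blockwise) with elementary estimates on weighted $\ell^r$-sequences.

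First, I would note that for every $j\geq-1$, the dyadic block $\Delta_j u$ is Fourier-supported in a ball of radius of order $2^j$ (with the understanding that for $j=-1$ this radius is just $O(1)$). The first Bernstein inequality from Lemma \ref{lpfond}, applied to $\Delta_j u$ with exponents $p_1\leq p_2$, therefore gives
$$
\|\Delta_j u\|_{L^{p_2}}\,\leq\,C\,2^{jN(1/p_1-1/p_2)}\,\|\Delta_j u\|_{L^{p_1}}
$$
with a constant $C$ that does not depend on $j$. Multiplying by $2^{js_2}$ and setting $\alpha:=s_1-s_2-N/p_1+N/p_2\geq0$, I rewrite this as
$$
2^{js_2}\|\Delta_j u\|_{L^{p_2}}\,\leq\,C\,2^{-j\alpha}\,\bigl(2^{js_1}\|\Delta_j u\|_{L^{p_1}}\bigr).
$$

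Next I would split into two cases according to whether $\alpha$ is positive or zero. If $\alpha>0$, then the weight $2^{-j\alpha}$ is summable to any positive power over $j\geq-1$, so using the crude bound $2^{js_1}\|\Delta_j u\|_{L^{p_1}}\leq\|u\|_{B^{s_1}_{p_1,\infty}}$ and the trivial embedding $\ell^{r_1}\hookrightarrow\ell^{\infty}$, I obtain
$$
\|u\|_{B^{s_2}_{p_2,r_2}}\,\leq\,C\,\|u\|_{B^{s_1}_{p_1,\infty}}\,\Bigl(\sum_{j\geq-1}2^{-j\alpha r_2}\Bigr)^{1/r_2}\,\leq\,C'\,\|u\|_{B^{s_1}_{p_1,r_1}}
$$
for arbitrary $r_1,r_2\in[1,\infty]$ (with the obvious modification if $r_2=\infty$). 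In the borderline case $\alpha=0$, the above display reduces to the pointwise comparison of sequences
$$
2^{js_2}\|\Delta_j u\|_{L^{p_2}}\,\leq\,C\,2^{js_1}\|\Delta_j u\|_{L^{p_1}},
$$
and the conclusion then follows from the elementary embedding $\ell^{r_1}\hookrightarrow\ell^{r_2}$, which is precisely where the extra assumption $r_1\leq r_2$ gets used.

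There is really no serious obstacle here; the argument is almost mechanical once Bernstein's inequality is in hand. The only mild point of care is that the low-frequency block $\Delta_{-1}u$ is supported in a ball rather than an annulus, but this is exactly the situation covered by the first (ball-supported) half of Lemma \ref{lpfond}, so it can be handled on the same footing as the high-frequency blocks.
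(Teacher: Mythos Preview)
Your proof is correct and follows exactly the approach indicated in the paper, which merely states that the corollary is a consequence of the first Bernstein inequality without spelling out the details. Your blockwise application of Lemma~\ref{lpfond} together with the $\ell^{r_1}\hookrightarrow\ell^{r_2}$ embedding is the standard way to make this precise.
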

 Let us now introduce the paraproduct operator and recall a few nonlinear estimates in Besov spaces. 
Constructing the paraproduct operator relies on the observation that, 
formally, any product  of two tempered distributions $u$ and $v,$ may be decomposed
into 
\begin{equation}\label{eq:bony}
uv=T_uv+T_vu+R(u,v)
\end{equation}
with 
$$
T_uv:=\sum_jS_{j-1}u\dj v,\quad
T_vu:=\sum_j S_{j-1}v\dj u\ \hbox{ and }\ 
R(u,v):=\sum_j\sum_{|j'-j|\leq1}\dj u\,\Delta_{j'}v.
$$
The above operator $T$ is called ``paraproduct'' whereas
$R$ is called ``remainder''.
\smallbreak
The paraproduct and remainder operators have many nice continuity properties. 
The following ones will be of constant use in this paper (see the proof in e.g. \cite{BCD}, Chap. 2):
\begin{prop}\label{p:op}
For any $(s,p,r)\in\R\times[1,\infty]^2$ and $t<0,$ the paraproduct operator 
$T$ maps $L^\infty\times B^s_{p,r}$ in $B^s_{p,r},$
and  $B^t_{\infty,\infty}\times B^s_{p,r}$ in $B^{s+t}_{p,r}.$
Moreover, the following estimates hold:
$$
\|T_uv\|_{B^s_{p,r}}\leq C\|u\|_{L^\infty}\|\nabla v\|_{B^{s-1}_{p,r}}\quad\hbox{and}\quad
\|T_uv\|_{B^{s+t}_{p,r}}\leq C\|u\|_{B^t_{\infty,\infty}}\|\nabla v\|_{B^{s-1}_{p,r}}.
$$
For any $(s_1,p_1,r_1)$ and $(s_2,p_2,r_2)$ in $\R\times[1,\infty]^2$ such that 
$s_1+s_2>0,$ $1/p:=1/p_1+1/p_2\leq1$ and $1/r:=1/r_1+1/r_2\leq1$
the remainder operator $R$ maps 
$B^{s_1}_{p_1,r_1}\times B^{s_2}_{p_2,r_2}$ in $B^{s_1+s_2}_{p,r}.$
\end{prop}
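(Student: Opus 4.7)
The plan is to combine, in the standard manner, the Fourier-localization of the dyadic blocks with Bernstein's inequalities (Lemma \ref{lpfond}). I will treat the paraproduct and the remainder separately and, for the paraproduct, the two continuity statements in sequence, the second being a refinement of the first.

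For $T_uv=\sum_j S_{j-1}u\,\Delta_j v$, the key localization is that $S_{j-1}u$ is Fourier-supported in a ball of radius $\sim 2^j$ while $\Delta_j v$ lives in an annulus of the same order, so each product $S_{j-1}u\,\Delta_j v$ is spectrally supported in an annulus $\{2^j r \leq |\xi|\leq 2^j R\}$ for some fixed $0<r<R$. Consequently $\Delta_{j'}(T_uv)$ receives contributions only from indices $j$ with $|j-j'|\leq N_0$, for some fixed $N_0$. H\"older's inequality together with $\|S_{j-1}u\|_{L^\infty}\leq\|u\|_{L^\infty}$ then gives
\[
\|\Delta_{j'}(T_uv)\|_{L^p}\leq C\|u\|_{L^\infty}\!\!\!\sum_{|j-j'|\leq N_0}\!\!\!\|\Delta_j v\|_{L^p}.
\]
The second part of Lemma \ref{lpfond} provides $\|\Delta_j v\|_{L^p}\lesssim 2^{-j}\|\Delta_j \nabla v\|_{L^p}$ for $j\geq 0$, so multiplying by $2^{j's}$ and taking the $\ell^r$ norm in $j'$ (a finite convolution, hence discrete Young) yields $\|T_uv\|_{B^s_{p,r}}\leq C\|u\|_{L^\infty}\|\nabla v\|_{B^{s-1}_{p,r}}$; the low-frequency block $\Delta_{-1}$ contributes only a harmless finite number of terms, absorbed in the constant.

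For the second estimate only the bound on $S_{j-1}u$ needs to be refined: writing $S_{j-1}u=\sum_{j''\leq j-2}\Delta_{j''}u$ and invoking the definition of the $B^t_{\infty,\infty}$ norm gives $\|S_{j-1}u\|_{L^\infty}\leq \|u\|_{B^t_{\infty,\infty}}\sum_{j''\leq j-2}2^{-j''t}$. The geometric series converges \emph{precisely because} $t<0$, producing $\|S_{j-1}u\|_{L^\infty}\lesssim 2^{-jt}\|u\|_{B^t_{\infty,\infty}}$. Substituting this bound for $\|u\|_{L^\infty}$ above, the factor $2^{-jt}$ gets absorbed into the Besov weight via $2^{j'(s+t)}\cdot 2^{-jt}\sim 2^{j's}$ when $|j-j'|\leq N_0$, and the remainder of the argument is unchanged.

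For $R(u,v)=\sum_j R_j$ with $R_j:=\sum_{|\nu|\leq 1}\Delta_j u\,\Delta_{j+\nu}v$, the spectral support of $R_j$ is now contained only in a \emph{ball} of radius $\sim 2^j$, so $\Delta_q R(u,v)=\sum_{j\geq q-N_0}\Delta_q R_j$ is an \emph{infinite} tail sum; this is the main obstacle and is precisely what forces the assumption $s_1+s_2>0$. H\"older with $1/p=1/p_1+1/p_2\leq 1$ bounds each block by $\|\Delta_j u\|_{L^{p_1}}\|\Delta_{j+\nu}v\|_{L^{p_2}}$, whence
\[
2^{q(s_1+s_2)}\|\Delta_q R_j\|_{L^p}\lesssim 2^{(q-j)(s_1+s_2)}\bigl(2^{js_1}\|\Delta_j u\|_{L^{p_1}}\bigr)\bigl(2^{js_2}\|\Delta_j v\|_{L^{p_2}}\bigr).
\]
Under $s_1+s_2>0$ the kernel $2^{(q-j)(s_1+s_2)}\mathbf{1}_{j\geq q-N_0}$ is $\ell^1$ in $q-j$, so a discrete Young inequality in $q$, followed by H\"older in $\ell^{r_1},\ell^{r_2}$ (using $1/r=1/r_1+1/r_2\leq 1$), delivers $R(u,v)\in B^{s_1+s_2}_{p,r}$ with the expected product estimate, completing the proof.
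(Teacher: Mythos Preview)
Your argument is correct and is precisely the classical proof: Fourier localization of the paraproduct blocks in dyadic annuli, H\"older plus the $L^\infty$ bound (or the refined $2^{-jt}$ bound via a geometric sum when $t<0$) on $S_{j-1}u$, reverse Bernstein to pass from $\Delta_j v$ to $\Delta_j\nabla v$, and a discrete Young/H\"older argument for the remainder under $s_1+s_2>0$. The paper does not give its own proof of this proposition but simply refers to \cite{BCD}, Chapter~2; your write-up reproduces exactly that standard argument, so there is nothing to compare.
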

Combining the above proposition with Bony's decomposition \eqref{eq:bony}, 
we easily get the following ``tame estimate'':
\begin{cor}\label{c:op}
Let $a$ be a bounded function such that $\nabla a\in B^{s-1}_{p,r}$ for some $s>0$
and $(p,r)\in[1,\infty]^2.$  Then for any $b\in B^s_{p,r}\cap L^\infty$ we have $ab\in B^s_{p,r}\cap L^\infty$
and there exists a constant $C$ depending only on $N,$ $p$ and $s$
such that 
$$
\|ab\|_{B^s_{p,r}}\leq C\Bigl(\|a\|_{L^\infty}\|b\|_{B^s_{p,r}}+\|b\|_{L^\infty}\|\nabla a\|_{B^{s-1}_{p,r}}\Bigr).
$$
\end{cor}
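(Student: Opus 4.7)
The plan is to apply Bony's decomposition \eqref{eq:bony}, writing $ab=T_ab+T_ba+R(a,b)$, and to estimate each of the three pieces separately using Proposition \ref{p:op}. The $L^\infty$ bound on $ab$ is immediate from $\|ab\|_{L^\infty}\leq\|a\|_{L^\infty}\|b\|_{L^\infty}$, so only the $B^s_{p,r}$ bound needs work.

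For $T_ab$, the first paraproduct estimate of Proposition \ref{p:op} gives
$$
\|T_ab\|_{B^s_{p,r}}\leq C\|a\|_{L^\infty}\|\nabla b\|_{B^{s-1}_{p,r}}\leq C\|a\|_{L^\infty}\|b\|_{B^s_{p,r}},
$$
where in the last step we used that $\nabla$ maps $B^s_{p,r}$ into $B^{s-1}_{p,r}$, as recalled in the excerpt. For the symmetric paraproduct $T_ba$, the same estimate applied with the roles of $u$ and $v$ reversed yields
$$
\|T_ba\|_{B^s_{p,r}}\leq C\|b\|_{L^\infty}\|\nabla a\|_{B^{s-1}_{p,r}},
$$
which is already in the desired form. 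Note that this is the step which truly uses the structure of the assumption: the paraproduct estimate has a built-in derivative shift, so it sees only $\nabla a$ and not $a$ itself, which is important because $a$ is not assumed to lie in any Besov space of positive regularity.

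For the remainder, I would apply the remainder part of Proposition \ref{p:op} with $(s_1,p_1,r_1)=(0,\infty,\infty)$ and $(s_2,p_2,r_2)=(s,p,r)$. The condition $s_1+s_2>0$ becomes exactly $s>0$ (which is precisely the hypothesis), while $1/p_1+1/p_2=1/p\leq1$ and $1/r_1+1/r_2=1/r\leq1$ are trivially satisfied. This yields
$$
\|R(a,b)\|_{B^s_{p,r}}\leq C\|a\|_{B^0_{\infty,\infty}}\|b\|_{B^s_{p,r}}\leq C\|a\|_{L^\infty}\|b\|_{B^s_{p,r}},
$$
using the elementary embedding $L^\infty\hookrightarrow B^0_{\infty,\infty}$. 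Summing the three pieces gives the claimed tame estimate.

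There is no real obstacle here; the only subtlety is picking the right exponents in the remainder estimate and recognizing that the hypothesis $s>0$ is used exclusively to satisfy $s_1+s_2>0$ there. The elegance of the statement lies in the fact that the paraproduct $T_ba$ is naturally controlled by $\|\nabla a\|_{B^{s-1}_{p,r}}$ rather than by $\|a\|_{B^s_{p,r}}$, which is what allows $a$ to be only $L^\infty$-integrable globally.
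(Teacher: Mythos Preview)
Your proof is correct and follows exactly the approach the paper has in mind: the paper merely states that the corollary follows by ``combining the above proposition with Bony's decomposition \eqref{eq:bony},'' and your argument fills in precisely those details. The choice of exponents $(s_1,p_1,r_1)=(0,\infty,\infty)$ for the remainder term is the right one, and your observation that the hypothesis $s>0$ is used only there is accurate.
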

The following result pertaining to the composition of functions
in Besov spaces will be needed for estimating the reciprocal  of the density (see the proof 
in \cite{D2}).
\begin{prop}\label{p:comp}
Let $I$ be an open  interval of $\R$  
and $F:I\rightarrow\R,$ a smooth function. 
Then for all compact subset $J\subset I,$ $s>0$ and $(p,r)\in[1,\infty]^2$ there exists a constant $C$
such that for all function $a$ valued in $J$ and with gradient in $B^{s-1}_{p,r},$  we have
$\nabla(F(a))\in B^{s-1}_{p,r}$ and 
$$
\|\nabla(F(a))\|_{B^{s-1}_{p,r}}\leq C\|\nabla a\|_{B^{s-1}_{p,r}}.
$$
\end{prop}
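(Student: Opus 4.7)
The plan is to prove Proposition~\ref{p:comp} by combining a harmless reduction to a compactly supported $F$, the chain rule, Bony's decomposition, and a Littlewood--Paley telescoping argument for the composition.

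Since $a$ takes values in the compact set $J\Subset I$, I first replace $F$ by $\chi F$ for some cutoff $\chi\in C^\infty_c(I)$ equal to $1$ on a neighborhood of $J$. This changes neither $F(a)$ nor any of its derivatives, and yields uniform bounds $\|F^{(k)}\|_{L^\infty(\R)}<\infty$ for every $k$; in particular $F'(a)\in L^\infty$ with a bound depending only on $F$ and $J$. By the chain rule $\nabla F(a)=F'(a)\nabla a$, and Bony's formula~\eqref{eq:bony} yields
$$F'(a)\,\nabla a\;=\;T_{F'(a)}\nabla a\;+\;T_{\nabla a}F'(a)\;+\;R(F'(a),\nabla a).$$
The first (paraproduct) term is immediately controlled by Proposition~\ref{p:op}, which gives $\|T_{F'(a)}\nabla a\|_{B^{s-1}_{p,r}}\leq C\|F'(a)\|_{L^\infty}\|\nabla a\|_{B^{s-1}_{p,r}}$, of the desired form.

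For the remaining two terms I need some positive regularity on $F'(a)$ itself, which I would produce via the telescoping identity
$$F'(a)-F'(S_{-1}a)\;=\;\sum_{j\geq -1}\bigl(F'(S_{j+1}a)-F'(S_j a)\bigr)\;=\;\sum_{j\geq -1}(\Delta_j a)\,M_j,\qquad M_j:=\int_0^1 F''(S_j a+t\Delta_j a)\,dt.$$
Since $a$ is bounded in $L^\infty$ and $F''$ is bounded on $\R$, each $M_j$ is uniformly bounded in $L^\infty$. A spectral-localization argument on the products $M_j\,\Delta_j a$, together with Bernstein's inequalities, then yields a dyadic block estimate of the form
$$2^{k\sigma}\,\|\Delta_k F'(a)\|_{L^p}\;\leq\;C\,c_k\,\|\nabla a\|_{B^{s-1}_{p,r}},\qquad (c_k)\in\ell^r,\ \|(c_k)\|_{\ell^r}=1,$$
for some exponent $\sigma>1-s$. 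Fed back into Proposition~\ref{p:op} together with $F'(a)\in L^\infty$, this bounds both $T_{\nabla a}F'(a)$ and $R(F'(a),\nabla a)$ by $C\|\nabla a\|_{B^{s-1}_{p,r}}$, closing the estimate.

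The main obstacle is the telescoping step, in which one must propagate the regularity of $a$ to $F'(a)$ \emph{without} invoking $\|\nabla a\|_{L^\infty}$: that quantity is not available under the sole hypothesis $\nabla a\in B^{s-1}_{p,r}$ when $s$ is small. The mechanism for avoiding it is to keep $M_j$ and $\Delta_j a$ together as a single product, exploit only the $L^\infty$ bound on $M_j$, and let Bernstein supply the cost $2^j$ of the derivative that ultimately falls on $\Delta_j a$; careful book-keeping of the dyadic scales then yields the claim with a constant depending only on $F$, $J$, $s$, $p$, $r$ and~$N$.
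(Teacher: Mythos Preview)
The paper does not actually prove Proposition~\ref{p:comp}; it merely cites \cite{D2}. So there is no in-paper argument to compare against, and your proposal must stand on its own.

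Your overall plan (cut off $F$, chain rule, Meyer telescoping) is the standard one and is on the right track, but the specific route you choose---Bony-decompose $F'(a)\nabla a$ and then feed regularity of $F'(a)$ into Proposition~\ref{p:op}---has a genuine gap when $0<s\le\frac12$. The telescoping of $F'(a)$ can deliver at best the same amount of regularity as $a$ itself: from $\|\Delta_j a\|_{L^\infty}\le C2^{-js}c_j\|\nabla a\|_{B^{s-1}_{p,r}}$ (Bernstein on the high blocks) one obtains the high-frequency part of $F'(a)$ in $B^{s}_{\infty,r}$, i.e.\ your $\sigma=s$. To control the remainder $R(F'(a),\nabla a)$ in $B^{s-1}_{p,r}$ via Proposition~\ref{p:op} you need $\sigma+(s-1)>0$, that is $s>\frac12$; for $s\le\frac12$ the sum of regularities is nonpositive and the remainder estimate simply does not close. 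Your sentence ``for some exponent $\sigma>1-s$'' hides exactly this point: there is no mechanism in the outline that produces $\sigma>s$, and ``careful book-keeping'' will not conjure extra regularity of $F'(a)$ out of $a\in L^\infty$ with $\nabla a\in B^{s-1}_{p,r}$.

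The clean fix---and what the standard references do---is to \emph{bypass} the Bony decomposition of $F'(a)\nabla a$ altogether and telescope $F(a)$ directly:
\[
F(a)=F(S_0a)+\sum_{j\ge0}m_j\,\Delta_j a,\qquad m_j=\int_0^1 F'(S_ja+t\Delta_ja)\,dt.
\]
Using $\|\nabla^n m_j\|_{L^\infty}\le C_n 2^{jn}$ (which only needs $\|a\|_{L^\infty}$) one gets $\|\Delta_k(m_j\Delta_j a)\|_{L^p}\le C\min(1,2^{-(k-j)M})\|\Delta_j a\|_{L^p}$, and since $\|\Delta_j a\|_{L^p}\le C2^{-j}\|\Delta_j\nabla a\|_{L^p}$ for $j\ge0$, Young's inequality in $\ell^r$ with the kernel $2^{ns}\min(1,2^{-nM})\in\ell^1$ (here only $s>0$ is used) yields $\bigl(2^{ks}\|\Delta_k F(a)\|_{L^p}\bigr)_{k\ge0}\in\ell^r$ with norm $\le C\|\nabla a\|_{B^{s-1}_{p,r}}$. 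By Bernstein this is exactly the high-frequency part of $\|\nabla F(a)\|_{B^{s-1}_{p,r}}$. The low-frequency block $\|\Delta_{-1}\nabla F(a)\|_{L^p}$ is handled by writing $\nabla F(S_0a)=F'(S_0a)\,S_0\nabla a\in L^p$ and summing $\|\Delta_{-1}\nabla(m_j\Delta_j a)\|_{L^p}\le C\|\Delta_j a\|_{L^p}\le C2^{-js}c_j\|\nabla a\|_{B^{s-1}_{p,r}}$, which is $\ell^1$ in $j$ because $s>0$. This covers the full range $s>0$ with no restriction, and never requires more regularity on $F'(a)$ than $L^\infty$.

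In short: keep your reduction and the telescoping idea, but apply the telescoping to $F(a)$ and estimate the blocks of $F(a)$ directly, instead of passing through a product estimate on $F'(a)\nabla a$. For the record, in the present paper the proposition is only invoked with $s\ge1$, so your argument would suffice for all the applications here; the gap matters only for the stated range $s>0$.
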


Our results concerning Equations \eqref{eq:ddeuler} rely strongly on a priori estimates
in Besov spaces for the transport equation
$$
\left\{\begin{array}{l}
\d_ta+v\cdot\nabla a=f,\\[1ex]
a_{|t=0}=a_0.\end{array}\right.\leqno(T)
$$
We shall often use the  following result, the proof of which  may be found in e.g. \cite{BCD}.  
\begin{prop}\label{p:transport}  Let $1\leq r\leq\infty$ and 
$\sigma>0$ ($\sigma>-1$ if $\div v=0$). 
Let $a_0\in B^\s_{\infty,r},$
 $f\in L^1([0,T];B^\s_{\infty,r})$ and  $v$
 be a time dependent vector field in $\cC_b([0,T]\times\R^N)$
such that   
$$
\begin{array}{lllll}
\nabla v&\in&  L^1([0,T];
L^\infty)&\hbox{\rm if}&\sigma<1,\\[1.5ex]
\nabla v&\in& L^1([0,T];B^{\sigma -1}_{\infty,r})
&\hbox{\rm if}&\sigma>1,\quad
\hbox{\rm  or }\  \sigma=r=1.
\end{array}
$$
Then Equation $(T)$ has a unique solution  $a$ in 
\begin{itemize}
\item the space $\cC([0,T];B^{\sigma}_{\infty,r})$ if $r<\infty,$
\item the space $\Bigl(\bigcap_{\sigma'<\sigma} \cC([0,T];B^{\sigma'}_{\infty,\infty})\Bigr)
\bigcap  \cC_w([0,T];B^{\sigma}_{\infty,\infty})$ if $r=\infty.$
\end{itemize}
Moreover,  for all $t\in[0,T],$ we have
\begin{equation}\label{sanspertes1}
e^{-CV(t)}\|a(t)\|_{B^\sigma_{\infty,r}}\leq
\|a_0\|_{B^\sigma_{\infty,r}}+\int_0^t
e^{-CV(t')}
\|f(t')\|_{B^\sigma_{\infty,r}}\,dt'
\end{equation} 
$$\displaylines{
\mbox{with}\quad\!\! V'(t):=\left\{
\begin{array}{l}
\!\|\nabla v(t)\|_{L^\infty}\!\!\!\quad\mbox{if}\!\!\!
\quad\sigma<1,\\[1.5ex]
\!\|\nabla v(t)\|_{B^{\sigma-1}_{\infty,r}}\ \mbox{ if }\
\sigma>1,\quad\!\mbox{or }\ 
\sigma=r=1. 
\end{array}\right.\hfill} $$
 If $a=v$ then, for all $\sigma>0$ ($\sigma>-1$ if $\div v=0$),
    Estimate  \eqref{sanspertes1} holds with
$V'(t):=\|\nabla a(t)\|_{L^\infty}.$
\end{prop}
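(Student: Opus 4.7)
The plan is to follow the standard Littlewood--Paley / commutator approach. First, I would establish the a priori estimate \eqref{sanspertes1} for smooth solutions, and then recover existence by a Friedrichs-type regularization (truncating initial data, source term, and velocity via $S_n$, solving the resulting ODE-like problem in $L^2$, and passing to the limit using the uniform estimate). For the a priori estimate, the key step is to localize the transport equation in frequency: applying $\Delta_j$ to $(T)$ gives
\begin{equation*}
\partial_t \Delta_j a + v\cdot\nabla \Delta_j a = \Delta_j f + R_j, \qquad R_j := [v\cdot\nabla,\Delta_j] a.
\end{equation*}
Since $\Delta_j a$ solves a transport equation with right-hand side $\Delta_j f + R_j$ and with divergence (of $v$) that produces a bounded coefficient, a direct $L^\infty$ estimate along characteristics (or equivalently multiplication by $\mathrm{sgn}(\Delta_j a)|\Delta_j a|^{p-1}$ and $p\to\infty$) yields, for any $j\geq -1$,
\begin{equation*}
\|\Delta_j a(t)\|_{L^\infty} \leq \|\Delta_j a_0\|_{L^\infty} + \int_0^t \bigl( \|\Delta_j f(t')\|_{L^\infty} + \|R_j(t')\|_{L^\infty} + C\|\div v(t')\|_{L^\infty}\|\Delta_j a(t')\|_{L^\infty}\bigr)\, dt'.
\end{equation*}
Multiplying by $2^{j\sigma}$, taking the $\ell^r$ norm in $j$, and applying Grönwall gives \eqref{sanspertes1}, \emph{provided} we control the weighted $\ell^r$ norm of $\|R_j\|_{L^\infty}$ by $V'(t)\,\|a\|_{B^\sigma_{\infty,r}}$.

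The main obstacle — and the reason the regularity threshold $\sigma<1$ vs.\ $\sigma>1$ vs.\ $\sigma=r=1$ appears in the statement of $V'$ — is precisely the commutator estimate
\begin{equation*}
\bigl\|\bigl(2^{j\sigma}\|[v\cdot\nabla,\Delta_j]a\|_{L^\infty}\bigr)_j\bigr\|_{\ell^r} \leq C\, V'(t)\, \|a\|_{B^\sigma_{\infty,r}}.
\end{equation*}
To handle it, I would use Bony's decomposition \eqref{eq:bony} to split $v\cdot\nabla a$ into paraproduct and remainder pieces, and then analyse $[T_{v^k},\Delta_j]\partial_k a$, $[\Delta_j,T_{\partial_k a}]v^k$ (or rather $\Delta_j T_{\partial_k a}v^k - T_{\partial_k a}\Delta_j v^k$, treated with a second-order Taylor expansion of the convolution kernel of $\Delta_j$ giving a gain of $2^{-j}$ which compensates the loss of one derivative on $a$), and the remainder piece. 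When $\sigma<1$ the $L^\infty$ norm of $\nabla v$ suffices because $a$ is paralinearized; when $\sigma>1$ one must put a derivative on $v$ at the $B^{\sigma-1}_{\infty,r}$ level, since the paraproduct of $a$ against $\partial_k v^k$ forces $\sigma-1$ regularity on $\nabla v$; the borderline $\sigma=r=1$ is delicate because one sits exactly at the critical embedding $B^0_{\infty,1}\hookrightarrow L^\infty$, and one uses $r=1$ to sum the dyadic pieces without loss.

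Once the a priori estimate is in place, uniqueness follows by taking the difference $a_1-a_2$ of two solutions, which solves $(T)$ with zero source and zero data at regularity level one lower than $\sigma$ — hence the requirement $\sigma>0$ (resp.\ $\sigma>-1$ when $\div v=0$, which allows the above estimate at the level $\sigma-1\geq -1$): apply \eqref{sanspertes1} at exponent $\sigma-1$ and conclude $a_1\equiv a_2$. Finally for the time-continuity, I would first observe that the equation itself gives $\partial_t a \in L^1_T(B^{\sigma-1}_{\infty,r})$, hence $a\in \cC([0,T];B^{\sigma-1}_{\infty,r})$. When $r<\infty$, approximating $a_0$ and $f$ by smooth truncations and using the uniform bound \eqref{sanspertes1} together with Fatou to pass to the limit upgrades this to strong continuity in $B^\sigma_{\infty,r}$. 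When $r=\infty$, density fails and one only gets strong continuity in $B^{\sigma'}_{\infty,\infty}$ for every $\sigma'<\sigma$, together with weak-$\ast$ continuity in $B^\sigma_{\infty,\infty}$ as a bounded curve in the dual of a separable space. The particular case $a=v$ requires no change: under $\div v=0$ one gains the extra unit of regularity because the commutator $[v\cdot\nabla,\Delta_j]v$ can be bounded using only $\|\nabla v\|_{L^\infty}$, which is exactly the statement in the last line of the proposition.
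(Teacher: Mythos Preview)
The paper does not actually prove this proposition: it is stated as a tool and the proof is deferred to the reference \cite{BCD} (see the sentence introducing it). Your sketch is correct and is precisely the standard argument one finds there --- frequency localization via $\Delta_j$, the commutator estimate $\bigl\|\bigl(2^{j\sigma}\|[v\cdot\nabla,\Delta_j]a\|_{L^\infty}\bigr)_j\bigr\|_{\ell^r}\leq C\,V'(t)\,\|a\|_{B^\sigma_{\infty,r}}$ (which is exactly Lemma~2.100 of \cite{BCD}, invoked later in the paper around \eqref{eq:q}), Gronwall, and the Friedrichs regularization for existence --- so there is nothing to compare and nothing to correct.
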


Finally, we shall make an extensive use of energy  estimates  for the
following elliptic equation:
\begin{equation}\label{eq:elliptic}
-\div(a\nabla\Pi)=\div F\quad\hbox{in }\ \R^N
\end{equation}
where $a=a(x)$ is a given suitably smooth bounded function satisfying 
\begin{equation}\label{eq:ellipticity}
a_*:=\inf_{x\in\R^N}a(x)>0.
\end{equation}

We shall use  the following  result
based on Lax-Milgram's theorem (see the proof in e.g. \cite{D2}).  
\begin{lem}\label{l:laxmilgram}
For all vector field $F$ with coefficients in $L^2,$ there exists a tempered distribution $\Pi,$
unique up to  constant functions, 
such that  $\nabla\Pi\in L^2$ and  
Equation $\eqref{eq:elliptic}$ is satisfied. 
In addition, we have 
\begin{equation}\label{eq:el0}
a_* \|\nabla\Pi\|_{L^2}\leq\|F\|_{L^2}.
\end{equation}
\end{lem}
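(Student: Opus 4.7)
My plan is to set the problem in the homogeneous Sobolev space
$$
\dot H^1(\R^N):=\{\Pi\in L^1_{loc}(\R^N)/\R \;:\; \nabla\Pi\in L^2(\R^N)\},
$$
equipped with the norm $\|\Pi\|_{\dot H^1}:=\|\nabla\Pi\|_{L^2}$. This is a Hilbert space (the completeness is standard: a Cauchy sequence of gradients converges in $L^2$ to some vector field $G$; since any Cauchy sequence in $L^2$ of gradients is automatically a gradient, $G=\nabla\Pi$ for a unique class $\Pi\in L^1_{loc}/\R$). Working modulo constants is essential in dimension $N=2$, where no global $L^p$ control on $\Pi$ itself is available from $\nabla\Pi\in L^2$.

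On this Hilbert space I would introduce the bilinear form
$$
B(\Pi,\Phi):=\int_{\R^N}a\,\nabla\Pi\cdot\nabla\Phi\,dx
\quad\text{and the linear form}\quad
\ell(\Phi):=-\int_{\R^N}F\cdot\nabla\Phi\,dx.
$$
Continuity of $B$ follows from $a\in L^\infty$, namely $|B(\Pi,\Phi)|\leq a^*\|\nabla\Pi\|_{L^2}\|\nabla\Phi\|_{L^2}$, and coercivity from \eqref{eq:ellipticity}: $B(\Pi,\Pi)\geq a_*\|\nabla\Pi\|_{L^2}^2$. Continuity of $\ell$ is immediate from $F\in L^2$. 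The Lax--Milgram theorem then yields a unique $\Pi\in\dot H^1(\R^N)$ such that $B(\Pi,\Phi)=\ell(\Phi)$ for every $\Phi\in\dot H^1(\R^N)$; since $C_c^\infty(\R^N)\hookrightarrow\dot H^1(\R^N)$ (and its image is dense), testing against $\Phi\in C_c^\infty$ gives exactly the distributional formulation $-\div(a\nabla\Pi)=\div F$ in $\cS'(\R^N)$.

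For the quantitative estimate \eqref{eq:el0} I would simply test the weak formulation against $\Phi=\Pi$ itself, which is allowed since $\Pi\in\dot H^1$:
$$
a_*\|\nabla\Pi\|_{L^2}^2\leq B(\Pi,\Pi)=\ell(\Pi)=-\int_{\R^N}F\cdot\nabla\Pi\,dx\leq\|F\|_{L^2}\|\nabla\Pi\|_{L^2},
$$
and dividing by $\|\nabla\Pi\|_{L^2}$ yields $a_*\|\nabla\Pi\|_{L^2}\leq\|F\|_{L^2}$.

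The only genuinely delicate point is identifying the right functional setting so that the argument works uniformly in dimension $N\geq 2$: one must carefully quotient by constants (as $\dot H^1$ is not a space of distributions otherwise in low dimensions) and verify density of $C_c^\infty$ to pass from the weak formulation to the distributional equation \eqref{eq:elliptic}. Uniqueness up to additive constants is then automatic, since any two solutions with $L^2$ gradient differ by an element of $\dot H^1$ in the kernel of $B(\cdot,\cdot)$, which is trivial by coercivity.
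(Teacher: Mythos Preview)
Your proof is correct and follows exactly the approach the paper indicates: the paper does not give its own proof of this lemma, but explicitly states that it is ``based on Lax--Milgram's theorem (see the proof in e.g.\ \cite{D2}),'' which is precisely the variational argument in $\dot H^1(\R^N)$ that you carry out. Your care with the quotient by constants and the density of $C_c^\infty$ is appropriate, and the estimate \eqref{eq:el0} via testing against $\Pi$ itself is the standard way to extract it.
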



\section{Proof of Theorem \ref{th:L^2}}\label{s:L^2}

Obviously, one may extend the force term for any time so that it is not restrictive
to assume that $T_0=+\infty.$ 
Owing  to time reversibility of System \eqref{eq:ddeuler}, we can restrict ourselves to the problem of evolution  for positive times  only. 
For convenience we will assume $r<\infty$; for treating the case $r=\infty$, it is enough to replace the strong topology by the weak
topology, whenever regularity up to index $s$ is involved.

We will not work on System (\ref{eq:ddeuler}) directly, but rather on
\begin{equation}\label{eq:ddeuler_a}
\left\{\begin{array}{l}
\d_t a\,+\,u\cdot\nabla a\,=\,0\\[1ex]
\d_tu\,+\,u\cdot\nabla u\,+\,a\nabla\Pi\,=\, f\\[1ex]
-\,\div(a\nabla\Pi)\,=\,\div\left(u\cdot\nabla\mc{P}u\,-\,f\right)\,,
\end{array}\right.
\end{equation}
where we have set $a:=1/\rho$. 
\smallbreak

The equivalence between (\ref{eq:ddeuler}) and (\ref{eq:ddeuler_a}) is given in the
following statement (see \cite{D2}).

\begin{lem} \label{l:syst_a}
 Let $u$ be a vector field with coefficients in $\cC^1([0,T]\times\mbb{R}^N)$ and such that $\mc{Q}u\in \cC^1([0,T];L^2)$.
Suppose also that $\nabla\Pi\in\cC([0,T];L^2)$.
Finally, let $\rho$ be a continuous function on $[0,T]\times\mbb{R}^N$ such that 
\begin{equation}\label{eq:ldv}
0<\rho_*\leq\rho\leq\rho^*.
\end{equation} 
Let $a:=1/\rho$.
If $\div\:u(0,\cdot)\equiv0$ in $\mbb{R}^N$ then $(\rho,u,\nabla\Pi)$ is a solution to \eqref{eq:ddeuler} if and only if
$(a,u,\nabla\Pi)$ is a solution to \eqref{eq:ddeuler_a}.
\end{lem}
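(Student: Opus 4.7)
The forward implication is an immediate computation. Assuming $(\rho,u,\nabla\Pi)$ solves \eqref{eq:ddeuler}, the bound \eqref{eq:ldv} makes $a:=1/\rho$ well-defined, and the chain rule gives $\d_t a+u\cdot\nabla a=-\rho^{-2}(\d_t\rho+u\cdot\nabla\rho)=0$. Dividing the momentum equation by $\rho$ yields the second equation of \eqref{eq:ddeuler_a}; taking its divergence and using $\div u=0$, so that $\cP u=u$, produces the third one.

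For the converse, given a solution $(a,u,\nabla\Pi)$ of \eqref{eq:ddeuler_a} with $\div u|_{t=0}=0$, set $\rho:=1/a$; this is legitimate since $a$ inherits a two-sided positive bound from \eqref{eq:ldv}. The chain rule turns the first equation of \eqref{eq:ddeuler_a} into the transport equation for $\rho$, and multiplying the second by $\rho$ recovers the momentum equation of \eqref{eq:ddeuler}. The only nontrivial point is the propagation in time of the divergence-free constraint. To this end, apply the projector $\cQ$ to the momentum equation of \eqref{eq:ddeuler_a}: the third equation of \eqref{eq:ddeuler_a} asserts exactly that the vector field $a\nabla\Pi+u\cdot\nabla\cP u-f$ is divergence-free, hence annihilated by $\cQ$, so after subtracting one is left with
\[
\d_t\cQ u\,+\,\cQ\bigl(u\cdot\nabla\cQ u\bigr)\,=\,0.
\]

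A standard $L^2$ energy estimate, which is meaningful thanks to $\cQ u\in\cC^1([0,T];L^2)$ and to the $L^2$-selfadjointness of $\cQ$, then yields after an integration by parts in the transport term
\[
\frac{d}{dt}\|\cQ u(t)\|_{L^2}^2\,=\,\int_{\R^N}\div u\,|\cQ u|^2\,dx\,\leq\,\|\div u(t)\|_{L^\infty}\,\|\cQ u(t)\|_{L^2}^2,
\]
where the pointwise $\cC^1$ regularity of $u$ controls the last right-hand side on any compact time interval. Since $\cQ u(0,\cdot)=0$, Gronwall's lemma forces $\cQ u\equiv0$, whence $\div u=\div\cQ u\equiv0$ and the original system \eqref{eq:ddeuler} is recovered. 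The main delicate step is precisely this final energy estimate: one must ensure that $u\cdot\nabla\cQ u$ is meaningful in $L^2$, that $\cQ$ commutes appropriately with the time derivative, and that the integration by parts is justified, which is where the interplay between the pointwise regularity of $u$ and the $L^2$ control of $\cQ u$ provided by the hypotheses plays a decisive role.
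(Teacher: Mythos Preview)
Your proof is correct and follows what is the standard route for this lemma. The paper itself does not give a proof here: it simply cites \cite{D2} for the argument. Your derivation of the evolution equation $\d_t\cQ u+\cQ(u\cdot\nabla\cQ u)=0$ from the third line of \eqref{eq:ddeuler_a} (via $\cQ\bigl(a\nabla\Pi+u\cdot\nabla\cP u-f\bigr)=0$) is exactly the mechanism at work in \cite{D2}, and the closing $L^2$ energy/Gronwall argument is the expected one. Your final paragraph correctly flags where care is needed---namely in justifying the pairing $\langle u\cdot\nabla\cQ u,\cQ u\rangle$ and the integration by parts---though in a fully written-out proof one would typically dispose of this either by a mollification argument or by rewriting the transport term in divergence form $u\cdot\nabla\cQ u=\div(u\otimes\cQ u)-(\div u)\,\cQ u$, which makes the duality pairing with $\cQ u\in L^2$ and the cancellation manifest under the stated hypotheses.
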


We now come to the plan of this section.
First of all, we shall  prove a priori estimates for suitably smooth 
solutions of \eqref{eq:ddeuler} or \eqref{eq:ddeuler_a}. 
Even though those estimates are not needed for proving  Theorem \ref{th:L^2}, 
they will be most helpful to get the existence.
As a matter of fact, the construction of solutions which will be proposed in 
the next subsection amounts to solving inductively a sequence of \emph{linear}
equations. The estimates for those approximate solutions turn out
to be the same as those  for the true solutions. 
In the last two subsections, we shall concentrate on the proof of the uniqueness
part of  Theorem \ref{th:L^2} and of the continuation criterion stated in 
Theorem \ref{th:cont} (up to the endpoint case $s=r=1$ which will be
studied in the next section).

\subsection{A priori estimates}

Let $(a,u,\nabla\Pi)$ be a suitably smooth solution of System (\ref{eq:ddeuler_a}) with the required regularity properties.
In this subsection, we show that on a suitably small time interval (the length
of which depends only on the norms of the data), the norm of $(a,u,\nabla\Pi)$
may be bounded in terms of the data. 

Recall that according to  Proposition \ref{p:comp} the quantities  $\|a\|_{B^s_{\infty,r}}$ and $\|\rho\|_{B^s_{\infty,r}}$ are equivalent under hypothesis \eqref{eq:ldv}. 
This fact will be used repeatedly in what follows. 


\subsubsection{Estimates for the density and the velocity field}

Let us assume for a while that $\div u=0.$ 
Then $(\rho,u,\nabla\Pi)$ satisfies System (\ref{eq:ddeuler})
and the following energy equality  holds true:
\begin{equation} \label{eq:cons_en}
 \|\sqrt{\rho(t)}\,u(t)\|^2_{L^2}\,=\,\|\sqrt{\rho_0}\,u_0\|^2_{L^2}\,+\,2\int^t_0\left(\int_{\mbb{R}^N}\rho\,f\cdot u\,dx\right)d\tau\,.
\end{equation}

Moreover, from the equation satisfied by the density, we have that $\rho(t,x)=\rho_0\left(\psi^{-1}_t(x)\right)$, where $\psi$ is the flow associated
with $u$; so, $\rho$  satisfies \eqref{eq:ldv}.
Hence, from relation (\ref{eq:cons_en}), we obtain the control of the $L^2$ norm of the velocity field: for all $t\in[0,T_0]$, we have, for some constant $C$ depending only on $\rho_*$ and $\rho^*,$ 
\begin{equation} \label{est:u_L^2}
 \|u(t)\|_{L^2}\,\leq\,C\left(\|u_0\|_{L^2}\,+\,\int^t_0\,\|f(\tau)\|_{L^2}\,d\tau\right)\,.
\end{equation}

{}Next, in the general case where $\div u$ need not be $0,$   applying Proposition \ref{p:transport} yields the following estimates:
\begin{eqnarray}
 \|a(t)\|_{B^{s}_{\infty,r}} & \leq & \|a_0\|_{B^{s}_{\infty,r}}\,
\exp\left(C\int^t_0\|u\|_{B^{s}_{\infty,r}}\,d\tau\right) \label{est:da_Besov} \\
 \|u(t)\|_{B^{s}_{\infty,r}} & \leq & \exp\left(C\int^t_0\|u\|_{B^{s}_{\infty,r}}\,d\tau\right)\,\cdot \,
\biggl(\|u_0\|_{B^s_{\infty,r}}\,+  \label{est:u_Besov} \\
& & +\,\int^t_0e^{-C\int^\tau_0\|u\|_{B^{s}_{\infty,r}}\,d\tau'}
\left(\|f\|_{B^{s}_{\infty,r}}+\|a\|_{B^{s}_{\infty,r}}\|\nabla\Pi\|_{B^s_{\infty,r}}\right)\,d\tau\biggr)\,, \nonumber
\end{eqnarray}
where, in the last line, we have used the fact that $B^s_{\infty,r}$, under  our hypothesis, is an algebra.

\begin{rem} \label{r:a-rho}
Of course,  as $\rho$ and $a$ verify the same equations, they satisfy the same estimates.
\end{rem}

\subsubsection{Estimates for the pressure term}

Let us  use
 the low frequency localization operator $\Delta_{-1}$
 to separate $\nabla\Pi$ into low and high frequencies. We get
$$
\|\nabla\Pi\|_{B^s_{\infty,r}}\,\leq\,\|\Delta_{-1}\nabla\Pi\|_{B^s_{\infty,r}}\,+\,
\|(\mbox{Id}-\Delta_{-1})\nabla\Pi\|_{B^{s}_{\infty,r}}\,.
$$

Observe that $(\Id-\Delta_{-1})\nabla\Pi$ may be computed from 
$\Delta\Pi$ by means of a homogeneous multiplier of degree $-1$ in the sense of 
Proposition \ref{p:CZ}.
Hence
\begin{equation}\label{eq:Pi1}
\|(\mbox{Id}-\Delta_{-1})\nabla\Pi\|_{B^{s}_{\infty,r}}\,\leq\,C\,\|\Delta\Pi\|_{B^{s-1}_{\infty,r}}\,.
\end{equation}

For the low frequencies term, however, the above inequality fails. Now, remembering the definition of $\|\cdot\|_{B^{s}_{\infty,r}}$ and the spectral properties of operator $\Delta_{-1}$, one has that
$$
\|\Delta_{-1}\nabla\Pi\|_{B^{s}_{\infty,r}}\,\leq\,C\,\|\Delta_{-1}\nabla\Pi\|_{L^\infty}\,;
$$
at this point, Bernstein's inequality allows us to  write that
$$
\|\Delta_{-1}\nabla\Pi\|_{B^{s}_{\infty,r}}\,\leq\,C\,\left\|\nabla\Pi\right\|_{L^2}\,.
$$

So  putting together \eqref{eq:Pi1} and the above inequality, we obtain
\begin{equation}\label{eq:Pi}
\|\nabla\Pi\|_{B^s_{\infty,r}}\,\leq\,C\,\left(\|\nabla\Pi\|_{L^2}\,+\,\|\Delta\Pi\|_{B^{s-1}_{\infty,r}}\right)\,.
\end{equation}

First of all, let us see how to control $\|\Delta\Pi\|_{B^{s-1}_{\infty,r}}$.
Recall the third equation of (\ref{eq:ddeuler_a}):
$$
\div\left(a\,\nabla\Pi\right)\,=\,F\quad\hbox{with}\quad
F\,:=\,\div\,(f\,-\,u\cdot\nabla\mc{P}u).$$
Developing the left-hand side of this equation, we obtain
\begin{equation} \label{eq:Lapl_Pi}
 \Delta\Pi\,=\,-\nabla(\log a)\cdot\nabla\Pi\,+\,\frac{F}{a}\cdotp
\end{equation}

Let us consider the first term of the right-hand side of the previous equation.

If $s>1$ then one may use that  $B^{s-1}_{\infty,r}$ is an algebra
and bound  $\|\nabla(\log a)\|_{B^{s-1}_{\infty,r}}$ with
$\|\nabla a\|_{B^{s-1}_{\infty,r}}$ according  to Proposition \ref{p:comp}; we get
$$
\|\nabla(\log a)\cdot\nabla\Pi\|_{B^{s-1}_{\infty,r}}\leq
C\|\nabla a\|_{B^{s-1}_{\infty,r}}\|\nabla\Pi\|_{B^{s-1}_{\infty,r}}.
$$
Now, as  $L^2\hookrightarrow B^{-\frac{N}{2}}_{\infty,\infty}$ (see Corollary \ref{c:embed})
and $B^{s-1}_{\infty,r}$ is an intermediate space between 
$B^{-\frac{N}{2}}_{\infty,\infty}$ and $B^s_{\infty,r},$
standard interpolation inequalities  (see e.g. \cite{BCD}, Chap. 2) ensure that
\begin{equation}\label{est:interpo}
\|\nabla\Pi\|_{B^{s-1}_{\infty,r}}\leq C\|\nabla\Pi\|_{L^2}^\theta
\|\nabla\Pi\|_{B^s_{\infty,r}}^{1-\theta}\quad\hbox{for some }\ 
\theta\in]0,1[.
\end{equation}
Plugging this inequality in \eqref{eq:Pi} and  applying Young's
inequality, we finally obtain
\begin{equation} \label{est:Pi_Bes}
\|\nabla\Pi\|_{B^s_{\infty,r}}\,\leq\,C\,\left(\left(1+\|\nabla a\|^\gamma_{B^{s-1}_{\infty,r}}\right)
\|\nabla\Pi\|_{L^2}\,+\,\left\|\frac{F}{a}\right\|_{B^{s-1}_{\infty,r}}\right)\,,
\end{equation}
where the exponent $\gamma$ depends only on the space dimension $N$ and on $s.$ 
\smallbreak

In the limit case $s=r=1,$ the space
 $B^{s-1}_{\infty,1}$ is no more an algebra and we have to 
 modify the above argument:
  we use the Bony  decomposition \eqref{eq:bony} to write
$$
\nabla(\log a)\cdot\nabla\Pi\,=\,T_{\nabla(\log a)}\nabla\Pi\,+\,T_{\nabla\Pi}\nabla(\log a)\,+\,R(\nabla(\log a),\nabla\Pi)\,.
$$

To estimate first and second term, we can apply Propositions \ref{p:op} and \ref{p:comp}:  we get
\begin{eqnarray} \label{est:T_log-Pi} 
 \|T_{\nabla(\log a)}\nabla\Pi\|_{B^0_{\infty,1}} & \leq & 
 C\,\|\nabla(\log a)\|_{L^\infty}\,\|\nabla\Pi\|_{B^0_{\infty,1}}\\
& \leq & C\,\|\nabla a\|_{L^\infty}\|\nabla\Pi\|_{B^0_{\infty,1}}, \nonumber \\[3ex] \label{est:T_Pi-log} 
\|T_{\nabla\Pi}\nabla(\log a)\|_{B^0_{\infty,1}} & \leq & C\,\|\nabla\Pi\|_{L^\infty}\,\|\nabla(\log a)\|_{B^0_{\infty,1}}\\& \leq & C\,\|\nabla\Pi\|_{L^\infty}\,\|\nabla a\|_{B^0_{\infty,1}}\,. \nonumber
\end{eqnarray}

 A similar inequality is no more true for the remainder term, though. 
 However, one may use that $\nabla\Pi$ is in fact more regular: it belongs to
 $B^{\frac12}_{\infty,1}$ for instance. Hence, using  the embedding $B^{\frac12}_{\infty,1}\hookrightarrow B^0_{\infty,1}$ and Proposition \ref{p:op}, 
 we can write
$$ \begin{array}{lll}
 \|R(\nabla(\log a),\nabla\Pi)\|_{B^0_{\infty,1}} & \leq &
  C\,\|\nabla(\log a)\|_{L^\infty}\,\|\nabla\Pi\|_{B^{\frac12}_{\infty,1}}, \\[1ex]
 & \leq & C\,\|\nabla a\|_{L^\infty}\,\|\nabla\Pi\|_{B^{\frac12}_{\infty,1}}.
 \end{array}
 $$
 
 Putting the above inequality together with \eqref{est:T_log-Pi}
 and \eqref{est:T_Pi-log}, and using that $B^0_{\infty,1}\hookrightarrow L^\infty,$
  we conclude that
 $$
 \|\nabla(\log a)\cdot \nabla\Pi\|_{B^0_{\infty,1}}\leq C\|\nabla a\|_{B^0_{\infty,1}}\|\nabla\Pi\|_{B^{\frac12}_{\infty,1}}.
 $$
 
 Now, using interpolation between Besov spaces, as done for proving
 \eqref{est:interpo}, we get for some suitable $\theta\in]0,1[,$
 $$
 \|\nabla(\log a)\cdot\nabla\Pi\|_{B^{0}_{\infty,1}}\,\leq\,C\,\|\nabla a\|_{B^0_{\infty,1}}\,
\|\nabla\Pi\|^{1-\theta}_{B^{1}_{\infty,1}}\,\|\nabla\Pi\|^{\theta}_{L^2}\,.
$$
Hence   $\|\nabla\Pi\|_{B^1_{\infty,1}}$ satisfies Inequality \eqref{est:Pi_Bes}
for some convenient $\gamma>0.$
\medbreak

Next, let us bound
 the last term of (\ref{eq:Lapl_Pi}). By virtue of Bony's decomposition \eqref{eq:bony}, we have
$$
\frac{F}{a}\,=\,\rho\,F\,=\,T_\rho F\,+\,T_F \rho\,+\,R(\rho,F)\,;
$$
so from Proposition \ref{p:op} we infer that
 \begin{itemize}
 \item $\|T_\rho F\|_{B^{s-1}_{\infty,r}}\,\leq\,C\,\rho^*\,\|F\|_{B^{s-1}_{\infty,r}}$,
 \item $\|T_F \rho\|_{B^{s-1}_{\infty,r}}\,\leq\,C\,\|F\|_{B^{-1}_{\infty,\infty}}\|\rho\|_{B^{s}_{\infty,r}}\,\leq\,
C\,\|F\|_{B^{s-1}_{\infty,r}}\|\rho\|_{B^{s}_{\infty,r}}\,$,
 \item $\|R(\rho,F)\|_{B^{s-1}_{\infty,r}}\,\leq\,\|R(\rho,F)\|_{B^s_{\infty,r}}\,\leq\,C\,\|\rho\|_{B^{1}_{\infty,\infty}}
\|F\|_{B^{s-1}_{\infty,r}}\,\leq\,C\,\|\rho\|_{B^s_{\infty,r}}\|F\|_{B^{s-1}_{\infty,r}}\,$.
\end{itemize}
It is  clear that $\|\div f\|_{B^{s-1}_{\infty,r}}$ can be controlled by $\|f\|_{B^s_{\infty,r}}.$
For the second term of $F$ we have
to take advantage, once again, of Bony's decomposition \eqref{eq:bony} as follows:
$$
\div\,(u\cdot\nabla\mc{P}u)\,=\,\sum_{i,j}\partial_iu^j\,\partial_j\left(\mc{P}u\right)^i\,=\,\sum_{i,j}\left(T_{\partial_iu^j}
\partial_j\mc{P}u^i\,+\,T_{\partial_j\mc{P}u^i}\partial_iu^j\,+\,\partial_iR(u^j,\partial_j\mc{P}u^i)\right)\,,
$$
where in the last equality  we have used also the fact that $\div\cP  u=0.$
Now, for all $i$ and $j$ we have:
\begin{eqnarray*}
 \left\|T_{\partial_iu^j}\partial_j\mc{P}u^i\right\|_{B^{s-1}_{\infty,r}} & \leq & C\,\|\nabla u\|_{L^\infty}\,
\|\nabla\mc{P}u\|_{B^{s-1}_{\infty,r}} \\
 \left\|T_{\partial_j\mc{P}u^i}\partial_iu^j\right\|_{B^{s-1}_{\infty,r}} & \leq & C\,\|\nabla\mc{P}u\|_{L^\infty}
\,\|\nabla u\|_{B^{s-1}_{\infty,r}} \\
 \left\|\partial_iR(u^j,\partial_j\mc{P}u^i)\right\|_{B^{s-1}_{\infty,r}} & \leq & \left\|R(u^j,\partial_j\mc{P}u^i)\right\|_{B^s_{\infty,r}} \\
& \leq & C\,\|u\|_{B^s_{\infty,r}}\,\|\nabla\mc{P}u\|_{B^0_{\infty,\infty}} \\
& \leq & C\,\|u\|_{B^s_{\infty,r}}\,\|\nabla\mc{P}u\|_{B^{s-1}_{\infty,r}}\,.
\end{eqnarray*}
Because, by embedding, 
\begin{equation}\label{est:emb}
\|\nabla\mc{P}u\|_{L^\infty}\,\leq\,C\,\|\nabla\mc{P}u\|_{B^{s-1}_{\infty,r}},
\end{equation}
we thus have
$$
\|\div(u\cdot\nabla\cP u)\|_{B^{s-1}_{\infty,r}}\leq C\|u\|_{B^s_{\infty,r}}
\|\nabla\cP u\|_{B^{s-1}_{\infty,r}}.
$$
In order to bound $\cP u,$ let us decompose it into low and high frequencies as follows:
$$
\cP u=\Delta_{-1}\cP u+(\Id-\Delta_{-1})\cP u.
$$
On the one hand, combining Bernstein's inequality and the
fact that $\cP$ is an orthogonal projector over $L^2$ yields 
$$
\left\|\Delta_{-1}\nabla\mc{P}u\right\|_{L^\infty}\,\leq\,C\,\left\|u\right\|_{L^2}\,.
$$
On the other hand, according to Remark \ref{r:CZ}, one may write that
$$
\|(\Id-\Delta_{-1})\cP u\|_{B^s_{\infty,r}}\leq C\| u\|_{B^s_{\infty,r}}.
$$
Therefore we get
\begin{equation}\label{eq:Pu}
\left\|\nabla\mc{P}u\right\|_{B^{s-1}_{\infty,r}}\,\leq\,C\,\|u\|_{B^s_{\infty,r}\cap L^2}\,,
\end{equation}
from which it follows that
\begin{equation}\label{eq:Fa}
\left\|\frac{F}{a}\right\|_{B^{s-1}_{\infty,r}}\,\leq\,C\,\|a\|_{B^{s}_{\infty,r}}\,\left(\|f\|_{B^{s}_{\infty,r}}+
\|u\|^2_{B^{s}_{\infty,r}\cap L^2}\right)\,.
\end{equation}

It remains us to control $\|\nabla\Pi\|_{L^2}$. Keeping in mind Lemma \ref{l:laxmilgram}, from the third equation of System (\ref{eq:ddeuler_a}) and Inequalities \eqref{est:emb}--\eqref{eq:Pu},
we immediately get
\begin{eqnarray*}
a_*\,\|\nabla\Pi\|_{L^2} & \leq & \|f\|_{L^2}\,+\,\|u\cdot\nabla\mc{P}u\|_{L^2} \\
 & \leq & \|f\|_{L^2}\,+\,\|u\|_{L^2}\,\|\nabla\mc{P}u\|_{L^\infty} \\
& \leq & \|f\|_{L^2}\,+\,C\|u\|^2_{B^s_{\infty,r}\cap L^2}\,.
\end{eqnarray*}

\smallbreak
Putting all these inequalities together, we finally obtain
\begin{eqnarray}
 \|\nabla\Pi\|_{L^1_t(L^2)} & \leq & C\left(\|f\|_{L^1_t(L^2)}\,+\,\int^t_0\|u\|^2_{B^s_{\infty,r}\cap L^2}\,d\tau
\right) \label{est:Pi_t_L2} \\
\|\nabla\Pi\|_{L^1_t(B^s_{\infty,r})} & \leq & C\left(\left(1+\|\nabla a\|^\gamma_{L^\infty_t(B^{s-1}_{\infty,r})}\right)
\|\nabla\Pi\|_{L^1_t(L^2)}\,+\right. \label{est:Pi_t_B} \\
 & & \qquad\left.+\,\|a\|_{L^\infty_t(B^{s}_{\infty,r})}
\left(\|f\|_{L^1_t(B^{s}_{\infty,r})}+\int^t_0\|u\|^2_{B^{s}_{\infty,r}\cap L^2}d\tau\right)\right)\,. \nonumber
\end{eqnarray}

\subsubsection{Final estimate}

First of all, let us fix $T>0$ so small as to satisfy
\begin{equation}\label{eq:smallu}
\exp\biggl(C\int^T_0\|u\|_{B^{s}_{\infty,r}}\,dt\biggr)\,\leq\,2\,,
\end{equation}
a fact that is always possible because of the continuity of $u$ with respect the time variable.
\smallbreak
Then, setting
\begin{eqnarray*}
U(t) & := & \|u(t)\|_{L^2\cap B^{s}_{\infty,r}}=\|u(t)\|_{L^2}+\|u(t)\|_{B^{s}_{\infty,r}} \\
U_0(t) & := & \|u_0\|_{L^2\cap B^{s}_{\infty,r}}\,+\,\int^t_0\|f\|_{L^2\cap B^{s}_{\infty,r}}\,d\tau\,
\end{eqnarray*}
and combining estimates (\ref{est:u_L^2}), (\ref{est:da_Besov}), (\ref{est:u_Besov}), (\ref{est:Pi_t_L2}) and (\ref{est:Pi_t_B}),
 we get
\begin{equation}
U(t)\,\leq\,C\left(U_0(t)\,+\,\int^t_0U^2(\tau)\,d\tau\right)\quad\hbox{for all }\  t\in[0,T],\label{est:U(t)}
\end{equation}
where the constant $C$ depends only   on $s,$ $N,$ $\|a_0\|_{B^s_{\infty,r}}$, $a_*$ and $a^*$.

So, taking $T$ small enough  and changing once more the multiplying constant if needed, 
a standard bootstrap argument allows to show that
$$
U(t)\,\leq\,C\,U_0(t)\qquad\qquad\qquad\qquad\forall\,t\,\in\,[0,T]\,.
$$


\subsection{Existence of a solution to \eqref{eq:ddeuler_a}}
We proceed in  two  steps: first we  construct
inductively a sequence of smooth global approximate solutions, defined as solutions of a linear system, and then we  prove the convergence
of this sequence to a solution of the nonlinear system \eqref{eq:ddeuler_a} with the required property.
Recall that to simplify the presentation we have assumed that $T_0=+\infty$
and that we focus on the evolution for positive times.

\subsubsection{Construction of the sequence of approximate solutions}

First, we smooth out the data (by convolution for instance) so as to get
a sequence $(a_0^n,u_0^n,f^n)_{n\in\N}$
 such that  $u_0^n\in H^\infty,$
$f^n\in\cC(\R_+;H^\infty),$
 $a_0^n$ and its derivatives at any order are bounded and 
 \begin{equation}\label{eq:ldv0}
a_*\leq a_0^n\leq a^*,
\end{equation}
with in addition
\begin{itemize}
\item $a_0^n\rightarrow a_0$ in $B^s_{\infty,r},$
\item $u_0^n\rightarrow u_0$ in $L^2\cap B^s_{\infty,r},$
\item $f^n\rightarrow f$ in $\cC(\R_+;L^2)\cap L^1(\R_+;B^s_{\infty,r}).$
\end{itemize}

In order to construct  a sequence of smooth approximate solutions, we  argue by induction. We first set $a^0=a_0^0$, $u^0=u_0^0$ and $\nabla\Pi^0=0$.

Now, suppose we have already built a smooth approximate
solution $(a^n,u^n,\nabla\Pi^n)$ over $\R_+\times\R^N$
with $a^n$ satisfying \eqref{eq:ldv}.  
In order to  construct the $(n+1)$-th  term of the sequence, 
we first 
define $a^{n+1}$ to be the solution of the linear transport equation
$$
\partial_t a^{n+1}\,+\,u^n\cdot\nabla a^{n+1}\,=\,0\,
$$
with initial datum $a^{n+1}|_{t=0}=a_0^{n+1}.$

Given that $u^n$ is smooth, 
its flow is smooth too so that 
 $a^{n+1}(t,x)=a_0^{n+1}\left((\psi^n_t)^{-1}(x)\right)$, where $\psi^n_t$ is the flow at time $t.$
 Note that $\psi^n_t$ is a smooth diffeomorphism on the whole  $\mbb{R}^N$. {}From  this fact, we  gather that $a^{n+1}$ is smooth and satisfies \eqref{eq:ldv}. 
 Furthermore, by virtue of Proposition \ref{p:transport}, 
\begin{equation} \label{est:a^n+1_B}
 \|a^{n+1}(t)\|_{B^{s}_{\infty,r}}\,\leq\,\|a_0^{n+1}\|_{B^{s}_{\infty,r}}\,
\exp\left(C\int^t_0\|u^n\|_{B^s_{\infty,r}}\,d\tau\right)\,.
\end{equation}

Note that the reciprocal function $\rho^{n+1}$ of $a^{n+1}$
satisfies  $\rho^{n+1}(t,x)=\rho^{n+1}_0\left(\left(\psi^n_t\right)^{-1}(x)\right)$,
together with \eqref{eq:ldv} and  the equation
$$
\partial_t\rho^{n+1}\,+\,u^n\cdot\nabla\rho^{n+1}\,=\,0\,.
$$
Hence it also fulfills Inequality \eqref{est:a^n+1_B} up to a change of $a_0^{n+1}$ in $\rho_0^{n+1}.$

\smallbreak
At this point, we define $u^{n+1}$ as the unique  smooth solution of the transport equation
$$
\left\{ \begin{array}{l}
  \partial_tu^{n+1}\,+\,u^n\cdot\nabla u^{n+1}\,=\,f^{n+1}\,-\,a^{n+1}\,\nabla\Pi^n \\[1ex]
 u^{n+1}|_{t=0}\,=\,u_0^{n+1}\,.
\end{array} \right.
$$

Since the right-hand side belongs to $L^1_{loc}(\mbb{R}_+;L^2)$, from classical results
for transport equation we get that $u^{n+1}\in\cC(\mbb{R}_+;L^2)$. Besides, as $\rho^n=(a^n)^{-1}$ for all $n$, if we differentiate with respect
to time the product $\sqrt{\rho^{n+1}}u^{n+1}$ and take the scalar product with $u^{n+1}$, we obtain
$$
\frac{1}{2}\frac{d}{dt}\left\|\sqrt{\rho^{n+1}}u^{n+1}\right\|^2_{L^2}\,=\,\frac{1}{2}\int\rho^{n+1}|u^{n+1}|^2\,\div u^n\,dx\,+\,
\int\rho^{n+1}u^{n+1}\cdot f^{n+1}\,dx\,-\,\int\nabla\Pi^n\cdot u^{n+1}\,dx.
$$
Observe that $u^n$ and $u^{n+1}$ need not be divergence free; nevertheless one may control
$\|\div u^n\|_{L^\infty}$ with $\|u^n\|_{B^s_{\infty,r}}$. So, from the previous equality, applying Gronwall's Lemma, it is easy to see that
\begin{equation} \label{est:u^n+1_L^2}
\left\|\sqrt{\rho^{n+1}(t)}u^{n+1}(t)\right\|_{L^2}\leq\left\|\sqrt{\rho^{n+1}_0}u^{n+1}_0\right\|_{L^2}
+C\int^t_0\left(\|f^{n+1}\|_{L^2}\,+\,
\|\nabla\Pi^n\|_{L^2}\,+\|u^n\|_{B^s_{\infty,r}}\right)d\tau\,.
\end{equation}

\smallbreak
Finally, we have to define the approximate pressure $\Pi^{n+1}$. We have already proved that $a^{n+1}$ satisfies the ellipticity hypothesis \eqref{eq:ldv}; so
we can consider the elliptic equation
$$
\div\left(a^{n+1}\,\nabla\Pi^{n+1}\right)\,=\,\div\left(f^{n+1}\,-\,u^{n+1}\cdot\nabla\mc{P}u^{n+1}\right)\,.
$$
As $f^{n+1}$ and $u^{n+1}$ are  in $\cC(\R_+;H^\infty),$
the classical theory for elliptic equations ensures
that the above equation has a unique solution $\nabla\Pi^{n+1}$ in $\cC(\mbb{R}_+;H^\infty).$
In addition, going along the lines of the proof of \eqref{est:Pi_t_L2}, we get 
\begin{equation} \label{est:Pi^n+1_L^2}
\left\|\nabla\Pi^{n+1}\right\|_{L^1_t(L^2)}\,\leq\, C\left(\|f^{n+1}\|_{L^1_t(L^2)}\,+\,\int^t_0
\|u^{n+1}\|_{B^s_{\infty,r}\cap L^2}^2\,d\tau\right)\,.
\end{equation}

Of course, by embedding, we have  $\nabla\Pi^{n+1}\in\cC(\mbb{R}_+;B^s_{\infty,r}).$
Hence, arguing as for proving \eqref{est:Pi_t_B}, we get 
\begin{eqnarray} \label{est:Pi^n+1_B}
\left\|\nabla\Pi^{n+1}\right\|_{L^1_t(B^s_{\infty,r})} & \leq &  C\|a^{n+1}\|_{L_t^\infty(B^s_{\infty,r})}
\left(\|f^{n+1}\|_{L^1_t(B^s_{\infty,r})}+
\int^t_0\|u^{n+1}\|^2_{B^s_{\infty,r}\cap L^2}d\tau\right) \\
& & \qquad+\,
C\left(1+\|\nabla a^{n+1}\|^\gamma_{L^\infty_t(B^{s-1}_{\infty,r})}\right)\|\nabla\Pi^{n+1}\|_{L^1_t(L^2)}. \nonumber
\end{eqnarray}

Note also that the norms of the approximate data that we use in \eqref{est:a^n+1_B},
\eqref{est:u^n+1_L^2}, \eqref{est:Pi^n+1_L^2} and \eqref{est:Pi^n+1_B}
may be bounded \emph{independently of $n$}. 
Therefore, repeating the arguments leading to \eqref{est:U(t)} and to Theorem 1 of \cite{D2}, 
one may find some positive time $T$ which may depend  on  
$\|\rho_0\|_{B^s_{\infty,r}}$, $\|u_0\|_{B^s_{\infty,r}\cap L^2}$
and $\|f\|_{L^1([0,T];B^s_{\infty,r}\cap L^2)}$ but is \emph{independent of $n$}
such that \begin{itemize}
\item $(a^n)_{n\in\N}$ is bounded in $L^\infty([0,T];B^s_{\infty,r}),$
\item $(u^n)_{n\in\N}$ is bounded in $L^\infty([0,T];B^s_{\infty,r}\cap L^2),$
\item $(\nabla\Pi^n)_{n\in\N}$ is bounded in $L^1([0,T];B^s_{\infty,r})\cap L^\infty([0,T];L^2).$
\end{itemize}


\subsubsection{Convergence of the sequence}

Let us observe that the function  $\wtilde{a}^n:\,=\,a^n-a^n_0$  satisfies
$$
\left\{\begin{array}{lcl}
        \partial_t\wtilde{a}^n & = & -\,u^{n-1}\cdot\nabla a^n \\[1ex]
       \wtilde{a}^n|_{t=0} & = & 0\,.
       \end{array}
\right.
$$
Because   $u^{n-1}\in\cC([0,T];L^2)$ and $\nabla a^n\in\cC_b([0,T]\times\R^N),$
it immediately follows that $\wtilde{a}^n\in\cC^1([0,T];L^2).$
Now we want to prove that the sequence $(\wtilde{a}^n,u^n,\nabla\Pi^n)_{n\in\N}$, built in this way, is a Cauchy sequence in $\cC([0,T];L^2)$.
So let us  define
\begin{eqnarray*}
  \delta\!  a^n & := & {a}^{n+1}\,-\,{a}^n\,, \\
  \delta\! \tilde a^n & := & \wtilde{a}^{n+1}\,-\,\wtilde{a}^n \; =\; \delta\! a^n\,-\,\delta\! a^n_0\,, \\
 \delta\!\rho^n&:=&\rho^{n+1}\,-\,\rho^n\,,\\
\delta\! u^n & := & u^{n+1}\,-\,u^n\,, \\
\delta\!\Pi^n & := & \Pi^{n+1}\,-\,\Pi^n\,, \\
\delta\! f^n & := & f^{n+1}\,-\,f^n\,.
\end{eqnarray*}

Let us emphasize that, by assumption  and embedding, we have
\begin{itemize}
\item $a_0^n\rightarrow a_0$ in $C^{0,1},$
\item $u_0^n\rightarrow u_0$ in $L^2,$
\item $f^n\rightarrow f$ in $L^1([0,T];L^2).$ 
\end{itemize}
This will be the key to our proof of convergence. 
\medbreak
Let us first focus on $\tilde a^n.$
By construction,  $\delta\! \tilde a^n$ belongs to $\cC^1([0,T];L^2)$ and satisfies the equation
$$
\partial_t\delta\! \tilde a^n\,=\,-u^{n}\cdot\nabla\delta\! \tilde a^n\,-\,\delta\! u^{n-1}\cdot\nabla a^n
-u^n\cdot\nabla\delta\!a_0^n
$$
from which, taking the scalar product in $L^2$ with $\delta\! \tilde a^n$, we obtain
$$
\frac{1}{2}\frac{d}{dt}\left\|\delta\! \tilde a^n\right\|^2_{L^2}\,
=\,\frac{1}{2}\int\left(\delta\! \tilde a^n\right)^2\div u^n\,dx\,-\,
\int\delta\! u^{n-1}\cdot\nabla a^n\,\delta\! \tilde a^n\,dx-\int u^n\cdot\nabla\delta\!a_0^n\,\delta\!\tilde{a}^n\,dx\,.
$$
So, keeping in mind that $\delta\! \tilde a^n(0)=0$ and integrating with respect to the time variable one has
\begin{equation} \label{est:d-a^n}
 \left\|\delta\! \tilde a^n(t)\right\|_{L^2}\,\leq\,\int^t_0\biggl(\frac{1}{2}\left\|\div u^n\right\|_{L^\infty}
\left\|\delta\! \tilde a^{n}\right\|_{L^2}+
\|\nabla a^n\|_{L^\infty}\|\delta\! u^{n-1}\|_{L^2}+\|u^n\|_{L^2}\|\nabla\da_0^n\|_{L^\infty}\biggr)\,d\tau\,.
\end{equation}

Equally easily, one can see that the following equality holds true:
$$
\rho^{n+1}\left(\partial_t\delta\! u^n\,+\,u^n\cdot\nabla\delta\! u^n\right)\,+\,\nabla\delta\!\Pi^{n-1}\,=\,
\rho^{n+1}\left(\delta\!f^n-\,\delta\! u^{n-1}\cdot\nabla u^n\,-\,\delta\! a^n\,\nabla\Pi^{n-1}\right)\,;
$$
taking the scalar product in $L^2$  with $\delta\! u^n$, integrating by parts, remembering the first equation of (\ref{eq:ddeuler})
at $(n+1)$-th step, we finally get
$$\displaylines{
\left\|\sqrt{\rho^{n+1}(t)}\delta\! u^n(t)\right\|_{L^2}  \leq  \int^t_0\left\|\div u^n\right\|_{L^\infty}\,
\left\|\sqrt{\rho^{n+1}}\delta\! u^n\right\|_{L^2}\,d\tau\hfill\cr\hfill+\int^t_0\biggl(\left\|\nabla u^n\right\|_{L^\infty}
\left\|\sqrt{\rho^{n+1}}\delta\! u^{n-1}\right\|_{L^2}
+\left\|\sqrt{\rho^{n+1}}\nabla\Pi^{n-1}\right\|_{L^\infty}\left\|\delta\!\tilde a^n\right\|_{L^2}\hfill\cr\hfill+
\left\|\sqrt{\rho^{n+1}}\nabla\Pi^{n-1}\right\|_{L^2}\left\|\delta\! a^n_0\right\|_{L^\infty}+
\left\|\frac{\nabla\delta\!\Pi^{n-1}}{\sqrt{\rho^{n+1}}}\right\|_{L^2}+\sqrt{\rho^*}\|\delta\! f^n\|_{L^2}\biggr)d\tau\,.}
$$
{}From (\ref{est:d-a^n}), Gronwall's Lemma and \eqref{eq:ldv0},  we thus get
for some constant $C$ depending only on $a_*$ and $a^*,$
$$\displaylines{
 \left\|(\delta\! \tilde a^n,\delta\! u^n)(t)\right\|_{L^2} \leq C\biggl(e^{A^n(t)}\|\du_0^n\|_{L^2}
+\int^t_0e^{A^n(t)-A^n(\tau)}\biggl(\|(\nabla a^n,\nabla u^n)\|_{L^\infty}
\left\|\delta\! u^{n-1}\right\|_{L^2} \hfill\cr\hfill+
\left\|\nabla\delta\!\Pi^{n-1}\right\|_{L^2}+\left\|\nabla\Pi^{n-1}\right\|_{L^2}\left\|\delta\! a^n_0\right\|_{L^\infty}+\|u^n\|_{L^2}\|\nabla\da_0^n\|_{L^\infty}+\|\delta\!f^n\|_{L^2}\biggr)\,d\tau\,,} 
$$
where we have set
$$
A^n(t)\,:=\,\int^t_0\left(\left\|\div u^n\right\|_{L^\infty}\,+\,
\left\|\sqrt{\rho^{n+1}}\nabla\Pi^{n-1}\right\|_{L^\infty}\right)d\tau\,.
$$
Of course,  the
 uniform a priori estimates of the previous step  allow us to control the exponential term for all $t\in[0,T]$
by some constant $C_T.$ 
\smallbreak

Next, we have to deal with the term $\nabla\delta\!\Pi^{n-1}$. We notice that it satisfies the elliptic equation
$$
-\div\left(a^{n-1}\nabla\delta\!\Pi^{n-1}\right)\,=\,\div\left(-\delta\! a^{n-1}\nabla\Pi^n\,-\,u^{n-1}\cdot\nabla\mc{P}\delta\! u^{n-1}\,
-\,\delta\! u^{n-1}\cdot\nabla\mc{P}u^n\,+\,\delta\! f^{n-1}\right)\,.
$$
Then applying the following algebraic identity 
$$
\div(v\cdot\nabla w)\,=\,\div(w\cdot\nabla v)\,+\,\div(v\,\div w)\,-\,\div(w\,\div v)
$$
to $v=u^{n-1}$ and $w=\mc{P}\delta\! u^{n-1}$, and remembering that $\div\mc{P}\du^{n-1}=0$, we get
\begin{eqnarray*}
\div\left(a^{n-1}\nabla\delta\!\Pi^{n-1}\right) &  = & \div\biggl(\mc{P}\delta\! u^{n-1}\,\div u^{n-1}\,-\,
\mc{P}\delta\! u^{n-1}\cdot\nabla u^{n-1}\,-\,\delta\! u^{n-1}\cdot\nabla\mc{P}u^n \\
 & & \,-\,\delta\! a^{n-1}\nabla\Pi^n\,+\,\delta\! f^{n-1}\biggr)\,.
\end{eqnarray*}
Therefore, from Lemma \ref{l:laxmilgram} and the fact that $\|\mc{P}\|_{\mc{L}(L^2;L^2)}=1$, one immediately has the following inequality:
\begin{eqnarray} \label{est:d-Pi}
a_*\left\|\nabla\delta\!\Pi^{n-1}\right\|_{L^2} & \leq &
 \left\|\delta\! \tilde a^{n-1}\right\|_{L^2}\left\|\nabla\Pi^n\right\|_{L^\infty}\,+ \,\left\|\delta\! a^{n-1}_0\right\|_{L^\infty}\left\|\nabla\Pi^n\right\|_{L^2}\,+\,\|\delta\! f^{n-1}\|_{L^2} \\
& & +\,\left\|\delta\! u^{n-1}\right\|_{L^2}\left(\left\|\div u^{n-1}\right\|_{L^\infty}\,+\,
\left\|\nabla u^{n-1}\right\|_{L^\infty}\,+\,\left\|\nabla\mc{P}u^n\right\|_{L^\infty}\right)\,.\nonumber
\end{eqnarray}
Due to a priori estimates, we finally obtain, for all $t\in[0,T]$,
\begin{eqnarray*}
 \left\|(\delta\! \tilde a^n,\delta\! u^n)(t)\right\|_{L^2} & \leq & C_{T}\biggl(\|\du^n_0\|_{L^2}
+\int^t_0\Bigl(\left\|(\delta\! a^{n-1},\delta\! u^{n-1})\right\|_{L^2}\\&&\,+\,\left\|\nabla\delta\!\Pi^{n-1}\right\|_{L^2}+\|\da_0^n\|_{C^{0,1}}+\|\delta\!f^n\|_{L^2}\Bigr)\,d\tau\biggr) \\
 \left\|\nabla\delta\!\Pi^{n-1}\right\|_{L^2} & \leq & C_{T}\left(\left\|\delta\! \tilde a^{n-1}\right\|_{L^2}\,+\,\left\|\delta\! u^{n-1}\right\|_{L^2}\,+\,\|\da_0^{n-1}\|_{L^\infty}\,+\,\|\delta\! f^{n-1}\|_{L^2}\right)\,;
\end{eqnarray*}
so,  plugging the second inequality in the first one, we find out that for all  $t\in[0,T],$
\begin{equation} \label{est:Cauchy}
 \left\|(\delta\! \tilde a^n,\delta\! u^n)(t)\right\|_{L^2} \leq
\eps_n +C_T\int^t_0\left\|(\delta\! \tilde a^{n-1},\delta\! u^{n-1})\right\|_{L^2}\,d\tau
\end{equation}
with
$$
\eps_n:= C_T\biggl(\|\du^n_0\|_{L^2}+\int_0^T\bigl(\|\delta\!f^{n-1}\|_{L^2}+\|\delta\!f^n\|_{L^2}
+\|\da_0^{n-1}\|_{L^\infty}+\|\nabla\da_0^{n-1}\|_{L^\infty}\bigr)\,dt\biggr).
$$

Now, we have
$$
\sum_n \eps_n<\infty.
$$
{}From this and \eqref{est:Cauchy}, it is easy 
to conclude that 
$$
\sum_n\sup_{t\in[0,T]}\bigl(\|\delta\!\tilde a^n(t)\|_{L^2}
+\|\du^n(t)\|_{L^2}\bigr)<\infty.
$$
In other words,  $(\tilde a^n)_{n\in\N}$
and $(u^n)_{n\in\N}$ are 
 Cauchy sequences in $\cC([0,T];L^2)$; therefore they converge to
some functions $\wtilde{a}$, $u\,\in\cC([0,T];L^2)$. In the same way, it is  clear that $\left(\nabla\Pi^n\right)_{n\in\mbb{N}}$ converges to
some $\nabla\Pi\in\cC([0,T];L^2)$.

\smallbreak
Defining  $a\,:=\,\wtilde{a}+a_0$, it remains to show that $a$, $u$ and $\nabla\Pi$ are indeed solutions of the initial system. We already know that
$a$, $u$ and $\nabla\Pi\;\in\cC([0,T];L^2)$. In addition, 
\begin{itemize}
 \item thanks to Fatou's  property in  Besov spaces, as $\left(a^n\right)_{n\in\mbb{N}}$ is bounded in $L^\infty([0,T];B^{s}_{\infty,r})$, we obtain that
$a\in L^\infty([0,T];B^s_{\infty,r})$ and satisfies \eqref{eq:ldv0};
 \item in the same way, $u\in L^\infty([0,T];B^s_{\infty,r})$ because also $\left(u^n\right)_{n\in\mbb{N}}$ is bounded in the same space;
 \item finally, $\nabla\Pi\in L^1([0,T];B^s_{\infty,r})$ because the sequence $\left(\nabla\Pi^n\right)_{n\in\mbb{N}}$ is bounded in the same
functional space.
\end{itemize}
By interpolation we get that the sequences converge strongly to the solutions in every intermediate space between $\cC([0,T];L^2)$ and
$\cC([0,T];B^s_{\infty,r})$, that is enough to pass to the limit in the equations satisfied by $\left(a^n,u^n,\nabla\Pi^n\right)$.
So, $\left(a,u,\nabla\Pi\right)$ satisfies System \eqref{eq:ddeuler_a}.

Finally, continuity properties of the solutions with respect to the time variable can be recovered from the equations satisfied by them, using
classical results for transport equations in Besov spaces (see Proposition \ref{p:transport}).


\subsection{Uniqueness of the solution}

Uniqueness of the solution to System \eqref{eq:ddeuler} is a straightforward consequence of the following stability result, the proof
of which can be found in \cite{D2}.

\begin{prop} \label{p:uni}
 Let $(\rho_1,u_1,\nabla\Pi_1)$ and $(\rho_2,u_2,\nabla\Pi_2)$ satisfy System \eqref{eq:ddeuler} with external forces 
$f_1$ and $f_2$, respectively.
Suppose that $\rho_1$ and $\rho_2$ both satisfy \eqref{eq:ldv}.
Assume also that:
\begin{itemize}
 \item $\delta\!\rho:=\rho_2-\rho_1$ and $\delta\! u:=u_2-u_1$ both belong to $\cC^1([0,T];L^2)$,
 \item $\delta\! f:=f_2-f_1\,\in\cC([0,T];L^2)$,
 \item $\nabla\rho_1$, $\nabla u_1$ and $\nabla\Pi_1$ belong to $L^1([0,T];L^\infty)$.
\end{itemize}

Then for all $t\in[0,T]$ we have
$$
e^{-A(t)}\left(\left\|\delta\!\rho(t)\right\|_{L^2}+\left\|\left(\sqrt{\rho_2}\delta\! u\right)(t)\right\|_{L^2}\right) \leq 
\left\|\delta\!\rho(0)\right\|_{L^2}+\left\|\left(\sqrt{\rho_2}\delta\! u\right)(0)\right\|_{L^2}+ 
 \int^t_0 e^{-A(\tau)}\left\|\left(\sqrt{\rho_2}\delta\! f\right)\right\|_{L^2}\,d\tau
 $$
with
$$
A(t):=\int^t_0\left(\left\|\frac{\nabla\rho_1}{\sqrt{\rho_2}}\right\|_{L^\infty}+
\left\|\frac{\nabla\Pi_1}{\rho_1\sqrt{\rho_2}}\right\|_{L^\infty}+
\left\|\nabla u_1\right\|_{L^\infty}\right)d\tau\,.
$$
\end{prop}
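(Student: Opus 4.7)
Since Proposition \ref{p:uni} is a standard $L^2$ stability statement, the plan is to carry out an energy estimate on the difference of the two solutions, being careful about the pressure. Setting $\delta\!\rho=\rho_2-\rho_1$, $\delta\!u=u_2-u_1$, $\delta\!\Pi=\Pi_2-\Pi_1$, $\delta\!f=f_2-f_1$, I would first derive the two scalar equations
$$
\d_t\delta\!\rho+u_2\cdot\nabla\delta\!\rho=-\delta\!u\cdot\nabla\rho_1,
$$
and, dividing each momentum equation by its own density, subtracting, and then multiplying the result by $\rho_2$,
$$
\rho_2\bigl(\d_t\delta\!u+u_2\cdot\nabla\delta\!u\bigr)+\nabla\delta\!\Pi
=\rho_2\,\delta\!f-\rho_2\,\delta\!u\cdot\nabla u_1+\frac{\delta\!\rho}{\rho_1}\nabla\Pi_1.
$$
The crucial point in this rewriting is that the pressure difference appears in its pure gradient form $\nabla\delta\!\Pi$, so that the constraint $\div\delta\!u=0$ will kill it in the energy inequality.

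Next, I would take the $L^2$ inner product of the density equation with $\delta\!\rho$. Using $\div u_2=0$ and factoring $\sqrt{\rho_2}$ out of $\nabla\rho_1$ one obtains
$$
\frac{d}{dt}\|\delta\!\rho\|_{L^2}\leq \Bigl\|\frac{\nabla\rho_1}{\sqrt{\rho_2}}\Bigr\|_{L^\infty}\|\sqrt{\rho_2}\,\delta\!u\|_{L^2}.
$$
For the velocity, I would compute $\frac{d}{dt}\int\rho_2|\delta\!u|^2$ using $\d_t\rho_2+\div(\rho_2u_2)=0$ to cancel the boundary-type term, giving
$$
\frac12\frac{d}{dt}\|\sqrt{\rho_2}\,\delta\!u\|_{L^2}^2=\int\rho_2\,\delta\!u\cdot(\d_t\delta\!u+u_2\cdot\nabla\delta\!u)\,dx,
$$
and then substitute the momentum identity above. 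The $\nabla\delta\!\Pi$ term drops since $\int\delta\!u\cdot\nabla\delta\!\Pi\,dx=-\int\div\delta\!u\,\delta\!\Pi\,dx=0$, and the three remaining terms are bounded by $\|\sqrt{\rho_2}\delta\!f\|_{L^2}\|\sqrt{\rho_2}\delta\!u\|_{L^2}$, $\|\nabla u_1\|_{L^\infty}\|\sqrt{\rho_2}\delta\!u\|_{L^2}^2$ and $\|\nabla\Pi_1/(\rho_1\sqrt{\rho_2})\|_{L^\infty}\|\delta\!\rho\|_{L^2}\|\sqrt{\rho_2}\delta\!u\|_{L^2}$, respectively.

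Adding the two estimates, the quantity $\Phi(t):=\|\delta\!\rho(t)\|_{L^2}+\|(\sqrt{\rho_2}\,\delta\!u)(t)\|_{L^2}$ satisfies $\Phi'(t)\leq \|\sqrt{\rho_2}\,\delta\!f\|_{L^2}+A'(t)\,\Phi(t)$ with exactly the $A$ given in the statement, and Gronwall's lemma delivers the announced inequality. The only delicate step is the pressure term: without the preliminary multiplication by $\rho_2$ one ends up with an uncontrolled expression of the form $\nabla(\delta\!\Pi/\rho_2)$, whereas the chosen rewriting localizes all the ``bad'' coefficient on the lower-order contribution $\delta\!\rho\,\nabla\Pi_1/\rho_1$, which is precisely why the hypothesis $\nabla\Pi_1\in L^1_T(L^\infty)$ (rather than a higher Besov norm) suffices.
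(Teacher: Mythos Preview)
Your argument is correct and is exactly the standard $L^2$ energy method one expects here; the paper itself does not give a proof but refers to \cite{D2}, where the same computation is carried out. The key algebraic move you identify---dividing each momentum equation by its own density before subtracting, so that the pressure difference appears as the pure gradient $\nabla\delta\!\Pi$ and is annihilated by $\div\delta\!u=0$---is precisely the point of the proof, and your bookkeeping of the three weights $\nabla\rho_1/\sqrt{\rho_2}$, $\nabla\Pi_1/(\rho_1\sqrt{\rho_2})$, $\nabla u_1$ matches the $A(t)$ in the statement.
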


\smallbreak
\begin{proof}[Proof of uniqueness in Theorem \ref{th:L^2}]
 Let us suppose that there exist two solutions $(\rho_1,u_1,\nabla\Pi_1)$ and $(\rho_2,u_2,\nabla\Pi_2)$ to System \eqref{eq:ddeuler}
 corresponding to the same data and satisfying the hypotheses of Theorem \ref{th:L^2}.
Then, as one can easily verify,  these solutions satisfy 
the assumptions of Proposition \ref{p:uni}.
For instance, that  $\delta\!\rho\in\cC^1([0,T];L^2)$ is an immediate consequence of the fact that, for $i=1, 2$, the velocity
field $u_i$ is in $\cC([0,T];L^2)$ and $\nabla\rho_i$ is in $\cC([0,T];L^\infty)$, so that $\partial_t\rho_i\in\cC([0,T];L^2)$.

So, Proposition \ref{p:uni}  implies that $(\rho_1,u_1,\nabla\Pi_1)\equiv(\rho_2,u_2,\nabla\Pi_2)$.
\end{proof}


\subsection{Proof of the continuation criterion}

Now, we want to prove the continuation criterion  for the solution to (\ref{eq:ddeuler}). We 
proceed  in two steps. As usual, we will suppose
Condition $(C)$ to be satisfied with $p=\infty$.
The first step of the proof is given by the following lemma.

\begin{lem} \label{l:cont_1} 
 Let $(\rho,u,\nabla\Pi)$ be a solution of System \eqref{eq:ddeuler} on $[0,T^*[\times\mbb{R}^N$
such that\footnote{with the usual convention that continuity in time is weak if $r=\infty$.}
\begin{itemize}
 \item $u\in\cC([0,T^*[;B^s_{\infty,r})\cap\cC^1([0,T^*[;L^2)$,
 \item $\rho\in\cC([0,T^*[;B^s_{\infty,r})$ and satisfies \eqref{eq:ldv}.
\end{itemize}

Suppose also that Condition \eqref{eq:cont_cond} holds and that $T^*$ is finite. Then
$$
\sup_{t\in[0,T^*[}\left(\|u(t)\|_{B^s_{\infty,r}\cap L^2}\,+\,\|\rho(t)\|_{B^{s}_{\infty,r}}\right)\,+\,
\int^{T^*}_0\|\nabla\Pi\|_{B^s_{\infty,r}}\,dt\,<\,\infty\,.
$$
\end{lem}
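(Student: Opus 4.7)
\medbreak
The plan is to reproduce the a priori estimates of Subsection~3.1 on every compact subinterval $[0,T]\subset[0,T^*[$, with constants depending only on the data and on the two integrals appearing in \eqref{eq:cont_cond} (in particular, not on $T$), and then to let $T\to T^*$. I would start with the low-regularity bounds. The energy identity \eqref{eq:cons_en} combined with \eqref{eq:ldv} produces a bound on $\|u(t)\|_{L^2}$ valid on all of $[0,T^*[$. Next, for $\rho$, the transport equation $\partial_t\rho+u\cdot\nabla\rho=0$ with $\div u=0$ admits the sharp Vishik--Hmidi--Keraani estimate (see \cite{V,HK}), whose Gronwall weight is $\|\nabla u\|_{L^\infty}$ rather than $\|\nabla u\|_{B^{s-1}_{\infty,r}}$; this yields
$$
\|\rho(t)\|_{B^s_{\infty,r}}\,\leq\,C\,\|\rho_0\|_{B^s_{\infty,r}}\,\Phi\Bigl(\textstyle\int_0^t\|\nabla u\|_{L^\infty}\,d\tau\Bigr),
$$
for some increasing continuous $\Phi$, so that $\rho$ stays bounded in $B^s_{\infty,r}$ on $[0,T^*[$ by virtue of \eqref{eq:cont_cond}. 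Proposition~\ref{p:comp} transfers the bound to $a=1/\rho$.

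\medbreak
The crucial step is the pressure estimate. Starting from decomposition \eqref{eq:Pi} and the identity \eqref{eq:Lapl_Pi}, I would write
$$
\|\nabla\Pi\|_{B^s_{\infty,r}}\,\leq\,C\Bigl(\|\nabla\Pi\|_{L^2}+\|\nabla(\log a)\cdot\nabla\Pi\|_{B^{s-1}_{\infty,r}}+\|F/a\|_{B^{s-1}_{\infty,r}}\Bigr).
$$
The $L^2$ part is treated as in \eqref{est:Pi_t_L2} via Lemma~\ref{l:laxmilgram}, and $\|F/a\|_{B^{s-1}_{\infty,r}}$ is controlled as in \eqref{eq:Fa}, producing a quantity quadratic in $\|u\|_{L^2\cap B^s_{\infty,r}}$. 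For the middle term, Corollary~\ref{c:op} (or the Bony-based argument of \eqref{est:T_log-Pi}--\eqref{est:T_Pi-log} in the endpoint case $s=r=1$) yields
$$
\|\nabla(\log a)\cdot\nabla\Pi\|_{B^{s-1}_{\infty,r}}\,\leq\,C\|\nabla a\|_{B^{s-1}_{\infty,r}}\|\nabla\Pi\|_{B^{s-1}_{\infty,r}},
$$
which is \emph{already time-integrable} by \eqref{eq:cont_cond} combined with the density bound just obtained. Integrating over $[0,T]$, we thus get a bound on $\|\nabla\Pi\|_{L^1_T(B^s_{\infty,r})}$ in terms of already-controlled quantities and of $\int_0^T\|u\|^2_{L^2\cap B^s_{\infty,r}}\,d\tau$.

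\medbreak
Finally, applying the same sharp transport estimate to $\partial_tu+u\cdot\nabla u=f-a\nabla\Pi$ gives
$$
\|u(t)\|_{B^s_{\infty,r}}\,\leq\,C\,e^{C\int_0^t\|\nabla u\|_{L^\infty}\,d\tau}\Bigl(\|u_0\|_{B^s_{\infty,r}}+\int_0^t\bigl(\|f\|_{B^s_{\infty,r}}+\|a\nabla\Pi\|_{B^s_{\infty,r}}\bigr)\,d\tau\Bigr),
$$
and the source is handled by the tame estimate of Corollary~\ref{c:op}. Setting $U(t):=\|u(t)\|_{L^2\cap B^s_{\infty,r}}+\int_0^t\|\nabla\Pi\|_{B^s_{\infty,r}}\,d\tau$ and plugging in the pressure bound produces a Gronwall-type integral inequality for $U$ whose coefficients are in $L^1(0,T^*)$ by \eqref{eq:cont_cond}; a standard bootstrap then closes the argument and simultaneously yields the claimed bound on $\int_0^{T^*}\|\nabla\Pi\|_{B^s_{\infty,r}}\,dt$.

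\medbreak
The main obstacle is the circular coupling in the high-regularity estimates: the velocity bound in $B^s_{\infty,r}$ calls for the pressure bound, while the pressure bound contains $\|u\|^2_{B^s_{\infty,r}\cap L^2}$. This is only resolved because (i) the sharp transport estimate with $L^\infty$ multiplier on $\nabla u$ is available for divergence-free transport, so that \eqref{eq:cont_cond} is actually usable for both $\rho$ and $u$, and (ii) the elliptic equation produces the \emph{lower} norm $\|\nabla\Pi\|_{B^{s-1}_{\infty,r}}$ on its right-hand side, exactly matching the regularity the continuation criterion provides.
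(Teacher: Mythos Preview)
There is a genuine gap in your treatment of the density. The Vishik--Hmidi--Keraani estimate you invoke is specific to Besov spaces with \emph{null} regularity index (it says that the $B^0_{\infty,1}$-norm of a transported quantity grows at most linearly in $\int_0^t\|\nabla u\|_{L^\infty}$); it does \emph{not} give $\|\rho(t)\|_{B^s_{\infty,r}}\leq C\|\rho_0\|_{B^s_{\infty,r}}\,\Phi\bigl(\int_0^t\|\nabla u\|_{L^\infty}\bigr)$ for $s>1$. For $s>1$ the Gronwall weight in Proposition~\ref{p:transport} is genuinely $\|\nabla u\|_{B^{s-1}_{\infty,r}}$, and this cannot be relaxed: if $u$ is merely Lipschitz then so is its flow, and composition with a Lipschitz map does not propagate regularity beyond Lipschitz. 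Hence \eqref{eq:cont_cond} alone cannot control $\|\rho\|_{L^\infty_t(B^s_{\infty,r})}$ independently of $\|u\|_{L^\infty_t(B^s_{\infty,r})}$, and your sequential scheme (first $\rho$, then $\Pi$, then $u$) collapses at its first step. A related problem: your pressure bound inherits from \eqref{eq:Fa} a term quadratic in $\|u\|_{B^s_{\infty,r}\cap L^2}$, so the inequality you obtain for $U$ is of Riccati type rather than linear Gronwall; no ``standard bootstrap'' will close that on the whole of $[0,T^*[$ without further input.

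The paper's remedy is to use \emph{tame} estimates throughout and to couple $\rho$ and $u$. For the density one uses the commutator form \eqref{eq:q1},
\[
\|\rho(t)\|_{B^{s}_{\infty,r}}\leq\|\rho_0\|_{B^{s}_{\infty,r}}+C\int_0^t\bigl(\|\nabla u\|_{L^\infty}\|\rho\|_{B^s_{\infty,r}}+\|\nabla\rho\|_{L^\infty}\|u\|_{B^s_{\infty,r}}\bigr)\,d\tau,
\]
so that the $B^s$-norms of $\rho$ and $u$ must be Gronwalled \emph{together}; the extra coefficient $\|\nabla\rho\|_{L^\infty}$ is harmless thanks to the elementary Lipschitz bound \eqref{eq:Da}. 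For the pressure, in place of \eqref{eq:Fa} one uses the tame bounds $\|\div(u\cdot\nabla u)\|_{B^{s-1}_{\infty,r}}\leq C\|\nabla u\|_{L^\infty}\|u\|_{B^s_{\infty,r}}$ and \eqref{eq:dPi}, which keeps the final inequality \emph{linear} in $\|\nabla a\|_{B^{s-1}_{\infty,r}}+\|u\|_{B^s_{\infty,r}\cap L^2}$ with coefficients in $L^1(0,T^*)$. Finally, in the endpoint case $s=r=1$ even \eqref{eq:dPi} is unavailable ($B^0_{\infty,1}$ is not an algebra, so your product bound there fails too), and one is forced to pass through the vorticity equation as in Section~\ref{s:W^1,4}.
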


\begin{proof}[Proof of Lemma \ref{l:cont_1}]
It is only a matter of repeating  the a priori estimates of the previous section, but in a more accurate way. Note that $a:=1/\rho$ satisfies the same
hypothesis as $\rho$, so we will work without distinction with these two quantities, according to what is more convenient to us, and set 
  $q=\rho$ or $a.$ Recall that
$$
\d_tq+u\cdot\nabla q=0.
$$
Hence, applying operator $\Delta_j$ yields
$$
\d_t\Delta_jq+u\cdot\nabla\Delta_jq=[u\cdot\nabla,\Delta_j]q
$$
whence, for all $t\in[0,T^*[,$ 
\begin{equation}\label{eq:q}
2^{js}\|\Delta_jq(t)\|_{L^\infty}\leq2^{js}\|\Delta_jq_0\|_{L^\infty}
+\int_0^t2^{js}\|[u\cdot\nabla,\Delta_j]q\|_{L^\infty}\,d\tau.
\end{equation}
Now, Lemma 2.100 in \cite{BCD} ensures that 
$$
\Bigl\|\bigl(2^{js}\|[u\cdot\nabla,\Delta_j]q\|_{L^\infty}\bigr)_j\Bigr\|_{\ell^r}
\leq C\bigl(\|\nabla u\|_{L^\infty}\|q\|_{B^s_{\infty,r}}+\|\nabla q\|_{L^\infty}\|\nabla u\|_{B^{s-1}_{\infty,r}}\bigr).
$$
Hence, performing an $\ell^r$ summation in \eqref{eq:q}, we get
\begin{equation}\label{eq:q1}
\|q(t)\|_{B^{s}_{\infty,r}}\,\leq\,\|q_0\|_{B^{s}_{\infty,r}}\,+\,C\int_0^t\bigl(\|\nabla u\|_{L^\infty}\|q\|_{B^s_{\infty,r}}
+\|\nabla q\|_{L^\infty}\|u\|_{B^s_{\infty,r}}\bigr)\,d\tau.
\end{equation}
As regards the velocity field, we have according to \eqref{est:u_L^2}, 
$$
\|u(t)\|_{L^2} \leq  C\left(\|u_0\|_{L^2}\,+\,\int^t_0\|f\|_{L^2}\,d\tau\right)\,,
$$
and the last part of Proposition \ref{p:transport} guarantees that
$$\displaylines{
 \|u(t)\|_{B^{s}_{\infty,r}}  \leq  \exp\left(C\int^t_0\|\nabla u\|_{L^\infty}\,d\tau\right)\hfill\cr\hfill\times
\left(\|u_0\|_{B^s_{\infty,r}}\,+\,\int^t_0e^{-C\int^\tau_0\|\nabla u\|_{L^\infty}\,d\tau'}
\left(\|f\|_{B^{s}_{\infty,r}}+\|a\nabla\Pi\|_{B^s_{\infty,r}}\right)\,d\tau\right).}
$$
Bounding the last term according to Corollary \ref{c:op}, we thus get
$$\displaylines{
 \|u(t)\|_{B^{s}_{\infty,r}}  \leq  \exp\left(C\int^t_0\|\nabla u\|_{L^\infty}\,d\tau\right)\hfill\cr\hfill\times
\left(\|u_0\|_{B^s_{\infty,r}}\,+\,\int^t_0e^{-C\int^\tau_0\|\nabla u\|_{L^\infty}\,d\tau'}
\left(\|f\|_{B^{s}_{\infty,r}}+a^*\|\nabla\Pi\|_{B^s_{\infty,r}}+\|\nabla a\|_{B^{s-1}_{\infty,r}}\|\nabla\Pi\|_{L^\infty}\right)\,d\tau\right).}
$$
As regards the pressure term, we have
\begin{eqnarray*}
\|\nabla\Pi\|_{L^2} & \leq & C\left(\|f\|_{L^2}\,+\,\|u\|_{L^2}\,\|\nabla u\|_{L^\infty}\right) \\
\|\nabla\Pi\|_{B^s_{\infty,r}} & \leq & C\left(\|\nabla\Pi\|_{L^2}\,+\,
\|\nabla a\cdot\nabla\Pi\|_{B^{s-1}_{\infty,r}}\,+\,
\left\|\frac{1}{a}\,\div\left(f-u\cdot\nabla u\right)\right\|_{B^{s-1}_{\infty,r}}\right).
\end{eqnarray*}

Note that  Bony's decomposition combined with the fact that $\div u=0$ ensures that
$$
\|\div(u\cdot\nabla u)\|_{B^{s-1}_{\infty,r}}\leq C\|\nabla u\|_{L^\infty}\|u\|_{B^s_{\infty,r}}.
$$
In addition, \emph{under the assumption that $s>1$}, Corollary \ref{c:op} implies that
\begin{equation}\label{eq:dPi}
\|\nabla a\cdot\nabla\Pi\|_{B^{s-1}_{\infty,r}}\leq 
C\bigl(\|\nabla a\|_{L^\infty}\|\nabla\Pi\|_{B^{s-1}_{\infty,r}}\,+\,
\|\nabla a\|_{B^{s-1}_{\infty,r}}\|\nabla\Pi\|_{L^\infty}\bigr).
\end{equation}
So finally
$$
\displaylines{\|\nabla\Pi\|_{B^s_{\infty,r}}\leq C\biggl(\|\nabla\Pi\|_{L^2}\,+\,\|\nabla a\|_{L^\infty}\|\nabla\Pi\|_{B^{s-1}_{\infty,r}}\,+\,
\|\nabla a\|_{B^{s-1}_{\infty,r}}\|\nabla\Pi\|_{L^\infty}\hfill\cr\hfill
+\|a\|_{B^{s}_{\infty,r}}\left(\|f\|_{B^s_{\infty,r}}+\|\nabla u\|_{L^\infty}\|u\|_{B^{s}_{\infty,r}}\right)\biggr).}
$$

Putting together all these estimates and applying Gronwall's Lemma, we obtain if $s>1,$
\begin{eqnarray*}
 \|\nabla a\|_{B^{s-1}_{\infty,r}}+\|u(t)\|_{B^s_{\infty,r}\cap L^2} & \leq & \,\exp\left(C\int^t_0\|(\nabla a,\nabla u,\nabla\Pi)\|_{L^\infty}d\tau
\right)\left(\|\nabla a_0\|_{B^{s-1}_{\infty,r}}+\right. \\
& & \left.\|u_0\|_{B^s_{\infty,r}\cap L^2}+\|f\|_{B^{s}_{\infty,r}\cap L^2}+
\int^t_0\|\nabla a\|_{L^\infty}\|\nabla\Pi\|_{B^{s-1}_{\infty,r}}\,d\tau\right)\,,
\end{eqnarray*}
where the constant $C$ depends  only on $s,$ $a_*$, $a^*$ and $N$.

Now, the equation for $\nabla a$ and Gronwall inequality immediately ensure that
\begin{equation}\label{eq:Da}
\|\nabla a(t)\|_{L^\infty}\,\leq\,\|\nabla a_0\|_{L^\infty}\,\exp\left(\int^t_0\|\nabla u\|_{L^\infty}\,d\tau\right)\,,
\end{equation}
which, thanks to Hypothesis (\ref{eq:cont_cond}) implies  that $\nabla a$ is bounded in time with values in $L^\infty$.

Moreover, by hypothesis $\nabla\Pi\in L^1([0,T^*[;B^{s-1}_{\infty,r})$ and $\nabla u\in L^1([0,T^*[;L^\infty)$; at this point, keeping in mind the
embedding $B^{s-1}_{\infty,r}\hookrightarrow L^\infty$, the previous inequality gives us the thesis of the lemma in the case $s>1.$

In the endpoint case $s=r=1,$  Inequality \eqref{eq:dPi} fails.
In order to complete the proof of the lemma, we will have 
to take advantage of the vorticity equation associated to
\eqref{eq:ddeuler}. This is  postponed to the next section.
\end{proof}

The second lemma, which will enable  us to complete the proof of Theorem \ref{th:cont} reads:
\begin{lem} \label{l:cont_2}
 Let $(\rho,u,\nabla\Pi)$ be the solution of System \eqref{eq:ddeuler} such
  that\footnote{with the usual convention that continuity in time is weak if $r=\infty.$}
\begin{itemize}
 \item $\rho\in\cC([0,T^*[:B^s_{\infty,r})$ and \eqref{eq:ldv};
 \item $u\in\cC([0,T^*[:B^s_{\infty,r})\cap\cC^1([0,T^*[;L^2)$;
 \item $\nabla\Pi\in\cC([0,T^*[;L^2)\cap L^1([0,T^*[;B^s_{\infty,r}).$
\end{itemize}

Moreover, suppose that 
$$\|u\|_{L^\infty_{T^*}(B^s_{\infty,r}\cap L^2)}+\|\nabla a\|_{L^\infty_{T^*}(B^{s-1}_{\infty,r})}<\infty.$$

Then $(\rho,u,\nabla\Pi)$ can be continued beyond time $T^*$ into a solution of \eqref{eq:ddeuler} with the same regularity.
\end{lem}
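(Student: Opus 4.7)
The strategy is a bootstrap argument: apply Theorem~\ref{th:L^2} with initial time $t^*\in[0,T^*[$ chosen close enough to $T^*$, then glue the resulting local solution onto $(\rho,u,\nabla\Pi)$ by uniqueness. The crucial point is that, under the hypotheses of the lemma, the local existence time furnished by Theorem~\ref{th:L^2} can be bounded below \emph{uniformly in} $t^*$, so picking $t^*>T^*-\delta_0$ for a suitable $\delta_0>0$ yields an extension across $T^*$.

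First I would verify that the density keeps its upper and lower bounds uniformly on $[0,T^*[$. Since $u\in L^\infty_{T^*}(B^s_{\infty,r})\hookrightarrow L^\infty_{T^*}(C^{0,1})$, the ODE flow $\psi_t$ associated to $u$ is a global diffeomorphism of $\mathbb{R}^N$, and the representation $\rho(t,x)=\rho_0(\psi_t^{-1}(x))$ forces $\rho_*\leq\rho(t,\cdot)\leq\rho^*$ throughout $[0,T^*[$; consequently $1/\rho^*\leq a(t,\cdot)\leq 1/\rho_*$. Plugging the uniform bounds on $\|u\|_{B^s_{\infty,r}\cap L^2}$, $\|\nabla a\|_{B^{s-1}_{\infty,r}}$ and on $a_*,a^*$ into the a priori estimates \eqref{est:Pi_t_L2}--\eqref{est:Pi_t_B} then yields $\nabla\Pi\in L^1([0,T^*];B^s_{\infty,r})\cap L^\infty([0,T^*];L^2)$. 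In particular, for every $t^*\in[0,T^*[$, the pair $(\rho(t^*),u(t^*))$ has $B^s_{\infty,r}\cap L^2$ norm and density bounds controlled \emph{independently of} $t^*$.

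Next I would revisit the existence proof of Theorem~\ref{th:L^2}: the bootstrap closing \eqref{est:U(t)} shows that the existence time $T$ depends only on the quantities $\|\rho_0\|_{B^s_{\infty,r}}$, $\rho_*$, $\rho^*$, $\|u_0\|_{B^s_{\infty,r}\cap L^2}$, and the modulus of continuity of $t\mapsto\|f\|_{L^1([0,t];B^s_{\infty,r}\cap L^2)}$. All of these are under control in our situation uniformly in $t^*$, so there exists $\delta_0>0$ such that, starting from any $t^*\in[0,T^*[$ with data $(\rho(t^*),u(t^*))$ and external force $f(t^*+\cdot)$, Theorem~\ref{th:L^2} provides a solution $(\tilde\rho,\tilde u,\nabla\tilde\Pi)$ on $[t^*,t^*+\delta_0]$ with the stated regularity. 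Fixing $t^*\in\,]T^*-\delta_0,T^*[$, the uniqueness statement (via Proposition~\ref{p:uni}) forces $(\tilde\rho,\tilde u,\nabla\tilde\Pi)\equiv(\rho,u,\nabla\Pi)$ on $[t^*,T^*[$, so concatenation produces a solution of \eqref{eq:ddeuler} on $[0,t^*+\delta_0]\supset[0,T^*]$ with the desired regularity; an identical argument, using time-reversibility of System \eqref{eq:ddeuler}, covers negative times.

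The main obstacle is really the verification that the existence time $\delta_0$ depends only on the listed data-norms and on $(\rho_*,\rho^*)$, and not on any hidden parameter tied to $t^*$. This amounts to checking that the constant $C$ in \eqref{est:U(t)} is structural in the precise sense described above; this is visible from the chain of estimates \eqref{est:u_L^2}, \eqref{est:da_Besov}--\eqref{est:u_Besov}, \eqref{est:Pi_t_L2}--\eqref{est:Pi_t_B}, all of which involve constants depending only on $s$, $N$, $a_*$, $a^*$ and on Besov-norm bounds that are preserved along the flow.
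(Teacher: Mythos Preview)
Your proposal is correct and follows essentially the same approach as the paper: both argue that the local existence time furnished by Theorem~\ref{th:L^2} depends only on quantities that are, by hypothesis, uniformly bounded on $[0,T^*[$, then restart from a time $t^*$ close to $T^*$ and glue by uniqueness. The paper's proof is considerably terser (it simply asserts the uniform lower bound on the lifespan and picks $\tilde T=T^*-\eps/2$), whereas you spell out the verification of the density bounds, the role of Proposition~\ref{p:uni} in the gluing, and the structural dependence of the constants; none of this is a different method, just more detail.
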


\begin{proof}[Proof of Lemma \ref{l:cont_2}]
From the proof of Theorem \ref{th:L^2} we know  that there exists a time $\veps$, depending only on 
$\rho^*$, $N,$ $s,$ 
$\|u\|_{L^\infty_{T^*}(B^s_{\infty,r}\cap L^2)},$ $\|\nabla a\|_{L^\infty_{T^*}(B^{s-1}_{\infty,r})}$
and on the norm of the data
such that, for all $\wtilde{T}<T$, Euler system with data $(\rho(\wtilde{T}),u(\wtilde{T}),f(\wtilde{T}+\cdot))$ has a unique solution
until time $\veps$.

Now, taking for example $\wtilde{T}=T-\veps/2$, we thus obtain a solution, which is the continuation of the initial one, $(\rho,u,\nabla\Pi)$,
until time $T+\veps/2$.
\end{proof}

Let us complete the proof of  Theorem \ref{th:cont}. The first part
 is a straightforward consequence of these two lemmas.
Indeed: Lemma \ref{l:cont_1} ensures that
 $\|u\|_{L^\infty_{T^*}(B^s_{\infty,r}\cap L^2)}$ and $\|\nabla a\|_{L^\infty_{T^*}(B^{s-1}_{\infty,r})}$ are
finite.
As for  the last claim (the Beale-Kato-Majda type continuation criterion),  it is a classical consequence of  the well-known logarithmic interpolation inequality (see e.g. \cite{BCD})
$$
\|\nabla u\|_{L^\infty}\leq C\left(\|u\|_{L^2}\,+\,
\|\Omega\|_{L^\infty}\log\left(e+\frac{\|\Omega\|_{B^{s-1}_{\infty,r}}}{\|\Omega\|_{L^\infty}}\right)\right)\,.
$$

So Theorem \ref{th:cont} is now completely proved, up to the endpoint case $s=r=1$.\qed


\section{The vorticity equation and applications}\label{s:W^1,4}

This section is devoted to the proof of the blow-up criterion in the endpoint case 
$s=r=1,$ and of Theorem \ref{th:W^1,4}.
Both results rely on the vorticity equation associated to   System \eqref{eq:ddeuler}.
 As done in Section \ref{s:L^2}, we shall  restrict ourselves to  the evolution  for positive times and 
 make the usual convention as regards time continuity, if $r<\infty$.

\subsection{On the vorticity}

As in all this section  the vorticity will  play a fundamental role, let us spend some words about it.
Given a vector-field $u$, we set $\nabla u$ its Jacobian matrix and $^t\nabla u$ the transposed
matrix of $\nabla u.$ We define the vorticity associated to $u$ by
$$
\Omega\,:=\,\nabla u\,-\,^t\nabla u.
$$

Recall that, in dimension $N=2$, $\Omega$ can be identified with the scalar function $\omega\,=\,\partial_1u^2\,-\,\partial_2u^1$, while
for $N=3$ with the vector-field $\omega\,=\,\nabla\times u$.

\medbreak
It is obvious that, for all $q\in[1,\infty]$, if $\nabla u\in L^q$, then also $\Omega\in L^q$.
Conversely, if  $u$ is divergence-free then for all $1\leq i\leq N$ we have $\Delta u^i\,=\,\sum_{j=1}^N\partial_j\Omega_{ij}$, and so, formally,
$$
\nabla u^i\,=\,-\nabla\left(-\Delta\right)^{-1}\sum_{j=1}^N\partial_j\,\Omega_{ij}\,.
$$
As the symbol of the operator $-\d_i\left(-\Delta\right)^{-1}\d_j$ is $\sigma(\xi)=\xi_i\xi_j/|\xi|^2,$
the classical Calderon-Zygmund Theorem ensures that\footnote{This time the extreme values are not included.}
 for all $q\in\,]1,\infty[$  if
$\Omega\in L^q$, then $\nabla u\in L^q$ and
\begin{equation}\label{eq:CZ}
\|\nabla u\|_{L^q}\leq C\|\Omega\|_{L^q}.
\end{equation}

The above relation also implies that 
$$
u=\Delta_{-1}u-(\Id-\Delta_{-1})(-\Delta)^{-1}\sum_j\d_j\Omega_{ij}.
$$
Hence combining Bernstein's inequality and Proposition \ref{p:CZ}, we gather that
\begin{equation}\label{eq:3}
\|u\|_{B^1_{\infty,1}}\leq C\bigl(\|u\|_{L^q}+\|\Omega\|_{B^0_{\infty,1}}\bigr)\quad\hbox{for all }\ 
q\in[1,\infty].
\end{equation}

{}From now on, let us assume that $\Omega$ is the vorticity
associated to some solution $(\rho,u,\nabla\Pi)$ of \eqref{eq:ddeuler},
defined on $[0,T]\times\R^N.$
{}From the velocity equation, we gather 
that $\Omega$ satisfies the following transport-like equation:
\begin{equation} \label{eq:vort}
 \d_t\Omega\,+\,u\cdot\nabla\Omega\,+\,\Omega\cdot\nabla u\,+\,^t\nabla u\cdot\Omega\,+\,\nabla\left(\frac{1}{\rho}\right)\wedge\nabla\Pi\,=\,F\,
\end{equation}
where $F_{ij}:=\d_jf^i-\d_if^j$ and, for two vector fields $v$ and $w$, we have set $v\wedge w$ to be the skew-symmetric matrix with components
$$
\left(v\wedge w\right)_{ij}\,=\,v^jw^i\,-\,v^iw^j\,.
$$

Using classical $L^q$ estimates for transport equations and taking advantage of Gronwall's Lemma, from (\ref{eq:vort}) we immediately get
\begin{eqnarray}
 \|\Omega(t)\|_{L^q} & \leq & \exp\left(\int^t_0\|\nabla u\|_{L^\infty}d\tau\right) \label{est:vort} \\
& & \times\left(\|\Omega(0)\|_{L^q}\,+\,\int^t_0e^{-\int^\tau_0\|\nabla u\|_{L^\infty}d\tau'}
\biggl(\|F\|_{L^q}+\left\|\frac{1}{\rho^2}\nabla\rho\wedge\nabla\Pi\right\|_{L^q}\biggr)\,d\tau\right). \nonumber
\end{eqnarray}

Let us notice that, in the case of space dimension $N=2$, equation (\ref{eq:vort}) becomes
$$
\partial_t\omega\,+\,u\cdot\nabla\omega\,+\,\nabla\left(\frac{1}{\rho}\right)\wedge\nabla\Pi\,=\,F\,,
$$
so that one obtains the same estimate as before, but without the exponential growth:
$$
\|\omega(t)\|_{L^q}\,\leq\,\|\omega(0)\|_{L^q}\,+\,\int^t_0
\biggl(\|F\|_{L^q}+\left\|\frac{1}{\rho^2}\nabla\rho\wedge\nabla\Pi\right\|_{L^q}\biggr)\,d\tau\,.
$$
Therefore, the two-dimensional case is in a certain sense better. We shall take
advantage of that in  Section \ref{s:lifespan}. 
As concerns the results of this section, the
proof will not depend on the dimension.  So for the time being we assume that 
the dimension $N$ is any integer greater than or equal to~$2.$


\subsection{Proof of Theorem \ref{th:cont} in the limit case $s=r=1$}

We just have to modify the proof of Lemma \ref{l:cont_1}. 
{}From the vorticity equation \eqref{eq:vort} and Proposition \ref{p:transport}
(recall that $\div u=0$), we readily get 
\begin{eqnarray}\label{eq:vort0}
&&\quad\|\Omega(t)\|_{B^0_{\infty,1}}\leq \exp\biggl(C\int_0^t\|\nabla u\|_{L^\infty}\,d\tau\biggr)
\\&&\hspace{1cm}\times
\biggl(\|\Omega_0\|_{B^0_{\infty,1}}+\int_0^t\|F\|_{B^0_{\infty,1}}\,d\tau
+\int_0^t\bigl(\|\nabla a\wedge\nabla\Pi\|_{B^0_{\infty,1}}+\|\Omega\cdot\nabla u+{}^t\nabla u\cdot\Omega\|_{B^0_{\infty,1}}\bigr)\,d\tau\biggr).\nonumber
\end{eqnarray}
We claim that 
\begin{eqnarray}\label{eq:vort1}
&&\|\nabla a\wedge\nabla\Pi\|_{B^0_{\infty,1}}\leq 
C\bigl(\|\nabla a\|_{L^\infty}\|\nabla\Pi\|_{B^0_{\infty,1}}+\|\nabla\Pi\|_{L^\infty}
\|a\|_{B^1_{\infty,1}}\bigr),\\[2ex]\label{eq:vort2}
&&
\|\Omega\cdot\nabla u+{}^t\nabla u\cdot\Omega\|_{B^0_{\infty,1}}
\leq C\|\nabla u\|_{L^\infty}\|u\|_{B^1_{\infty,1}}.
\end{eqnarray}
Both inequalities rely on Bony's decomposition  \eqref{eq:bony} and algebraic cancellations.
 Indeed, we observe that
$$
\displaylines{\d_ia\,\d_j\Pi-\d_ja\,\d_i\Pi=
T_{\d_ia}\d_j\Pi-T_{\d_ja}\d_i\Pi+T_{\d_j\Pi}\d_ia-T_{\d_i\Pi}\d_ja
+\d_iR(a,\d_j\Pi)-\d_jR(a,\d_i\Pi).}
$$
Applying Proposition \ref{p:op} thus yields \eqref{eq:vort1}.
\smallbreak
Next, we notice that, as $\div u=0,$
$$\begin{array}{lll}
\bigl(\Omega\cdot\nabla u+{}^t\nabla u\cdot\Omega\bigr)_{ij}&=&\Sum_k
\bigl(\d_iu^k\d_ku^j-\d_ju^k\d_ku^i\bigr),\\[2ex]
&=&\Sum_k\Bigl(\d_k\bigl(u^j\d_iu^k\bigr)-\d_k\bigl(u^i\d_ju^k\bigr)\Bigr).
\end{array}$$
Therefore, 
$$
\displaylines{\bigl(\Omega\cdot\nabla u+{}^t\nabla u\cdot\Omega\bigr)_{ij}\hfill\cr\hfill
=\sum_k\biggl(T_{\d_iu^k}\d_ku^j-T_{\d_ju^k}\d_ku^i+
T_{\d_ku^j}\d_iu^k-T_{\d_ku^i}\d_ju^k
+\d_kR(u^j,\d_iu^k)-\d_kR(u^i,\d_ju^k)\biggr).}
$$
Hence Proposition \ref{p:op} implies \eqref{eq:vort2}. 
\medbreak
It is now easy to complete the proof of Lemma \ref{l:cont_1} in the limit case. 
Indeed, plugging \eqref{eq:vort1} and \eqref{eq:vort2} in \eqref{eq:vort0}, using
the energy inequality \eqref{est:u_L^2} and Inequality \eqref{eq:3} with $q=2,$ we easily get
$$
\displaylines{
\|u(t)\|_{B^1_{\infty,1}\cap L^2}\leq C\exp\biggl(C\int\|\nabla u\|_{L^\infty}\,d\tau\biggr)
\hfill\cr\hfill\times\biggl(\|u_0\|_{B^1_{\infty,1}\cap L^2}
+\int_0^t\|f\|_{B^1_{\infty,1}\cap L^2}\,d\tau+
\int_0^t\bigl(\|\nabla a\|_{L^\infty}\|\nabla\Pi\|_{B^0_{\infty,1}}+\|\nabla\Pi\|_{L^\infty}
\|a\|_{B^1_{\infty,1}}\bigr)\,d\tau\biggr).}
$$

Hence, adding up Inequality \eqref{eq:q1} and using Gronwall's inequality,
we end up with 
$$
X(t)\leq C\exp\biggl(C\int_0^t\|(\nabla u,\nabla a,\nabla\Pi)\|_{L^\infty}\,d\tau\biggr)
\biggl(X(0)+\int_0^t\bigl(\|f\|_{B^1_{\infty,1}\cap L^2}+
\|\nabla a\|_{L^\infty}\|\nabla\Pi\|_{B^0_{\infty,1}}\bigr)\,d\tau\biggr)
$$
with $X(t):=\|a(t)\|_{B^1_{\infty,1}}+\|u(t)\|_{B^1_{\infty,1}\cap L^2}.$
\smallbreak
Taking advantage of \eqref{eq:Da}  completes the proof of Lemma \ref{l:cont_1} in the limit case. 
\qed


\subsection{Proof of Theorem \ref{th:W^1,4}}

We first prove a priori estimates, and then we will get from them existence and uniqueness of the solution. In fact, it will turn out to be possible to apply
 Theorem \ref{th:L^2} after  performing a suitable cut-off
 on the initial velocity field and thus  to work directly on System \eqref{eq:ddeuler}, without passing through the equivalence with (\ref{eq:ddeuler_a}) or with a sequence of approximate linear systems.

\subsubsection{A priori estimates}

 As in the previous section,
remembering also Remark \ref{r:a-rho}, the following estimates  hold true:
\begin{eqnarray}
 \|\nabla\rho(t)\|_{B^{s-1}_{\infty,r}} & \leq & \|\nabla\rho_0\|_{B^{s-1}_{\infty,r}}\,
\exp\left(C\int^t_0\|u\|_{B^{s}_{\infty,r}}\,d\tau\right) \label{est:drho_B} \\
 \|u(t)\|_{B^{s}_{\infty,r}} & \leq & \exp\biggl(C\int^t_0\|u\|_{B^{s}_{\infty,r}}\,d\tau\biggr)\,\cdot \,
\biggl(\|u_0\|_{B^s_{\infty,r}}\,+  \label{est:u_B} \\
& & +\,\int^t_0e^{-C\int^\tau_0\|u\|_{B^{s}_{\infty,r}}\,d\tau'}
\|\rho\|_{B^{s}_{\infty,r}}\|\nabla\Pi\|_{B^s_{\infty,r}}\,d\tau\biggr)\,. \nonumber
\end{eqnarray}
Moreover, from the transport equation satisfied by the velocity field, we easily gather that
$$ \|u(t)\|_{L^4}  \leq  \|u_0\|_{L^4}\,+\,\int^t_0\left\|\frac{\nabla\Pi}{\rho}\right\|_{L^4}\,d\tau.
$$
Therefore, using interpolation in Lebesgue spaces and embedding (see Corollary \ref{c:embed}),
\begin{eqnarray}
 \label{est:u-L^4}
 \|u(t)\|_{L^4} & \leq & \|u_0\|_{L^4}\,+\,\frac{1}{\rho_*}\int^t_0\|\nabla\Pi\|^{\frac{1}{2}}_{L^\infty}\,\|\nabla\Pi\|^{\frac{1}{2}}_{L^2}\,d\tau \nonumber \\
& \leq & \|u_0\|_{L^4}\,+\,\frac{C}{\rho_*}\int^t_0\|\nabla\Pi\|_{B^s_{\infty,r}\cap L^2}\,d\tau. 
\end{eqnarray}
In order to bound the vorticity in $L^4$, one may use that
\begin{eqnarray*}
 \left\|\frac{1}{\rho^2}\nabla\rho\wedge\nabla\Pi\right\|_{L^4} & \leq & \frac{1}{(\rho_*)^2}\left\|\nabla\rho\wedge\nabla\Pi\right\|_{L^4} \\
& \leq & \frac{1}{(\rho_*)^2}\,\|\nabla\rho\|_{L^\infty}\,\|\nabla\Pi\|_{L^4} \\
& \leq & \frac{C}{(\rho_*)^2}\,\|\nabla\rho\|_{B^{s-1}_{\infty,r}}\,\|\nabla\Pi\|_{B^s_{\infty,r}\cap L^2}\,.
\end{eqnarray*}
{}From this and (\ref{est:vort}), we thus get
\begin{eqnarray}
 \|\Omega(t)\|_{L^4} & \leq & \exp\left(\int^t_0\|\nabla u\|_{B^{s-1}_{\infty,r}}d\tau\right)\label{est:vort_2} \\
& & \times\left(\|\Omega_0\|_{L^4}\,+\,\frac{C}{(\rho_*)^2}\,\int^t_0e^{-\int^\tau_0\|\nabla u\|_{B^{s-1}_{\infty,r}}d\tau'}
\|\nabla\rho\|_{B^{s-1}_{\infty,r}}\,\|\nabla\Pi\|_{B^s_{\infty,r}\cap L^2}\,d\tau\right)\,. \nonumber
\end{eqnarray}

Now, in order to close the estimates, we need to control the pressure term. Its Besov norm can be bounded as in Section \ref{s:L^2}, up to a change
of $\|u\|_{L^2}$ into $\|u\|_{L^4};$  indeed it is clear that  in Inequality \eqref{eq:Pu}
the $L^2$ norm of $u$ may be replaced by any $L^q$ norm with $q<\infty.$
As a consequence, combining the (modified) inequality \eqref{eq:Fa}
and \eqref{est:Pi_Bes} yields
\begin{equation}\label{est:Pi_Bes4}
\|\nabla\Pi\|_{L^1_t(B^s_{\infty,r})}  \leq  C\left(\left(1+\|\nabla a\|^\gamma_{L^\infty_t(B^{s-1}_{\infty,r})}\right)
\|\nabla\Pi\|_{L^1_t(L^2)}\,+\,\|\rho\|_{L^\infty_t(B^{s}_{\infty,r})}
\int^t_0\|u\|^2_{B^{s}_{\infty,r}\cap L^4}d\tau\right).
\end{equation}

In order to bound the $L^2$ norm of $\nabla\Pi,$ we take
 the divergence of the second equation of System \eqref{eq:ddeuler}. We obtain
$$
-\div\left(\frac{\nabla\Pi}{\rho}\right)\,=\,\div\left(u\cdot\nabla u\right)\,,
$$
from which, applying elliptic estimates of Lemma \ref{l:laxmilgram} and 
\begin{equation}\label{est:du_L^4}
\|\nabla u\|_{L^4}  \leq  C\,\|\Omega\|_{L^4}\,, 
\end{equation}
 we get
\begin{equation} \label{est:Pi_L^2}
 \frac{1}{\rho^*}\,\|\nabla\Pi\|_{L^2}\,\leq\,\|u\cdot\nabla u\|_{L^2}\,\leq\,\|u\|_{L^4}\,\|\nabla u\|_{L^4}\,\leq\,C\|u\|_{L^4}\,\|\Omega\|_{L^4}\,.
\end{equation}

We claim that putting together inequalities \eqref{est:drho_B}, \eqref{est:u_B},
\eqref{est:u-L^4}, \eqref{est:du_L^4},  \eqref{est:Pi_Bes4}, \eqref{est:vort_2} and
\eqref{est:Pi_L^2}  enables us to close  the estimates on some nontrivial 
time interval $[0,T]$ depending only on the norm of the data. 

In effect, assuming that $T$ has been chosen so that Inequality \eqref{eq:smallu}
is satisfied, we get from the above inequalities
$$\begin{array}{lll}
\|u(t)\|_{B^s_{\infty,r}}&\leq& 2\|u_0\|_{B^s_{\infty,r}}
+C_0\|\nabla\Pi\|_{L_t^1(B^s_{\infty,r})},\\[2ex]
\|\nabla\Pi\|_{L^1_t(B^s_{\infty,r})}  &\leq&  C_0\biggl(\Int_0^t\bigl(\|u\|_{L^4}\|\Omega\|_{L^4}
+\|u\|_{B^s_{\infty,r}\cap L^4}^2\,d\tau\biggr),\\[2ex]
\|u(t)\|_{L^4}&\leq& \|u_0\|_{L^4}+C_0\|\nabla\Pi\|_{L_t^1(B^s_{\infty,r})}
+C_0\Int_0^t\|u\|_{L^4}\|\Omega\|_{L^4}\,d\tau,\\[2ex]
\|\Omega(t)\|_{L^4}&\leq& 2\|\Omega_0\|_{L^4}+C_0\|\nabla\Pi\|_{L^1_t(B^s_{\infty,r})},
\end{array}
$$
where the constant $C_0$ depends on $s,$ $\rho_*,$ $\rho^*,$ $N$ and 
$\|\rho_0\|_{B^1_{\infty,1}}.$ 
\smallbreak

Therefore, applying Gronwall lemma and assuming 
that $T$ has been chosen so that (in addition to \eqref{eq:smallu}) we have
$$
\int_0^T\|u\|_{W^{1,4}}\,d\tau\leq c
$$
where $c$ is a small enough constant depending only on $C_0,$ 
it is easy to close the estimates.

\begin{rem}\label{r:W^1,4}
Exhibiting an $L^2$ estimate for $\nabla\Pi$ \emph{even though $u$ is not in $L^2$}
is the key to the proof. 
This has been obtained in \eqref{est:Pi_L^2}. 
Note however that we have some freedom there.
In fact, one may rather assume that  $u_0\in L^p$ and $\nabla u_0\in L^q$, with $p$ and $q$ in $]2,\infty[$ such that $1/p\,+\,1/q\,\geq\,1/2$ and
get a statement  similar to that of  Theorem \ref{th:W^1,4}
under these two assumptions. The details are left to the reader.
\end{rem}


\subsubsection{Existence of a solution}

We want to take  advantage of the existence theory provided by  Theorem \ref{th:L^2}.
However, under the assumptions of Theorem \ref{th:W^1,4}, the initial 
velocity does not belong to $L^2.$ To overcome this,   we shall introduce 
a sequence of truncated initial velocities.  Then 
 Theorem \ref{th:L^2} will enable us to solve System \eqref{eq:ddeuler} with
 these modified data and the previous 
 part will provide uniform estimates
 in the right functional spaces on a small enough (fixed) time interval.
 Finally, convergence will be proved by an energy method similar
 to that we used for Theorem \ref{th:L^2}.

\subsubsection*{First step: construction of the sequence of approximate solutions}

Take any $\Phi\in C^\infty_0(\mbb{R}^N_x)$ with $\Phi\equiv1$ on a neighborhood of the origin, and set $\Phi_n(x)=\Phi(x/n).$
Then let us define $u^n_0\,:=\,\Phi_n\,u_0$ for all $n\in\mbb{N}.$

Given that $u^n_0$ is continuous and compactly supported, it obviously belongs to 
$L^2.$ Of course, we still have
$u^n_0\in B^s_{\infty,r}\cap W^{1,4}\cap L^2$, so we fall back into hypothesis of Theorem \ref{th:L^2}. {}From it, we get the existence of some time $T_n$ and of a solution $(\rho^n,u^n,\nabla\Pi^n)$ to \eqref{eq:ddeuler}  with data $(\rho_0,u_0^n,0)$ 
such that  $\rho^n\in\cC([0,T_n];B^s_{\infty,r})$, $u^n\in\cC^1([0,T_n];L^2)\cap\cC([0,T_n];B^s_{\infty,r})$ and $\nabla\Pi^n\in\cC([0,T_n];L^2)\cap L^1([0,T_n];B^s_{\infty,r})$. {}From \eqref{est:du_L^4}, the vorticity equation and the velocity equation, 
it is easy to see that, in addition,  $u^n\in\cC([0,T_n];W^{1,4})$.

Finally, as the norm of  $u_0^n$ in $W^{1,4}\cap B^s_{\infty,r}$
may be bounded independently of $n,$ the a priori estimates that have been performed
in the previous paragraph ensure that one may find some positive lower bound
$T$ for $T_n$ such that $(\rho^n,u^n,\nabla\Pi^n)$ satisfies bounds
independent of $n$ on $[0,T]$ in the desired functional spaces.

\subsubsection*{Second step: convergence of the sequence}

As done in the previous section, we define $\wtilde{\rho}^n\,=\,\rho^n-\rho_0$, and then
\begin{eqnarray*}
 \delta\! \rho^n & := & \wtilde{\rho}^{n+1}\,-\,\wtilde{\rho}^n\,, \\
\delta\! u^n & := & u^{n+1}\,-\,u^n\,, \\
\delta\!\Pi^n & := & \Pi^{n+1}\,-\,\Pi^n\,.
\end{eqnarray*}

Resorting to  the same type of computations as in the previous section (it is actually easier
as, now, $\div u^n=0$ for all $n$), we can prove that
$\left(\wtilde{\rho}^n,u^n,\nabla\Pi^n\right)_{n\in\mbb{N}}$ is a Cauchy sequence in $\cC([0,T];L^2).$
Hence it converges to some
$\left(\wtilde{\rho},u,\nabla\Pi\right)$ which belongs to the same space.



Now, defining  $\rho\,:=\,\rho_0\,+\,\wtilde{\rho}$, bearing in mind
the uniform estimates of the previous step, and using the Fatou property,
we easily conclude that
\begin{itemize}
 \item $\rho\in L^\infty([0,T];B^s_{\infty,r})$ and $\rho_*\leq\rho\leq\rho^*$;
 \item $u\in L^\infty([0,T];B^s_{\infty,r})\cap L^\infty([0,T]; W^{1,4})$;
 \item $\nabla\Pi\in L^1([0,T];B^s_{\infty,r})\cap L^\infty([0,T];L^2)$.
\end{itemize}
Finally, by interpolation we can pass to the limit in the equations at step $n$, so we get that $(\rho,u,\nabla\Pi)$ satisfies (\ref{eq:ddeuler}),
while continuity in time  follows from Proposition \ref{p:transport}.\qed


\section{Remarks on the lifespan of the solution} \label{s:lifespan}

In this section, we exhibit  lower bounds  for the lifespan
of the solution to the  density dependent incompressible Euler equations. 
We first establish that, like in the homogeneous case, in any dimension,
if the initial velocity is of order $\eps$  then 
the lifespan is at least of order $\eps^{-1}$
\emph{even if the density is not a small perturbation of a positive real number}.
Next we focus on the  two-dimensional case:
 we show in the second part of this section,
 that  for small perturbations of a constant density state, 
the lifespan tends to be very large.
Therefore, for nonhomogeneous  incompressible fluids too, the
two-dimensional case is somewhat nicer than
the general situation.

\subsection{The general case}

Let $\rho_0,$ $u_0$ and $f$ satisfy 
the assumptions of Theorem \ref{th:L^2} or \ref{th:W^1,4}.   Denote
$$
\tilde u_0(x):=\eps^{-1}u_0(x)\quad\hbox{and}\quad
\tilde f(t,x):=\eps^{-2}f(\eps^{-1}t,x).
$$
It is clear that if we set
$$
(\rho,u,\nabla\Pi)(t,x)=(\tilde\rho,\eps\tilde u,\eps^2\nabla\tilde\Pi)(\eps t,x)
$$
then $(\tilde\rho,\tilde u,\nabla\tilde\Pi)$
is a solution to \eqref{eq:ddeuler} on $[T_*,T^*]$ with 
data $(\rho_0,\tilde u_0,\tilde f)$ if and only if 
 $(\rho,u,\nabla \Pi)$
is a solution to \eqref{eq:ddeuler} on $[\eps^{-1}T_*,\eps^{-1}T^*]$ with 
data $(\rho_0,u_0,f).$

Hence, putting together the results of the previous section, 
we can  conclude to the following statement.
\begin{thm}\label{th:ND} Let $(\rho_0,\tilde u_0)$ satisfy the assumptions of Theorem \ref{th:L^2} or \ref{th:W^1,4}, and $f\equiv0.$
There exists a  positive time  $T^*$ depending only on $s,$ $N,$  $\rho_*,$
$\|\rho_0\|_{B^0_{\infty,1}}$ and $\|\tilde u_0\|_{B^0_{\infty,1}}$
such that  for any $\eps>0$ the upper bound $T^*_\eps$ of the maximal interval of existence
for the solution to \eqref{eq:ddeuler} with 
 initial data $(\rho_0,\eps\tilde u_0)$ satisfies
 $$
 T^*_\eps\geq \eps^{-1}T^*.
$$
A similar result holds for the lower bound of the maximal interval of existence.
\end{thm}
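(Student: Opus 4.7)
The proof is a rescaling argument made possible by the quasi-invariance of System \eqref{eq:ddeuler} under the transformation
$$(\rho,u,\nabla\Pi)(t,x)\,=\,(\tilde\rho,\varepsilon\tilde u,\varepsilon^2\nabla\tilde\Pi)(\varepsilon t,x),$$
indicated already in the excerpt. My first step would be to verify this invariance by direct substitution into the three equations of \eqref{eq:ddeuler}: the transport equation picks up an overall factor $\varepsilon$, the momentum equation an overall factor $\varepsilon^2$ (the convective term and the time derivative both contribute $\varepsilon^2$, the pressure term carries its $\varepsilon^2$ explicitly, and the force is rescaled accordingly), and the divergence-free condition is preserved. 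Thus, when $f\equiv 0$, the rescaled triple solves \eqref{eq:ddeuler} with initial velocity $\tilde u_0$ and zero force on $[-T^*,T^*]$ if and only if $(\rho,u,\nabla\Pi)$ solves \eqref{eq:ddeuler} with initial velocity $\varepsilon\tilde u_0$ and zero force on $[-\varepsilon^{-1}T^*,\varepsilon^{-1}T^*]$.

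Next, I would apply the local existence result (Theorem \ref{th:L^2} in the finite energy case, Theorem \ref{th:W^1,4} in the $W^{1,4}$ case) to the un-tilded problem with data $(\rho_0,\tilde u_0)$, yielding a solution on some symmetric time interval $[-T^*,T^*]$. The crucial point is that $T^*$ can be chosen to depend only on $s$, $N$, $\rho_*$, $\rho^*$ and the appropriate norms of $\rho_0$ and $\tilde u_0$, but \emph{not} on $\varepsilon$: indeed, inspecting the bootstrap argument leading to \eqref{est:U(t)} (and its analogue for the $W^{1,4}$ framework), one sees that the only obstruction to continuing the solution is control of the sum $U(t)=\|u(t)\|_{L^2\cap B^s_{\infty,r}}$, and that this is secured on a time interval whose length depends solely on the stated norms of the data.

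The final step is purely bookkeeping: rescaling back gives a solution of \eqref{eq:ddeuler} with data $(\rho_0,\varepsilon\tilde u_0)$ on $[-\varepsilon^{-1}T^*,\varepsilon^{-1}T^*]$, so by definition of maximal lifespan one has $T^*_\varepsilon\ge\varepsilon^{-1}T^*$. Time-reversibility of \eqref{eq:ddeuler} (already exploited in Section \ref{s:L^2}) produces the symmetric statement for negative times.

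The only genuine subtlety — and what I view as the main obstacle — is justifying that $T^*$ depends only on the \emph{weaker} $B^0_{\infty,1}$ norms of the data, rather than on the full $B^s_{\infty,r}$ norms used in the existence proof. The route I would take is to invoke Corollary \ref{C:lifespan}, which asserts that the lifespan in $B^s_{\infty,r}$ coincides with the one in $B^1_{\infty,1}$; this reduces the problem to controlling the $B^1_{\infty,1}$ norm, which by the continuation criterion \eqref{eq:cont_cond} is in turn controlled through a Gronwall-type inequality by $\|u\|_{L^\infty}$ and $\|\nabla\Pi\|_{B^0_{\infty,1}}$. Combined with the $L^2$ energy identity and the elliptic estimate for $\nabla\Pi$ from Lemma \ref{l:laxmilgram}, one recovers a closed bootstrap involving only the $B^0_{\infty,1}$ norms of the data, as claimed.
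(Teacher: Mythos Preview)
Your rescaling argument is exactly the paper's proof: the entire argument there consists of the scaling observation in the paragraph immediately preceding the theorem, together with the sentence ``putting together the results of the previous section.'' The paper provides no further detail.

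Regarding your final paragraph: you are right to flag the dependence on the $B^0_{\infty,1}$ norms as suspicious, and in fact the paper does \emph{not} justify it --- its proof simply invokes Theorems~\ref{th:L^2} and~\ref{th:W^1,4}, whose existence times depend on the full $B^s_{\infty,r}$ norms (or, via Corollary~\ref{C:lifespan}, at best on the $B^1_{\infty,1}$ norms). Your proposed route to genuinely reach $B^0_{\infty,1}$ does not close: the continuation criterion~\eqref{eq:cont_cond} involves $\|\nabla u\|_{L^\infty}$, not $\|u\|_{L^\infty}$, and controlling $\|\nabla u\|_{L^\infty}$ forces you back to $u\in B^1_{\infty,1}$, so the bootstrap stabilizes at the $B^1_{\infty,1}$ level, not below. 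The appearance of $B^0_{\infty,1}$ in the theorem statement is almost certainly a misprint for $B^1_{\infty,1}$ (or $B^s_{\infty,r}$); your Corollary~\ref{C:lifespan} argument is the correct way to obtain the sharpest honest dependence.
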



\subsection{The two-dimensional case}

Recall that for the homogeneous
equations,  any solution corresponding to suitably smooth data
is global, a fact which relies on the conservation of the vorticity
by the flow. Now, in our case, the vorticity equation reads (if $f\equiv0$)
\begin{equation}\label{eq:2D}
\d_t\omega+u\cdot\nabla\omega+\nabla b\wedge\nabla\Pi=0
\end{equation}
with $b:=1/\rho-1$ and $\nabla b\wedge\nabla\Pi:=\d_1 b\,\d_2\Pi-\d_2b\,\d_1\Pi.$
\medbreak
Owing to the new term involving the pressure and the nonhomogeneity, 
it is not clear at all that global existence still holds. Nevertheless,
we are going to prove that the lifespan  may be very large if the nonhomogeneity is small. 
\smallbreak
To simplify the presentation, we 
 focus on the case where
$\rho_0\in B^1_{\infty,1}(\R^2)$ and $u_0\in B^1_{\infty,1}(\R^2)$
(note that Corollary \ref{C:lifespan} ensures that this is not restrictive)
and  assume, in addition, that $u_0\in H^1(\R^2)$ 
(this lower order assumption may be somewhat relaxed too).

We aim at proving the following result.

\begin{thm}\label{th:2D}  Under the above assumptions
there exists a constant $c$ such that if
$b_0:=\frac1{\rho_0}-1$ satisfies
\begin{equation}\label{eq:smallb}
\|b_0\|_{B^1_{\infty,1}}\leq c
\end{equation}
then the lifespan of the solution to the two-dimensional density dependent
incompressible Euler equations with initial data $(\rho_0,u_0)$ and 
no source term  is bounded from below by
$$
\frac c{\|u_0\|_{H^1\cap B^1_{\infty,1}}}
\log\biggl(1+\log\frac c{\|b_0\|_{B^1_{\infty,1}}}\biggr)\cdotp$$
\end{thm}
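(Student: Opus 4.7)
The plan is to bootstrap on $V(t) := \int_0^t \|\nabla u(\tau)\|_{L^\infty}\,d\tau$, imposing $V(t) \leq V_0$ for a threshold $V_0 \simeq \log(c/\|b_0\|_{B^1_{\infty,1}})$ chosen so that $b := 1/\rho - 1$, which is transported by $u$, stays small in $B^1_{\infty,1}$ throughout the interval of interest. The key ingredients are: transport of $b$ via Proposition \ref{p:transport}, namely $\|b(t)\|_{B^1_{\infty,1}} \leq \|b_0\|_{B^1_{\infty,1}} \exp(CV(t))$; an elliptic estimate for $\nabla \Pi$ in $B^0_{\infty,1}$ exploiting the smallness of $b$; the Vishik--Hmidi--Keraani transport estimate for $\omega$ in $B^0_{\infty,1}$, which is linear (not exponential) in $V(t)$; and the logarithmic interpolation inequality recalled in the proof of Theorem \ref{th:cont}.

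For the pressure, I would rewrite $-\div((1+b)\nabla\Pi) = \div(u\cdot\nabla u)$ as $\Delta \Pi = -\div(b\,\nabla\Pi) - \div(u\cdot\nabla u)$ and apply Proposition \ref{p:CZ} together with Bony's decomposition and Proposition \ref{p:op} (as in the derivation of \eqref{eq:vort1}) to absorb the $b$-term on the left, obtaining
\[
\|\nabla\Pi\|_{B^0_{\infty,1}} \,\leq\, C\bigl(\|\nabla\Pi\|_{L^2} \,+\, \|u\|_{B^1_{\infty,1}}^2\bigr),
\]
where $\|\nabla\Pi\|_{L^2} \leq C\|u\|_{H^1}^2$ by Lemma \ref{l:laxmilgram} and the 2D Sobolev embedding (this is where the $H^1$ hypothesis enters), and $\|u\|_{B^1_{\infty,1}}$ is controlled by $\|u\|_{L^2} + \|\omega\|_{B^0_{\infty,1}}$ via \eqref{eq:3}. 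The vorticity equation \eqref{eq:2D} is a transport equation with source $g := -\nabla b\wedge\nabla \Pi$; using Bony's decomposition and the skew-symmetry of the wedge product (exactly as in the proof of \eqref{eq:vort1}) one has $\|g\|_{B^0_{\infty,1}} \leq C\|b\|_{B^1_{\infty,1}}\|\nabla\Pi\|_{B^0_{\infty,1}}$, which is small under the bootstrap hypothesis.

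Standard $L^2$ and $L^\infty$ transport estimates on \eqref{eq:2D}, together with the conservation of $\|u\|_{L^2}$, yield $\|\omega(t)\|_{L^\infty\cap L^2} \leq 2\|\omega_0\|_{L^\infty\cap L^2}$ on $[0,T]$ for $\eta_0$ sufficiently small, while the Vishik--Hmidi--Keraani estimate gives
\[
\|\omega(t)\|_{B^0_{\infty,1}} \,\leq\, C\Bigl(\|\omega_0\|_{B^0_{\infty,1}} + \int_0^t \|g(\tau)\|_{B^0_{\infty,1}}\,d\tau\Bigr)(1 + V(t));
\]
a continuity argument absorbing the source term then produces $\|\omega(t)\|_{B^0_{\infty,1}} \leq C_1(1+V(t))$ on $[0,T]$, with $C_1$ depending only on $\|u_0\|_{H^1\cap B^1_{\infty,1}}$. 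Inserting this into the logarithmic interpolation inequality
\[
\|\nabla u\|_{L^\infty} \,\leq\, C\|u\|_{L^2} \,+\, C\|\omega\|_{L^\infty}\log\Bigl(e \,+\, \frac{\|\omega\|_{B^0_{\infty,1}}}{\|\omega\|_{L^\infty}}\Bigr)
\]
yields $\|\nabla u(\tau)\|_{L^\infty} \leq C_2\|u_0\|_{H^1\cap B^1_{\infty,1}}\bigl(1+\log(1+V(\tau))\bigr)$, and a direct ODE comparison for $V$ closes the bootstrap on a time interval of the announced length; the continuation criterion of Corollary \ref{C:lifespan} then completes the argument.

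The main obstacle is keeping the coupled system $(b, \omega, \nabla\Pi)$ under simultaneous control: smallness of $b_0$ must propagate well enough for the pressure equation to behave like a constant-coefficient Laplace equation and, at the same time, for the source $\nabla b\wedge\nabla\Pi$ in the vorticity equation to stay negligible. Once this is secured by the bootstrap, the Vishik--Hmidi--Keraani linear-in-$V$ growth replaces the usual exponential one, and the double-logarithmic lifespan is what emerges from the final ODE comparison for $V$.
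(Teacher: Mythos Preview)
Your overall scheme---bootstrap on $V(t)=\int_0^t\|\nabla u\|_{L^\infty}\,d\tau$, propagation of $\|b\|_{B^1_{\infty,1}}$ via Proposition~\ref{p:transport}, the Vishik--Hmidi--Keraani estimate for $\omega$ in $B^0_{\infty,1}$, and a perturbative treatment of the pressure---is sound and essentially coincides with the paper's. Your pressure bound $\|\nabla\Pi\|_{B^0_{\infty,1}}\leq C(\|\nabla\Pi\|_{L^2}+\|u\|_{B^1_{\infty,1}}^2)$ is in fact cleaner than the route taken in the paper (which passes through $H^1$ and $B^1_{\infty,1}$ and then uses a logarithmic interpolation on $\nabla\Pi$); the direct absorption you propose works, since $\|b\,\nabla\Pi\|_{B^0_{\infty,1}}\leq C\|b\|_{B^1_{\infty,1}}\|\nabla\Pi\|_{B^0_{\infty,1}}$ by Bony's decomposition.

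There is, however, a genuine gap in the final step. The logarithmic inequality you invoke,
\[
\|\nabla u\|_{L^\infty}\leq C\|u\|_{L^2}+C\|\omega\|_{L^\infty}\log\!\Bigl(e+\frac{\|\omega\|_{B^0_{\infty,1}}}{\|\omega\|_{L^\infty}}\Bigr),
\]
is \emph{not} valid: the version recalled at the end of the proof of Theorem~\ref{th:cont} requires $\|\Omega\|_{B^{s-1}_{\infty,r}}$ with $s>1$, i.e.\ some \emph{positive} regularity on the vorticity, so that the high-frequency tail $\sum_{j\geq N}\|\Delta_j\omega\|_{L^\infty}$ decays geometrically. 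With only $\omega\in B^0_{\infty,1}$ there is no such decay, and all one has is the straightforward bound $\|\nabla u\|_{L^\infty}\leq C(\|u\|_{L^2}+\|\omega\|_{B^0_{\infty,1}})$. The paper's closing remark makes exactly this point: the BKM route needs $\omega_0\in C^r$ for some $r>0$, and even then does not improve the lifespan.

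The fix is immediate. Drop the logarithmic inequality and use instead $\|\nabla u\|_{L^\infty}\leq C(\|u\|_{L^2}+\|\omega\|_{B^0_{\infty,1}})\leq C\varepsilon(1+V(t))$ under your bootstrap. This yields $V'(t)\leq C\varepsilon(1+V(t))$, hence $1+V(t)\leq e^{C\varepsilon t}$; the constraint $V(t)\leq V_0$ with $V_0\sim\log(c/\|b_0\|_{B^1_{\infty,1}})$ then gives $\varepsilon t\lesssim\log(1+V_0)$, i.e.\ precisely the double-logarithmic lifespan claimed. (One checks separately that the other bootstrap conditions---smallness of $\int_0^t\|g\|$ so that the source in the vorticity equation is absorbable---impose a weaker constraint for small $\|b_0\|_{B^1_{\infty,1}}$.) This is exactly how the paper closes the argument, via a Gronwall estimate on $X(t)=\|u(t)\|_{H^1\cap B^1_{\infty,1}}$ rather than on $V$ directly.
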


\begin{proof}
Let $]T_*,T^*[$ denote the maximal interval of existence of the solution
$(\rho,u,\nabla\Pi)$  corresponding to $(\rho_0,u_0).$ 
To simplify the presentation, we focus on the evolution for \emph{positive} times.

The key to the proof relies on the fact that in the two-dimensional case, 
the vorticity equation satisfies \eqref{eq:2D}. 
Now, it turns out that, as discovered by M. Vishik in \cite{V}
and by T. Hmidi and S. Keraani in \cite{HK}, the 
norms in Besov spaces \emph{with null regularity index} of solutions
to transport equations satisfy better estimates, namely in our case
$$
\|\omega(t)\|_{B^0_{\infty,1}}
\leq C\biggl( \|\omega_0\|_{B^0_{\infty,1}}+\int_0^t\|\nabla b\wedge\nabla\Pi\|_{B^0_{\infty,1}}\,d\tau\biggr)
\biggl(1+\int_0^t\|\nabla u\|_{L^\infty}\,d\tau\biggr)
$$
whereas, according to Proposition \ref{p:transport},  the last term has to be replaced with
$\exp\Bigl(\Int_0^t\|\nabla u\|_{L^\infty}\,d\tau\Bigr)$ for nonzero regularity exponents. 
\smallbreak

Therefore, using Inequality \eqref{eq:vort1}, we get
\begin{equation}\label{eq:1}
\|\omega(t)\|_{B^0_{\infty,1}}
\leq C\biggl( \|\omega_0\|_{B^0_{\infty,1}}+\int_0^t\|b\|_{B^1_{\infty,1}}\|\nabla\Pi\|_{B^0_{\infty,1}}\,d\tau\biggr)
\biggl(1+\int_0^t\|\nabla u\|_{L^\infty}\,d\tau\biggr)
\end{equation}

Of course, a basic energy argument leads to 
\begin{equation}\label{eq:2}
\|\omega(t)\|_{L^2}\leq\|\omega_0\|_{L^2}+\int_0^t\|\nabla b\|_{L^\infty}\|\nabla\Pi\|_{L^2}\,d\tau
\end{equation}
and it is well-known that for two-dimensional divergence-free vector fields, we have
$$
\|\nabla u\|_{L^2}=\|\omega\|_{L^2}.
$$
Therefore  putting together Inequalities \eqref{eq:1} and \eqref{eq:2} and bearing 
in mind Inequality \eqref{eq:3} and the energy inequality for $u,$ we get
\begin{equation}\label{eq:4}
X(t)\leq C\biggl(X_0+\int_0^t B\,\|\nabla\Pi\|_{B^0_{\infty,1}\cap L^2}\,d\tau\biggr)
\biggl(1+\int_0^tX\,d\tau\biggr)
\end{equation}
with 
$$
X(t):=\|u(t)\|_{H^1\cap B^1_{\infty,1}}\quad\hbox{and}\quad
B(t):=\|b(t)\|_{B^1_{\infty,1}}.
$$
Bounding $B$ is easy given  that 
$$
\d_tb+u\cdot\nabla b=0.
$$
Indeed, Inequality \eqref{sanspertes1}
ensures that
$$
\|b(t)\|_{B^1_{\infty,1}}\leq \|b_0\|_{B^1_{\infty,1}}                
\exp\biggl(C\int_0^t\|\nabla u\|_{B^0_{\infty,1}}\,d\tau\biggr).
$$
Therefore, 
\begin{equation}\label{eq:5}
B(t) \leq B_0             
\exp\biggl(C\int_0^tX\,d\tau\biggr).  
\end{equation}

Bounding the pressure term in $B^0_{\infty,1}\cap L^2$ is our next task. 
For that, recall that, as
$$
\div(a\nabla\Pi)=-\div(u\cdot\nabla u),
$$
Lemma \ref{l:laxmilgram} guarantees that
\begin{equation}\label{eq:6}
a_*\|\nabla\Pi\|_{L^2}\leq \|u\|_{L^2}\|\nabla u\|_{L^\infty}.
\end{equation}
Next, differentiating once the pressure  equation and applying again an energy method yields
\begin{equation}\label{eq:7}
a_*\|\nabla^2\Pi\|_{L^2}\leq \|\nabla u\|_{L^2}\|\nabla u\|_{L^\infty}
+\|\nabla a\|_{L^\infty}\|\nabla\Pi\|_{L^2}.
\end{equation}
Therefore, combining \eqref{eq:6} and \eqref{eq:7}  and using elementary embedding, we get
\begin{equation}\label{eq:8}
\|\nabla\Pi\|_{H^1}\,\leq\,C\,\|a\|_{B^1_{\infty,1}}\,X^2.
\end{equation}
Note that $\|a\|_{B^1_{\infty,1}}$ and $A:=1+B$  are of the same order. 
This will be important in the sequel. 
\smallbreak
In order to bound the pressure term in $B^0_{\infty,1}$ we shall use 
the following classical logarithmic interpolation inequality (see e.g. \cite{BCD}, Chap. 2):
\begin{equation}\label{eq:9}
\|\nabla\Pi\|_{B^0_{\infty,1}}\leq C\|\nabla\Pi\|_{H^1}\log\biggl(e+\frac{\|\nabla\Pi\|_{B^1_{\infty,1}}}{\|\nabla\Pi\|_{H^1}}\biggr).
\end{equation}

In order to estimate $\|\nabla\Pi\|_{B^1_{\infty,1}},$ we use the identity 
$$
\nabla\Pi=\Delta_{-1}\nabla\Pi+\cA(D)\div(u\cdot\nabla u)+\cA(D)\div(b\nabla\Pi).
$$
 with $\cA(D):=(-\Delta)^{-1}\nabla({\rm Id}-\Delta_{-1}).$
 \smallbreak
 On the one hand, combining Bony's decomposition 
 with the fact that  $\div(u\cdot\nabla u)=\nabla u:\nabla u,$
it is easy to show that 
$$
\|\div(u\cdot\nabla u)\|_{B^0_{\infty,1}}\leq 
\|u\|_{B^1_{\infty,1}}^2.
$$
On the other hand, Proposition \ref{p:op} guarantees that
$$
\|b\nabla\Pi\|_{B^1_{\infty,1}}\leq C\Bigl(\|b\|_{L^\infty}\|\nabla\Pi\|_{B^1_{\infty,1}}
+\|\nabla\Pi\|_{L^\infty}\|b\|_{B^1_{\infty,1}}\Bigr).
$$

So using the fact that $\cA(D)$ (resp. $\cA(D)\div$)  is a multiplier of degree $-1$ (resp. $0$)
away from the origin, we get from Proposition \ref{p:CZ}, 
$$
\|\nabla\Pi\|_{B^1_{\infty,1}}\leq C\Bigl(\|\nabla\Pi\|_{L^2}+ \|u\|_{B^1_{\infty,1}}^2+
\|\nabla\Pi\|_{L^\infty}\|b\|_{B^1_{\infty,1}}+\|b\|_{L^\infty}\|\nabla\Pi\|_{B^1_{\infty,1}}\Bigr).
$$
Note that $\|b(t)\|_{L^\infty}$ is time independent and that $B^1_{\infty,1}\hookrightarrow L^\infty.$ 
Hence, under assumption \eqref{eq:smallb} with $c$ small enough, the last
term may be absorbed by the left-hand side. As regards the last but one term, 
we use the following interpolation inequality:
$$
\|\nabla\Pi\|_{L^\infty}\leq C\|\nabla\Pi\|_{L^2}^{\frac12}\|\nabla\Pi\|_{B^1_{\infty,1}}^{\frac12}.
$$
Combining with Young's inequality, we thus conclude that, 
under assumption \eqref{eq:smallb}, we have
$$
\|\nabla\Pi\|_{B^1_{\infty,1}}\leq C\Bigl(\|u\|_{B^1_{\infty,1}}^2+(1+\|b\|_{B^1_{\infty,1}}^2)
\|\nabla\Pi\|_{L^2}\Bigr).
$$
Bounding the last term according to  \eqref{eq:6}, we thus end up with
$$
\|\nabla\Pi\|_{B^1_{\infty,1}}\leq CA^2X^2.
$$

Inserting this inequality in \eqref{eq:9} and using also \eqref{eq:8}, one may now conclude
that
\begin{equation}\label{eq:10}
\|\nabla\Pi\|_{B^0_{\infty,1}\cap L^2}\leq 
CAX^2\log(e+B).
\end{equation}

It is now time to insert Inequalities \eqref{eq:5} and \eqref{eq:10} in \eqref{eq:4}; we get
\begin{equation}\label{eq:11}
X(t)\leq C\biggl(X_0+B_0A_0\log(e+B_0)
\int_0^t e^{C\int_0^\tau X\,d\tau'}X^2\,d\tau\biggr)
\biggl(1+\int_0^tX\,d\tau\biggr).
\end{equation}
Let $T_0$ denote the supremum of times $t\in[0,T^*[$ so that 
\begin{equation}\label{eq:12}
B_0A_0\log(e+B_0)\int_0^t e^{C\int_0^\tau X\,d\tau'}X^2\,d\tau\leq X_0.
\end{equation}
 {}From \eqref{eq:11} and Gronwall's Lemma, we gather that 
 $$
 X(t)\leq 2CX_0e^{2CtX_0}
 \quad\hbox{for all}\quad
 t\in[0,T_0[.
 $$
 Note that this inequality implies that for all $t\in[0,T_0[,$ we have
 $$
 \int_0^t e^{C\int_0^\tau X\,d\tau'}X^2\,d\tau\leq 
 CX_0\biggl(e^{4CtX_0}-1\biggr)\exp\biggl(C\biggl(e^{2CtX_0}-1\biggr)\biggr).  
 $$
 Therefore, using \eqref{eq:12} and a bootstrap argument (based on the continuation 
 theorems that we proved in the previous sections), it is easy to show that $T_0$ is greater than 
 any time $t$ such that 
 $$
A_0B_0\log(e+B_0) \biggl(e^{4CtX_0}-1\biggr)\exp\biggl(C\biggl(e^{2CtX_0}-1\biggr)\biggr)\leq 1.         
 $$
 Taking the logarithm and using that $\log y\leq y-1$ for $y>0,$ we see that 
 if $B_0$ is small enough (an assumption which implies in particular that
$A_0\log(e+B_0)$ is of order $1$) 
  then  the above inequality is satisfied
 whenever
 $$
 e^{2CtX_0}-1\leq\frac1{C+2}\log\biggl(\frac1{2CB_0}\biggr).
 $$
 This completes the proof of the lower bound for $T^*.$
 \end{proof}

\begin{rem} If $\omega_0$ has more regularity (say $\omega_0\in C^r$ for some $r\in(0,1)$)
then one may first write an estimate for $\|\omega\|_{L^\infty}$
and next use the classical logarithmic inequality for bounding $\|\nabla u\|_{L^\infty}$
in terms of $\|\omega\|_{L^\infty}$ and $\|\omega\|_{C^r}.$
The proof is longer, requires more regularity and, at the same time, the lower
bound for the lifespan does not improve.
\end{rem}

\end{document}